\documentclass[11pt]{amsart}

\usepackage{upgreek}

\usepackage{ytableau}

\usepackage{braket}

\usepackage{amsmath, amsxtra, amsthm, amssymb,mathtools,bm,amsfonts}
\usepackage{mathabx}
\usepackage[normalem]{ulem}
\usepackage[mathscr]{euscript}
\usepackage{graphicx}
\usepackage{url}
\usepackage{color}
\usepackage{bbm}
\newcommand{\stkout}[1]{\ifmmode\text{\sout{\ensuremath{#1}}}\else\sout{#1}\fi}
\usepackage{tikz-cd}
\usepackage{tikz}
\usetikzlibrary{arrows.meta}

\usepackage[T1]{fontenc}
\graphicspath{/Figures/}

\usepackage{enumitem}
\usepackage{comment}
\usepackage[pdftex,hidelinks,backref=page]{hyperref}
\hypersetup{
    colorlinks,
    citecolor=magenta,
    filecolor=magenta,
    linkcolor=blue,
    urlcolor=black
}
\usepackage{cleveref}

\usepackage{xcolor}
\usepackage[colorinlistoftodos]{todonotes}
\renewcommand{\emptyset}{\varnothing}

\newcommand{\CC}{\mathbb C}
\newcommand{\PP}{\mathbb P}
\newcommand{\ZZ}{\mathbb Z}

\theoremstyle{definition}
\newtheorem{thm}{Theorem}[section]
\newtheorem{cor}[thm]{Corollary}
\newtheorem{lem}[thm]{Lemma}

\newtheorem{prop}[thm]{Proposition}
\newtheorem{defn}[thm]{Definition}

\newtheorem{eg}[thm]{Example}
\newtheorem{rem}[thm]{Remark}

\newtheorem{question}[thm]{Question}
\newtheorem{maintheorem}{Theorem}	
\newtheorem{fact}[thm]{Fact}

\numberwithin{equation}{section}
\allowdisplaybreaks

\newcommand{\ar}[1]
{{\xrightarrow{#1}}}

\newcommand{\indexedforests}{\operatorname{\mathsf{Forest}}}

\newcommand{\qsym}[2][]{
{\ifx&#1&%
  {\operatorname{QSym}_{#2}}
\else
  {{}^{#1}\!\operatorname{QSym}_{#2}}
\fi}
} 

\newcommand{\eqsym}[2][]{
{\ifx&#1&%
  {\operatorname{EQSym}_{#2}}
\else
  {{}^{#1}\!\operatorname{EQSym}_{#2}}
\fi}
} 

\newcommand{\qseq}[2][]{
{\ifx&#1&%
  {\operatorname{QSeq}_{#2}}
\else
  {{}^{#1}\!\operatorname{QSeq}_{#2}}
\fi}
}

\newcommand{\qsymide}[2][]{
{\ifx&#1&%
  {\operatorname{QSym}_{#2}^+}
\else
  {{}^{#1}\!\operatorname{QSym}_{#2}^+}
\fi}
} 

\newcommand{\eqsymide}[2][]{
{\ifx&#1&%
  {\operatorname{EQSym}_{#2}^+}
\else
  {{}^{#1}\!\operatorname{EQSym}_{#2}^+}
\fi}
} 

\newcommand{\sym}[1]{\operatorname{Sym}_{#1}} 
\newcommand{\compatible}[2][]{
{\ifx&#1&%
  {\mathcal{C}(#2)}
\else
  {\mathcal{C}^{m}(#2)}
\fi}
} 
\newcommand{\internal}[1]{\operatorname{IN}(#1)} 
\newcommand{\suchthat}{\;|\;}

\newcommand{\grass}[1]{\operatorname{Grass}_{#1}} 
\newcommand{\qgrass}[1]{\operatorname{QGrass}_{#1}} 

\newcommand{\des}[1]{\operatorname{Des}(#1)} 
\newcommand{\desnc}[1]{\operatorname{Des}_{\operatorname{NC}}(#1)} 

\date{}

\newcommand{\idem}{\operatorname{id}} 
\newcommand{\rope}[1]{\mathsf{R}_{#1}} 

\newcommand{\fl}[1]{\mathrm{Fl}_{#1}}
\newcommand{\GL}{\operatorname{GL}}

\definecolor{ao}{rgb}{0.0, 0.5, 0.0}

\newcommand{\hhmp}{\mathrm{QFl}}

\newcommand{\wt}[1]{\widetilde{#1}} 

\newcommand{\tl}{\textbf{t}}
\newcommand{\xl}{\textbf{x}}

\newcommand{\NC}{\operatorname{NC}}
\newcommand{\ForToNC}{\operatorname{ForToNC}}

\newcommand{\inv}[1]{\operatorname{Inv}(#1)}
\newcommand{\invnc}[1]{\operatorname{Inv}_{\NC}(#1)} 

\renewcommand\emph[1]{\textcolor{blue}{\textit{#1}}} 

\newcommand{\OHL}
{\operatorname{OHL}}

\newcommand{\pluckervanishing}[1]{\operatorname{PV}_{#1}}

\newcommand{\invq}[1]{\operatorname{Inv}^Q(#1)}
\newcommand{\Gr}{\operatorname{Gr}}
\newcommand{\QGr}
{\operatorname{QGr}}
\newcommand{\Zigzag}
{\operatorname{Zigzag}}

\newcommand{\Comp}
{\operatorname{Comp}}
\newcommand{\Part}
{\operatorname{Part}}

\definecolor{col1}{RGB}{210,90,90}
\definecolor{col2}{RGB}{90,190,90}
\definecolor{col3}{RGB}{90,110,210}

\title{The Quasisymmetric Grassmannian}

\author{Nantel Bergeron}
\address{Dept. of Math. and Stat., York University, Toronto, ON M3J 1P3, Canada}
\email{\href{mailto:bergeron@yorku.ca}{bergeron@yorku.ca}}

\author{Lucas Gagnon}
\address{Department of Mathematics,  University of Southern California, Los Angeles, CA 90089, USA}
\email{\href{mailto:lgagnon@usc.edu}{lgagnon@usc.edu}}

\author{Hunter Spink}
\address{Department of Mathematics,
University of Toronto, Toronto, ON M5S 2E4, Canada}
\email{\href{mailto:hunter.spink@utoronto.ca}{hunter.spink@utoronto.ca}}

\author{Vasu Tewari}
\address{Department of Mathematical and Computational Sciences, University of Toronto Mississauga, Mississauga, ON L5L 1C6, Canada}
\email{\href{mailto:vasu.tewari@utoronto.ca}{vasu.tewari@utoronto.ca}}
\thanks{
NB was supported by the Natural Sciences and Engineering Research Council of Canada (NSERC) and York Research Chair in Applied Algebra.
HS and VT acknowledge the support of the NSERC, respectively [RGPIN-2024-04181] and [RGPIN-2024-05433].}

\begin{document}

\begin{abstract}
We construct a complex of toric varieties we call the quasisymmetric Grassmannian inside the Grassmannian of $r$-planes in $\mathbb{C}^n$. Each irreducible component is a positroid variety and an $S_n$ translate of a toric Richardson variety of ribbon shape. We describe it as the vanishing locus of equations  $\Delta_A\Delta_{A'}=0$ in  Pl\"ucker coordinates determined by a new noncrossing combinatorial object we call the quasisymmetric Johnson graph. We give an affine paving, and show that its cohomology ring is a quasisymmetric modification of the Borel presentation of the Grassmannian's cohomology, with fundamental quasisymmetric polynomials playing the role of Schur polynomials.
\end{abstract} 

\maketitle

\section{Introduction}

There is a rich connection between symmetric polynomials and the Grassmanian manifold $\Gr(r; n)$ of $r$-planes in $\mathbb{C}^n$. 
Let $\mathrm{Part}_{r, n}$ be the set of integer partitions contained in an $r \times (n - r)$ rectangle. 
Every $\Gr(r; n)$ is stratified by Schubert cells $\{\mathring{X}^{\lambda} \suchthat \lambda \in \mathrm{Part}_{r, n}\}$, and the Kronecker dual of the homology basis of Schubert cycles $X^{\lambda} \coloneqq \overline{\mathring{X}^{\lambda} }$ is the cohomology basis of Schur symmetric polynomials $s_\lambda(x_1,\ldots,x_r)\in H^\bullet(\Gr(r;n))$ in the negative Chern roots $x_{1}, \ldots, x_{r}$ of the tautological subbundle.
This fact is intimately tied to two equivalent and classical cohomology presentations: 
\begin{align*}
H^{\bullet}(\Gr(r; n)) 
\cong \frac{\mathrm{Sym}_{r}}{\langle s_\lambda(x_1,\ldots,x_r)\suchthat \lambda \not\in \Part_{r,n}\rangle} 
\cong \frac{\mathrm{Sym}_{r} \otimes \mathrm{Sym}_{n-r}}{\langle f - f(0) \suchthat f \in \mathrm{Sym}_{n} \rangle}
\end{align*}
where $\mathrm{Sym}_{k}$ is the ring of symmetric polynomials in $k$ variables. In fact, the ideal in the first presentation has a free $\mathbb{Z}$-basis $\{s_{\lambda}(x_1,\ldots,x_r) \suchthat \lambda \notin  \mathrm{Part}_{r, n}\}$.

Our aim is to give a parallel construction adapted to the {quasisymmetric polynomials}, the \emph{quasisymmetric Grassmannian}. This will be an equidimensional algebraic variety $\QGr(r;n)\subset \Gr(r;n)$ composed of $\binom{n-2}{r-1}$ many $(n-1)$-dimensional torus-orbit closures.

For an integer composition $\alpha = (\alpha_{1}, \ldots, \alpha_{\ell})$ of $m$, the \emph{fundamental quasisymmetric polynomial} is
\[
F_{\alpha}(x_{1}, \ldots, x_{r}) = \hspace{-1.5em} \sum_{\substack{ 1 \le i_{1} \le \cdots \le i_{m} \le r \\ \text{$i_{k} < i_{k+1}$ if $k \in \mathrm{Des}(\alpha)$}}} \hspace{-1.5em} x_{i_{1}} x_{i_{2}} \cdots x_{i_{m}}
\qquad\text{where $\textstyle \mathrm{Des}(\alpha) = \{\sum_{i = 1}^{k} \alpha_{i} \suchthat 1 \le k < \ell \}$}.
\]
Stanley~\cite{StThesis} identified a natural decomposition of Schur polynomials into fundamental quasisymmetric polynomials, and Gessel~\cite{Ges84} observed that the $\{F_{\alpha}(x_1,\ldots,x_r)\suchthat \ell(\alpha)\le r\}$ span a ring containing $\mathrm{Sym}_{r}$, the \emph{ring of quasisymmetric polynomials $\qsym{r}$}.  In order to state the main result, let $\mathrm{Comp}_{r, n}$ be the set of compositions whose ribbon diagram is contained in an $r \times (n-r)$ rectangle.  

\begin{maintheorem}[Theorems~\ref{thm:QuotientPresentation} and~\ref{thm:Qsymtensor}]
\label{introthm1}
The variety $\QGr(r; n) \subseteq \Gr(r; n)$ has
\[
H^{\bullet}(\QGr(r; n)) 
\cong \frac{\mathrm{QSym}_{r}}{\langle F_\alpha(x_1,\ldots,x_r)\suchthat \alpha \not\in \Comp_{r,n}\rangle} 
\cong \frac{\mathrm{QSym}_{r} \otimes \mathrm{QSym}_{n-r}}{\langle f - f(0) \suchthat f \in \mathrm{QSym}_{n} \rangle}.
\]
In fact, the ideal in the first presentation has a free $\mathbb{Z}$-basis $\{F_{\alpha}(x_1,\ldots,x_r) \suchthat \alpha \notin  \mathrm{Comp}_{r, n}\}$.
\end{maintheorem}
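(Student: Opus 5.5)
The plan is to compute $H^\bullet(\QGr(r;n))$ geometrically first, using the affine paving, and only then match it to the two algebraic presentations. The paving is by the cells $\QGr(r;n)\cap\mathring{X}^{\lambda}$. These intersections should be nonempty exactly when $\lambda$ is the Young diagram swept out by a ribbon in $\Comp_{r,n}$, and each such intersection should be an affine space. Granting this, $H^\bullet(\QGr(r;n))$ is free over $\mathbb{Z}$ and concentrated in even degrees. Its rank is $|\Comp_{r,n}|=2^{n-1}$ restricted to ribbons in the $r\times(n-r)$ box, and it has a basis Kronecker dual to the cell closures, which are positroid varieties (toric Richardsons of ribbon shape).

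The restriction map $H^\bullet(\Gr(r;n))\to H^\bullet(\QGr(r;n))$ will not be surjective, since the rank of the target is not that of a quotient of $\Part_{r,n}$. So instead of restricting, I will build the map directly. Using the Chern roots $x_1,\dots,x_r$ of the tautological bundle restricted to $\QGr$, together with the $T$-equivariant structure, I will define
\[
\varphi:\mathrm{QSym}_r\to H^\bullet(\QGr(r;n)).
\]
The natural way to do this is through equivariant localization. The fixed points are the coordinate $r$-planes $e_A$, and the one-dimensional $T$-orbits form the edges of the quasisymmetric Johnson graph (a subgraph of the Johnson graph). A GKM-type description then says the cohomology consists of tuples $(f_A)$ satisfying divisibility conditions along those edges. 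To define $\varphi(f)$ for a quasisymmetric $f$, I will set $f_A=f(t_A)$, where $t_A$ lists the weights indexed by $A$ in increasing order. The key claim is that quasisymmetry, meaning invariance under order-preserving shifts of the variables, is exactly the condition needed for these values to agree along edges of the quasisymmetric Johnson graph modulo the edge weight. Here I will use that edges swap adjacent-in-order elements, i.e.\ the noncrossing condition. The same localization picture handles the tensor presentation. A pair $(f,g)\in\mathrm{QSym}_r\otimes\mathrm{QSym}_{n-r}$ evaluates as $f(t_A)g(t_{A^c})$. If $h\in\mathrm{QSym}_n$, then $h(t_A\sqcup t_{A^c})$ is a constant, and since $\mathrm{QSym}_n$ is the image of the coproduct of QSym in the relevant sense, $h-h(0)$ maps to zero.

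Next I will show that $F_\alpha\mapsto$ the class dual to the cell indexed by $\alpha$ when $\alpha\in\Comp_{r,n}$, and $F_\alpha\mapsto0$ otherwise. This is the main obstacle. I would attack it by triangularity: at a fixed point $e_A$, the localization $F_\alpha(t_A)$ vanishes unless $A$ dominates the ribbon of $\alpha$, in direct analogy with the Schur-function vanishing property of factorial Schur functions. The nonequivariant limit then gives the dual basis. For $\alpha\notin\Comp_{r,n}$, the ribbon has a row longer than $n-r$, which would require more than $n$ distinct weights, so $F_\alpha$ vanishes at every fixed point.

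With these facts, $\varphi$ is surjective and its kernel contains the span of $\{F_\alpha:\alpha\notin\Comp_{r,n}\}$. A rank count against the paving then shows this span is exactly the kernel, and it is freely spanned by these $F_\alpha$ because the fundamental quasisymmetric polynomials with $\ell(\alpha)\le r$ form a basis of $\mathrm{QSym}_r$. Finally, the equality of the two presentations follows by comparing kernels. The relations $f-f(0)$, after expanding the coproduct $\Delta F_\gamma=\sum F_\alpha\otimes F_\beta$, allow every term of the form $1\otimes g$ to be eliminated modulo the ideal. The surviving relations are exactly the $F_\alpha$ with ribbon too wide, by the same argument as the Schur case via $h_k$ and $e_k$ duality.
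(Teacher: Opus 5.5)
\textbf{There is a genuine gap: the map $\varphi$ is not well defined.} The tuple $(f(t_A))_A$, with $t_A$ the weights of $A$ in increasing order, is in general not a GKM class on $QJ_{r,n}$. Your justification, that edges of $QJ_{r,n}$ swap elements adjacent in order, is false.

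Here is a counterexample. $QJ_{2,4}$ omits only the edge $\{2,3\}\{3,4\}$. So $\{2,4\}\{1,2\}$ is an edge, with label $\pm(t_1-t_4)$, even though $2$ lies between $1$ and $4$. For $f=F_{(2,1)}(x_1,x_2)=x_1^2x_2$ you get
$f_{\{2,4\}}-f_{\{1,2\}}=t_2^2t_4-t_1^2t_2\equiv t_2t_4(t_2-t_4)\not\equiv 0 \pmod{t_1-t_4}$.

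No reordering rescues ordinary quasisymmetric $f$. Ordering by $z_A$, as the paper does, fixes this edge. But on the edge $\{2,4\}\{1,4\}$, with label $\pm(t_1-t_2)$, it gives $f(t_4,t_2)-f(t_1,t_4)\equiv t_1t_4(t_4-t_1)\not\equiv 0$.

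What is missing is the equivariant condition $\rope{i}^-f=\rope{i}^+f$ defining $\eqsym{r}$, which here gives exactly $f(t_1,t_4)=f(t_4,t_1)$. The paper's proof runs as follows:
\begin{itemize}
\item it evaluates $f\in\eqsym{r}\otimes\eqsym{n-r}$ at the quasigrassmannian permutation $z_A$;
\item it proves $f_w=f_{z_A}$ for all $w\in\NC_n^A$ (Lemma~\ref{lem:fiber_calculation});
\item it obtains divisibility on each edge from $(i\,j)z_A\in\NC_n^B$ together with $\invq{A}=\invnc{z_A}$.
\end{itemize}
Relatedly, quasisymmetric functions of Chern roots have no a priori meaning, which is why the argument must pass through these equivariant lifts.

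\textbf{The same omission breaks your later steps.}
\begin{itemize}
\item \textit{Triangularity and vanishing.} Ordinary $F_\alpha$ has no vanishing at fixed points. For example, $F_{(1)}(t_{[r]})=t_1+\cdots+t_r\neq 0$, although $[r]$ does not dominate the vertex $\{1,\ldots,r-1,r+1\}$ indexed by $(1)$. Also, $\alpha\notin\Comp_{r,n}$ means the total width $|\alpha|-\ell(\alpha)+1$ exceeds $n-r$, not that a single row does. For such $\alpha$, $F_\alpha(t_A)$ is typically nonzero, e.g.\ $F_{(2)}(t_1)=t_1^2$ when $r=1$, $n=2$.
\item \textit{What the paper uses instead.} The flowup property comes from the double fundamentals $F_\alpha(\xl_r;\tl_n)$ via Fact~\ref{fact:BruhatVanishing} together with Theorem~\ref{thm:ZaInv}. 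Freeness of the ideal comes from Fact~\ref{fact:quasiFbasis}, applied before setting $\tl=0$.
\item \textit{Tensor presentation.} For ordinary quasisymmetric $h$ in $n$ variables, $h(t_{w(1)},\ldots,t_{w(n)})$ is not independent of $w$; it is for $h\in\eqsym{n}$ and $w\in\NC_n$. Your coproduct reduction to the $F_\alpha(\xl_r)$ matches the paper. But showing these lie in $\langle f-f(0)\rangle$ for $\alpha\notin\Comp_{r,n}$ has no $h_k/e_k$-duality proof for quasisymmetric functions. The paper uses the factorization $F_\alpha(\xl_r)=F_\beta(\xl_r)F_\gamma(\xl_n)+\sum_{|\theta|<|\beta|}F_\theta(\xl_r)f_\theta(\xl_n)$ from \cite{NST_a}.
\item \textit{Paving.} Every Schubert cell meets $\QGr(r;n)$, in $\mathring{X}^\lambda_{\NC}\cong\mathbb{A}^{\OHL(\lambda)}$. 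So the total rank is $\binom{n}{r}=|\Part_{r,n}|=|\Comp_{r,n}|$, and non-surjectivity of restriction from $\Gr(r;n)$ is a grading phenomenon, not a rank one.
\end{itemize}
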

  
The definition of $\QGr(r; n)$ follows naturally from the authors' work with P. Nadeau on the \emph{quasisymmetric flag variety} $\mathrm{QFl}_{n}$~\cite{BGNST2}, an equidimensional toric complex inside the complete flag variety $\mathrm{Fl}_{n}$. The cohomology compares with that of the flag variety as follows.
\begin{align*}
\text{Borel's theorem \cite{Bor53} states:} && H^{\bullet}(\mathrm{Fl}_{n}) &\cong \ZZ[x_1,\ldots,x_n] \big/ \langle f - f(0) \;|\; f \in \mathrm{Sym}_{n} \rangle,\qquad\text{and}\qquad \\[1ex]
\text{\cite[Theorem A]{BGNST2} states:} && H^{\bullet}(\mathrm{QFl}_{n}) &\cong \ZZ[x_1,\ldots,x_n] \big/ \langle f - f(0) \;|\; f \in \mathrm{QSym}_{n} \rangle.
\end{align*}
 This geometrically realizes the ``quasisymmetric coinvariants'' first studied in~\cite{ABB04}. In contrast, the quasisymmetric Grassmannian gives a direct cohomological interpretation of  $\mathrm{QSym}_{r}$.
\begin{defn}
\label{introdef_qgr}
The Quasisymmetric Grassmannian $\QGr(r; n)$ is the projection $\pi\big( \mathrm{QFl}_{n} \big) \subseteq \Gr(r; n)$ under the natural projection map $\pi: \mathrm{Fl}_{n} \to \Gr(r; n)$.
\end{defn}

Morally, one can view elements of $H^{\bullet}(\QGr(r; n))$ as \textit{quasisymmetric polynomials} in the Chern roots of the tautological subbundle on $\QGr(r; n)$. As the containment $\sym{r}\subset \qsym{r}$ is proper for $r\ge 2$, these classes often do not come from the pullback $H^\bullet(\Gr(r;n))\to H^\bullet(\QGr(r; n))$. 

Quasisymmetry is a specialization of a stronger condition called \emph{equivariant quasisymemtry} on polynomials in two sets of variables $f(x_1,\ldots,x_r;t_1,\ldots,t_n)$ introduced by us and P. Nadeau \cite{BGNST1}. 
The incidence structure of $T$-invariant curves in $\QGr(r;n)$ imposes constraints on its $T$-equivariant cohomology via GKM theory \cite{GKM98}, which corresponds to equivariant quasisymmetry.

Our construction substantially differs from the infinite-dimensional James space $\Omega\Sigma\mathbb{P}^{\infty}$ of Baker--Richter~\cite{BaRi08}, Oesinghaus~\cite{Oe19}, and Pechenik--Satriano~\cite{PeSa22,PeSa24}, which realizes its cohomology ring as quasisymmetric power series.
The primary difference is that the James space construction has a natural cohomological basis corresponding to the \emph{monomial quasisymmetric functions} $M_\alpha=\sum_{i_1<\cdots<i_{\ell}}x_{i_1}^{\alpha_1}\cdots x_{i_{\ell}}^{\alpha_\ell}$. 
Our construction of $\QGr(r;n)\subset \Gr(r;n)$ allows us to leverage the relationship between Schur and fundamental quasisymmetric polynomials; see Section~\ref{intosubsection3}.

\begin{question}
Is there a geometric relationship between the constructions $\QGr(r; n)$ and  $\Omega\Sigma\mathbb{P}^{\infty}$?
\end{question}

\subsection{Structure and characterizations}

Beyond Definition~\ref{introdef_qgr}, we give two intrinsic characterizations of $\QGr(r; n)$.  
Each exploits the fact that the incidences of $T$-invariant curves in $\Gr(r; n)$ are encoded by the \emph{Johnson graph} $J_{r, n}$ on $r$-element subsets of $\{1,\ldots,n\}$, with edges given by single-element swaps.  
In Section~\ref{sec:combinatorics_qgr} we define a spanning subgraph, the \emph{quasisymmetric Johnson graph} $QJ_{r, n}$ encoding the incidences of curves in $\QGr(r;n)$ by removing all edges that violate a certain non-crossing condition; see Figure~\ref{fig:QJ} or the $1$-skeleton of Figure~\ref{fig:placeholder} for an example.  

Our first characterization of $\QGr(r; n)$ makes use of the $\binom{n}{r}$-many Pl\"{u}cker coordinates on $\Gr(r; n)$, which we write as $\Delta_{A}$ for each vertex $A$ of the Johnson graph.  

\begin{maintheorem}[Theorem~\ref{thm:everythingequal}]
\label{introthm2}
As a subvariety of $\Gr(r; n)$, $\QGr(r; n)$ is the common vanishing locus of the Pl\"ucker coordinate products $\{\Delta_{A}\Delta_{B}\suchthat AB \in E(J_{r, n}) \setminus E(QJ_{r, n})\}$.
\end{maintheorem}

For the second characterization, we write $\lambda_{A} \in \Part_{r, n}$ for the partition canonically associated to $A \in V(J_{r, n})$, as recalled in Section~\ref{sec:combinatorics_gr}.  
If we orient $J_{r, n}$ according to inclusion of partitions, then the incoming edges to $A$ correspond precisely to the boxes in $\lambda_{A}$, which index the coordinates of $\mathring{X}^{\lambda_A}\cong \mathbb{A}^{|\lambda_A|}$.  
We then define the \emph{quasisymmetric Grassmannian Schubert cell} $\mathring{X}_{\NC}^{\lambda}$ to be the coordinate subspace of $\mathring{X}^\lambda$ corresponding to the incoming edges from $QJ_{r, n}$. The dimension of $\mathring{X}_{\NC}^{\lambda}$ is the \emph{outer hook length} $\OHL(\lambda)=\lambda_1+\ell(\lambda)-1$, the length of the longest hook.

\begin{maintheorem}[Theorem~\ref{thm:everythingequal}]
\label{introthm3}
There is an affine paving of $\QGr(r;n)$ by quasisymmetric Grassmannian Schubert cells, 
\[
\QGr(r; n) = \bigsqcup_{\lambda \in \mathrm{Part}_{r, n}} \mathring{X}_{\NC}^{\lambda}
\qquad\text{with}\qquad
\mathring{X}_{\NC}^{\lambda} \cong \mathbb{A}^{\OHL(\lambda)}.
\]
Consequently, the odd Betti numbers of $\QGr(r; n)$ vanish and the even Betti numbers are
\[
\dim H_{2k}(\QGr(r;n))=\sum_{\substack{a \le r,\, b\le n-1, \\ \text{and $a+b = k+1$}}} \binom{k-2}{a-1}.
\]
\end{maintheorem}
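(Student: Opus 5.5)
Since $\QGr(r;n)=\pi(\mathrm{QFl}_n)$ is defined as a projection, I will work torus-equivariantly and reduce everything to Schubert cells of $\Gr(r;n)$. The first step is to intersect $\QGr(r;n)$ with each classical Schubert cell $\mathring{X}^{\lambda}\cong\mathbb{A}^{|\lambda|}$. In the standard coordinates of this cell (an $r\times n$ matrix in reduced row echelon form with pivots in $A$, where $\lambda=\lambda_A$), the free entries are indexed by the boxes of $\lambda$, equivalently by the incoming edges $BA$ of $J_{r,n}$. For each such entry, the Plücker coordinate $\Delta_B$ restricted to the cell equals $\pm$ that entry. I will take the description of $\QGr(r;n)$ as the vanishing locus of $\{\Delta_A\Delta_B : AB\in E(J_{r,n})\setminus E(QJ_{r,n})\}$ (Theorem~\ref{thm:everythingequal}). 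On the cell we have $\Delta_A=1$, so each crossing edge $AB$ into $A$ forces the coordinate $\Delta_B$ to vanish. This shows $\QGr(r;n)\cap\mathring{X}^\lambda\subseteq\mathring{X}^\lambda_{\NC}$.

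For the reverse inclusion I must check that on the coordinate subspace $\mathring{X}^\lambda_{\NC}$ all the other products $\Delta_C\Delta_D$ also vanish, for crossing edges $CD$ not incident to $A$. I plan to do this via the toric description. Each irreducible component is the closure of a torus orbit, namely an $S_n$ translate of a toric ribbon Richardson variety. It suffices to show that $\mathring{X}^\lambda_{\NC}$ is contained in the union of these components. Concretely, I would show that a generic point of $\mathring{X}^\lambda_{\NC}$ has Plücker support whose matroid polytope is a face of the polytope of one of the ribbon components. Then the noncrossing edge set around $A$ is exactly the edge set of that face at the vertex $A$, and so lies in $\pi(\mathrm{QFl}_n)$.

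I expect this combinatorial step to be the main obstacle. It requires identifying the incoming $QJ_{r,n}$-edges at $A$ with the hook of $\lambda_A$: the boxes in the first row and the first column. This gives $\dim\mathring{X}^\lambda_{\NC}=\lambda_1+\ell(\lambda)-1=\OHL(\lambda)$. It also requires verifying that the Plücker coordinates of a matrix whose free entries lie only in that hook are nonzero only on subsets $C$ for which no crossing edge $CD$ has both $\Delta_C$ and $\Delta_D$ nonzero. Here I would try a direct computation: with only hook entries, each minor is essentially a product of at most two hook entries (one row entry and one column entry), so the support is explicit and its edges can be checked against the noncrossing condition.

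Given the paving, the disjoint union follows by intersecting with the Schubert decomposition of $\Gr(r;n)$, and closures of cells are unions of cells. A space paved by affine cells has vanishing odd homology and $\dim H_{2k}$ equal to the number of cells of dimension $k$. It then remains to count partitions $\lambda\in\Part_{r,n}$ with $\OHL(\lambda)=k$. Write $a=\ell(\lambda)\le r$ and $b=\lambda_1\le n-r$, so that $a+b=k+1$. The interior of $\lambda$, i.e. the shape left after deleting the hook, is then an arbitrary partition in an $(a-1)\times(b-1)$ box. I would reconcile this count with the stated binomial formula after fixing the precise convention used in the paper, possibly via a bijection of $\lambda$ with its ribbon/composition, and I would treat it as routine bookkeeping.
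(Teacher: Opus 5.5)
Your proposal has two genuine gaps. Between them they cover essentially all of the content of Theorem~\ref{thm:everythingequal}; only the final closedness argument survives.

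The first gap is circularity. You get $\QGr(r;n)\cap\mathring{X}^\lambda\subseteq\mathring{X}^\lambda_{\NC}$ by assuming $\QGr(r;n)=\pluckervanishing{QJ_{r,n}}$. But that Pl\"ucker description is part of the theorem you are proving, and the direction you use, $\QGr(r;n)\subseteq\pluckervanishing{QJ_{r,n}}$, is its substantive half. Once it is granted, your first paragraph is just Proposition~\ref{prop:QGrcontainedPV}. The paper proves this direction from the definition $\QGr(r;n)=\pi(\hhmp_n)$, in two steps.
\begin{enumerate}
\item A one-dimensional $T$-orbit in $\pi(\mathring{X}^u_{\NC})$ is the image of one lying on a coordinate line of $\mathring{X}^u_{\NC}$ that points toward $(i\,j)u$ with $(i,j)\in\invnc{u}$. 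Hence every $T$-curve of $\QGr(r;n)$ is a $QJ_{r,n}$-edge.
\item Suppose $\Delta_A\Delta_B(x)\neq 0$ for a non-$QJ$ edge $AB$. The cocharacter $\rho(t)=(t^{-b_1},\ldots,t^{-b_n})$ with $b_k=\delta_{k\in A}+\delta_{k\in B}$ degenerates $x$ to an interior point of $\langle e_A,e_B\rangle$. That point would lie in the closed $T$-stable set $\QGr(r;n)$, contradicting step 1.
\end{enumerate}
You need an argument of this kind in place of the citation.

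The second gap concerns the reverse inclusion and the dimension count. Both rest on a false identification: the incoming $QJ_{r,n}$-edges at $A$ are not the boxes of the hook of $\lambda_A$; they are only equinumerous with them. In Example~\ref{eg:A356}, with $\lambda=(5,4,1)$, the excluded inversions $\inv{A}\setminus\invq{A}=\{(4,9),(5,9),(6,9)\}$ are three boxes of the first row. In fact the hook coordinate subspace is not even contained in $\QGr(r;n)$. On it $\Delta_A=1$, while $\Delta_{(A\setminus 9)\cup 4}$ is $\pm$ a free coordinate, even though $(4,9)\notin\invq{A}$. So your proposed minor computation would be carried out on the wrong subspace.

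The polytope route does not close the gap either, for two reasons. The description of the components as translated ribbon Richardsons is obtained in the paper only after $\QGr(r;n)=\bigcup X^\lambda_{\NC}$ is known. And a point whose matroid polytope is a face of the moment polytope of a $T$-orbit closure $Y$ need not lie in $Y$ without a rigidity theorem.

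The missing idea is the lift through the quasigrassmannian permutation $z_A$. Theorem~\ref{thm:EdgesinQJ} gives $\invq{A}=\invnc{z_A}\subseteq\inv{A}$. Its nontrivial direction is proved by analysing every element of the fiber $\NC_n^A$, and Lemma~\ref{le:invnc_za} gives $|\invnc{z_A}|=\OHL(\lambda_A)$. Consequently all free entries of $\mathring{X}^{z_A}_{\NC}$ lie in the first $r$ columns and in rows outside $A$. So $\pi$ maps $\mathring{X}^{z_A}_{\NC}\subseteq\hhmp_n$ isomorphically onto $\mathring{X}^A_{\NC}$ (Theorem~\ref{thm:posfibers}). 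This yields both $\mathring{X}^A_{\NC}\subseteq\QGr(r;n)$ and $\mathring{X}^A_{\NC}\cong\mathbb{A}^{\OHL(\lambda_A)}$.

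Your final count is correct. For $k\ge 1$ the number of $k$-cells is $\sum\binom{k-1}{a-1}$ over $a\le r$, $b\le n-r$, $a+b=k+1$. Do not expect it to reconcile literally with the displayed formula, which appears to contain a misprint. For $r=2$, $n=4$, $k=2$ there are two cells, $\mathring{X}^{14}$ and $\mathring{X}^{23}$, but the displayed sum gives $1$.
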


From this result we see that $\QGr(r; n)$ is the union of the cell closures 
$X_{\NC}^{\lambda} \coloneqq \overline{\mathring{X}_{\NC}^{\lambda}}$
which we call \emph{quasisymmetric Grassmannian Schubert cycles}.  We find that they are \emph{positroid varieties} in the sense of Knutson--Lam--Speyer \cite{KLS13}, the complex closures of Postnikov's positroid cells that stratify the totally nonnegative Grassmannian \cite{Po06}, and use this to show that the complex of moment polytopes faithfully captures the combinatorics of $\QGr(r;n)$ as a complex of toric varieties. 

\begin{figure}[!ht]
    \centering
    \includegraphics[scale=0.8]{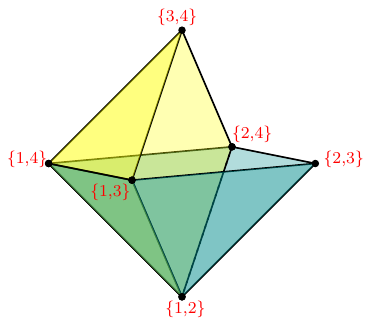}
    \caption{$\QGr(2;4)$ is the union of two isomorphic $3$-dimensional singular projective toric varieties whose moment polytopes are square pyramids. The intersection corresponds to their common faces, two triangles which share an edge, which is geometrically two $\mathbb{P}^2$s sharing a $\mathbb{P}^1$. The $1$-skeleton of this complex is the quasisymmetric Johnson graph $QJ_{2,4}$, and  $\QGr(2;4)=\{\Delta_{34}\Delta_{23}=0\}\subset \Gr(2;4)$.}
    \label{fig:placeholder}
\end{figure}

\subsection{A geometric meaning for $F_{\alpha}$}
\label{intosubsection3}

We conclude with a re-interpretation of the transition from Schur polynomials to fundamental quasisymmetric polynomials. 
By Theorem~\ref{introthm3}, the cell closures $X_{\NC}^{\lambda}$ give a homology basis for $\QGr(r; n)$.  
While $\QGr(r; n)$ does not enjoy Poincar\'{e} duality, we find a \textit{Kronecker} dual of this basis under the natural pairing $H^{k}(\QGr(r; n)) \otimes H_{k}(\QGr(r; n)) \to \ZZ$, exactly paralleling the Kronecker duality between the bases of $s_\lambda(x_1,\ldots,x_r)$ and $X^\lambda$ for $\Gr(r;n)$.

In Section~\ref{sec:combinatorics_qgr} we introduce an apparently novel bijection $\alpha \mapsto \lambda_{\alpha}$ from compositions to partitions. 

\begin{maintheorem}[Theorem~\ref{thm:Kronecker}]
\label{introthm4}
The cohomological basis $\{F_{\alpha}(x_1,\ldots,x_r) \suchthat \alpha \in \mathrm{Comp}_{r, n}\}\subset H^\bullet(\QGr(r;n))$ is Kronecker dual to the homology basis $\{[X_{\NC}^{\lambda} ] \suchthat \lambda \in \mathrm{Part}_{r, n}\}\subset H_\bullet(\QGr(r;n))$, or equivalently
\[
\int_{X_{\NC}^{\lambda_\beta}} F_{\alpha}(x_1,\ldots,x_r) = \langle [X^{\lambda_\beta}],F_\alpha(x_1,\ldots,x_r)\rangle_{\QGr(r;n)}= \delta_{\alpha,\beta}.
\]
\end{maintheorem}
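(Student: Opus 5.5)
We compute $\int_{X_{\NC}^{\lambda}} F_\alpha(x_1,\ldots,x_r)$ by restricting $F_\alpha$, viewed as a polynomial in the negative Chern roots of the tautological subbundle $\mathcal S$, to each cycle $X_{\NC}^{\lambda}$. By degree we need only $|\alpha| = \OHL(\lambda)$. The plan has three steps.

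\textbf{Step 1: identify the cycles.} Each $X_{\NC}^{\lambda}$ is a positroid variety. By the structure results, it is an $S_n$-translate of a toric Richardson variety of ribbon shape. So it is the closure of a torus orbit whose moment polytope is cut out by the edges of $QJ_{r,n}$ incoming to $\lambda$. Its dimension is $\OHL(\lambda)$, and the ribbon in question is the outer hook strip of $\lambda$, together with the boxes forced by the noncrossing condition.

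\textbf{Step 2: compute equivariantly and specialize.} We compute $\int_{X_{\NC}^{\lambda}} F_\alpha$ by Atiyah--Bott localization over the $T$-fixed points of $X_{\NC}^{\lambda}$. These are the vertices $A$ of $QJ_{r,n}$ with $\lambda_A \subseteq \lambda$ reachable through noncrossing edges. At a fixed point $A$, the restriction of $x_1,\ldots,x_r$ is $(t_a)_{a\in A}$, with a choice of order; because $F_\alpha$ is not symmetric, the order matters. This ordering issue is the subtle point, and it is exactly what equivariant quasisymmetry handles: we use the equivariantly quasisymmetric lift of $F_\alpha$ from \cite{BGNST1}, and its localizations are well defined along $QJ_{r,n}$.

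We would rather avoid localization in practice. Instead, we reduce to a Richardson variety in $\Gr(r;n)$. Via the $S_n$-translate and the isomorphism with a ribbon Richardson variety $X_{\mu}^{\nu}$ with $\nu/\mu$ a ribbon, we have $\int_{X_\mu^\nu} s_\kappa = c^{\nu}_{\mu\kappa}$. Concretely, the pushforward of $[X_{\NC}^\lambda]$ to $H_\bullet(\Gr(r;n))$ equals $\sum_\kappa c_{\mu\kappa}^\nu [X^\kappa]$, the expansion of the skew Schur (ribbon Schur) class. For ribbons, $s_{\nu/\mu} = r_\beta$ is the ribbon Schur function. Hence, for a symmetric $f$, $\int_{X_{\NC}^\lambda} f = \langle f, r_{\beta}\rangle$, where $\beta$ is the composition read from the ribbon. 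This explains the bijection $\lambda\mapsto\beta$ with $\lambda=\lambda_\beta$.

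\textbf{Step 3: extend to quasisymmetric classes.} Since $F_\alpha$ is not pulled back from $\Gr(r;n)$, we need a genuinely quasisymmetric version of Step 2. We would prove $\int_{X_{\NC}^{\lambda_\beta}} M$ for all monomials, or equivalently for all $F_\alpha$, by showing that the restriction $H^\bullet(\QGr)\to H^\bullet(X_{\NC}^{\lambda_\beta})$ factors through the toric variety. There, integration of a degree-top polynomial equals a normalized volume or mixed-volume computation on the moment polytope, a simplex-like pyramid. We then show $\int F_\alpha = \delta_{\alpha\beta}$ by induction on $\ell(\beta)$, peeling off the last part of the ribbon: the moment polytope decomposes as an iterated pyramid. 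Compatibility with Step 2 supplies a check, since $\sum_{\alpha} \langle s_\kappa, F_\alpha\rangle$ coefficients match the ribbon expansion $r_\beta=\sum_\kappa c\, s_\kappa$, consistent with $s_\kappa=\sum_\alpha K_{\kappa\alpha}F_\alpha$ and duality of ribbons with fundamentals.

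The main obstacle is Step 3: making the integral of a non-symmetric class on the toric cycle precise. Here the $x_i$ must be identified with specific toric divisor classes (line bundles of the flag lifted from $\QFl_n$ via $\pi$), and we must show the triangularity $\delta_{\alpha,\beta}$ rather than merely an upper-unitriangular matrix. I would first attempt it by pulling back to the corresponding cell closure in $\QFl_n$, where Kronecker duality with fundamental slide-type classes is established in \cite{BGNST2}, and then pushing forward via $\pi$ with degree one.
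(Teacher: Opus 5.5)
\textbf{There is a genuine gap.} Steps 1--3 never compute the pairing of $[X^{\lambda}_{\NC}]$ with a non-symmetric class.

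Step 2 only controls symmetric classes. Knowing $\int_{X^{\lambda}_{\NC}} s_\kappa$ for all $\kappa$ cannot determine $\int_{X^{\lambda}_{\NC}} F_\alpha$, because $F_\alpha$ is not in the span of the Schur classes. So your ``compatibility check'' is a consistency test, not a proof.

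Step 3 cannot be run on $X^{\lambda}_{\NC}\subset\Gr(r;n)$ as you describe it. There, and on $\QGr(r;n)$, the individual $x_i$ are not defined as cohomology classes. The class $F_\alpha$ exists on $\QGr(r;n)$ only through the GKM presentation, i.e.\ through its localizations $F_\alpha(t_{z_A(1)},\ldots,t_{z_A(r)};\tl_n)$. So there are no divisor classes to feed into a mixed-volume computation, and the ``iterated pyramid induction on $\ell(\beta)$'' has no content yet.

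Step 1 also misidentifies the ribbon. It is the composition attached to $\lambda$ by Theorem~\ref{thm:parttocomp}, not the rim or outer hook of $\lambda$. For example, $(4,3,1)$ corresponds to $(2,1,3)$, while its rim reads $(2,3,1)$.

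The route in your last sentence is the paper's, but it only works once you supply three ingredients you leave implicit.
\begin{enumerate}
\item \emph{The right lift.} It must be $X^{z_A}_{\NC}\subset\hhmp_n$ for the quasigrassmannian $z_A$, the Bruhat-minimal element of $\NC_n^A$. Since $\invnc{z_A}=\invq{A}$ (Theorem~\ref{thm:EdgesinQJ}), $\pi$ maps $\mathring{X}^{z_A}_{\NC}$ isomorphically onto $\mathring{X}^{A}_{\NC}$. Hence $\pi\colon X^{z_A}_{\NC}\to X^{A}_{\NC}$ is birational, whereas every other $u\in\NC_n^A$ has positive-dimensional fibers (Theorem~\ref{thm:posfibers}).
\item \emph{Pullback compatibility, which is the real point.} You must show that $\pi^*$ sends the class $F_\alpha\in H^\bullet_T(\QGr(r;n))$ to the polynomial $F_\alpha(\xl_r;\tl_n)\in H^\bullet_T(\hhmp_n)$. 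This is where equivariant quasisymmetry enters. Every $w\in\NC_n^A$ reduces to $z_A$ by steps $w\mapsto ws_i$ with $i\in\desnc{w}\setminus\{r\}$. The condition $i\in\{w(i),w(i+1)\}$ together with $\rope{i}^-f=\rope{i}^+f$ gives $f_w=f_{ws_i}$, hence $f_w=f_{z_A}$ (Lemma~\ref{lem:fiber_calculation}). Injectivity of localization $H^\bullet_T(\hhmp_n)\to\ZZ[\tl_n]^{\oplus\NC_n}$ then pins down $\pi^*F_\alpha$.
\item \emph{Conclusion.} Push--pull gives $\int_{X^{A}_{\NC}}F_\alpha=\int_{X^{z_A}_{\NC}}F_\alpha$. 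The Kronecker duality in \cite{BGNST2} then evaluates this to $\delta_{\alpha,\alpha_A}$. Note that the dual classes there are forest polynomials, with $F_\alpha$ the zigzag case, not ``slide-type'' classes.
\end{enumerate}
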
 

This statement allows us to generalize a classical result of Gessel~\cite{Ges84}.  
If we interpret a polynomial $f(x_{1}, \ldots, x_{r}) \in \mathrm{QSym}_{r}$ as an element of $H^\bullet(\QGr(r;n))$, then the Kronecker duality implies
\[
\int_{X_{\NC}^{\lambda_\alpha}}f = [F_{\alpha}](f)
\qquad\text{where}\qquad
[F_{\alpha}]f \coloneq \text{the coefficient of $F_{\alpha}$ in $f$}.
\]
For $f\in \sym{r}$, Gessel \cite{Ges84} shows (in combinatorial language) that the same equation holds if we replace $X^\lambda_{\NC}$ by a Richardson variety $X^\eta_{\nu}\subset \Gr(r;n)$ with $\eta/\nu$ of ribbon shape $\alpha$, so
$$[X^\lambda_{\NC}]=\sum_{\mu}([F_\alpha]s_\mu)[X^\mu]=[X^{\eta}_{\nu}]\in H_\bullet(\Gr(r;n)).$$
This coincidence is satisfactorily explained by the fact that $X^\lambda_{\NC}$ is in fact an $S_n$-translate of $X^\eta_\nu$, a computation that was essentially carried out in \cite{nst_c} which we shall recall in Section~\ref{sec:Positroid}.

\subsection{Outline} 
Section~\ref{sec:combinatorics_gr} recalls the classic combinatorics of $\Gr(r; n)$ and Section~\ref{sec:combinatorics_qgr} introduces new ``noncrossing'' generalizations thereof. 
Sections~\ref{sec:PlVan} and~\ref{sec:QGr} give alternative characterizations of $\QGr(r;n)$ by affine paving and Pl\"ucker equations.
Section~\ref{sec:Positroid} relates the paving strata to positroid and translated Richardson varieties before establishing a rigidity theorem for $\QGr(r;n)$.
Section~\ref{sec:EQSym} recalls properties of quasisymmetric polynomials for
Section~\ref{sec:Cohomology}, which proves all cohomological claims. 
Section~\ref{sec:morefacts} establishes the equidimensionality of $\QGr(r;n)$ and describes its moment polytopes.

\section{Combinatorics of the Grassmannian}
\label{sec:combinatorics_gr}

This section recalls the essential combinatorics of Grassmannian Schubert calculus, including $r$-subsets, partitions, and Grassmannian permutations ~\cite{AF24, BjBr05,Mac95, St12, EC2}. 
We let $n$ be a nonnegative integer, $[n]\coloneqq \{1,\ldots,n\}$, and $S_{n}$ be the symmetric group on $n$ letters, generated by the simple transpositions $s_i=(i,i+1)$ for $1\le i \le n-1$.

Our primary combinatorial objects will be $r$-element subsets $A \subseteq [n]$, and we denote by $\binom{[n]}{r}$ the collection of all such subsets.  
We sometimes represent $A \in \binom{[n]}{r}$ using the sets
\[
L \coloneqq [r]\setminus A = \{a_k < \cdots < a_1\}
\qquad\text{and}\qquad
R \coloneqq A \setminus [r] = \{b_1 < \cdots < b_k\},
\]
which necessarily have the same size, and whose union is the symmetric difference of $A$ and $[r]$.  We will use the notation $A = (L \mid_r R)$, and from this representation we recover $A = ([r]\setminus L)\sqcup R$.

The \emph{Johnson graph $J_{r,n}$} is the graph on vertex set $\binom{[n]}{r}$ with edges $AB$ whenever $B=(A\setminus j)\cup i$ for $i\neq j$.  
See Figure~\ref{fig:grassmannian_objects_bij} for an example. 
Section~\ref{sec:RecollGrass} explains how $J_{r,n}$ encodes the geometry of the Grassmannian $\Gr(r; n)$ by identifying vertices and edges with torus-invariant points and curves. 

The \emph{inversion set} of $A \in \binom{[n]}{r}$ is
\[
\inv{A}\coloneqq\{(i,j)\suchthat i<j,\text{ }j\in A\text{ and }i\not\in A\}.
\]
The \emph{Gale order} $\le$ is the partial order on $\binom{[n]}{r}$ generated by $A<B$ if $B = (A \setminus j) \cup i$ for $(i, j) \in \inv{A}$. In this ordering the minimum is $[r]$, the maximum is $\{n-r+1,\ldots,n\}$, and
\[
A \le B
\qquad \text{if} \qquad
|A \cap [k]| \ge |B \cap [k]| \quad \text{for all } 1 \le k \le n.
\]

\begin{lem}
\label{lem:invA}
Let $A=(a_k,\ldots,a_1\mid_r b_1,\ldots,b_k)$ and set $b_0\coloneqq r$. 
Then $\inv{A}$ consists of:
\begin{enumerate}[label=(\arabic*)]
    \item\label{invA1} $(i,b_p)$ where $i\notin A$ and $i<b_p$, for $1\leq p\leq k$;
    \item\label{invA2} $(a_p,j)$ where $j\in A\cap[r]$ and $a_p<j$, for $1\leq p\leq k$.
\end{enumerate}
\end{lem}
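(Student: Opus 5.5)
The plan is to classify each inversion $(i,j)\in\inv{A}$ according to whether its larger entry $j$ lies in $R=A\setminus[r]$ or in $A\cap[r]$. Since $j\in A$ and $A=(A\cap[r])\sqcup R$, exactly one of these holds, so the two cases cover $\inv{A}$.

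\emph{Case $j\in R$.} Here $j=b_p$ for some $1\le p\le k$. The only conditions on $i$ are $i\notin A$ and $i<b_p$, which is exactly family \ref{invA1}. Conversely, every pair in \ref{invA1} is an inversion by definition. The convention $b_0=r$ is not needed for this case; it only serves as a reference point when enumerating.

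\emph{Case $j\in A\cap[r]$.} Then $j\le r$, and since $i<j$ we also have $i\le r$. Together with $i\notin A$ this gives $i\in[r]\setminus A=L$, so $i=a_p$ for some $p$. The remaining condition $a_p<j$ with $j\in A\cap[r]$ is exactly family \ref{invA2}. Conversely, each pair $(a_p,j)$ in \ref{invA2} satisfies $a_p\notin A$, $j\in A$, and $a_p<j$, so it is an inversion.

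The two families are disjoint, because their second coordinates lie in the disjoint sets $R$ and $A\cap[r]$. Putting the cases together gives exactly the stated description of $\inv{A}$. There is no real obstacle here. The only point needing care is the observation in the second case that $i<j\le r$ forces $i\in[r]$, so that $i\notin A$ places $i$ in $L$.
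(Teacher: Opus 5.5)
Your proposal is correct and follows essentially the same route as the paper: the paper's one-line proof splits $\inv{A}$ according to whether $r<j$ or $j\le r$, which is exactly your split into $j\in R$ and $j\in A\cap[r]$. You also spell out the detail the paper leaves implicit, namely that $i<j\le r$ together with $i\notin A$ forces $i\in L$.
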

\begin{proof}
Cases (1) and (2) describe those $(i,j)\in \inv{A}$ with $r<j$ and $r\ge j$ respectively.
\end{proof}

\begin{figure}[!ht]
    \includegraphics[scale=0.8]{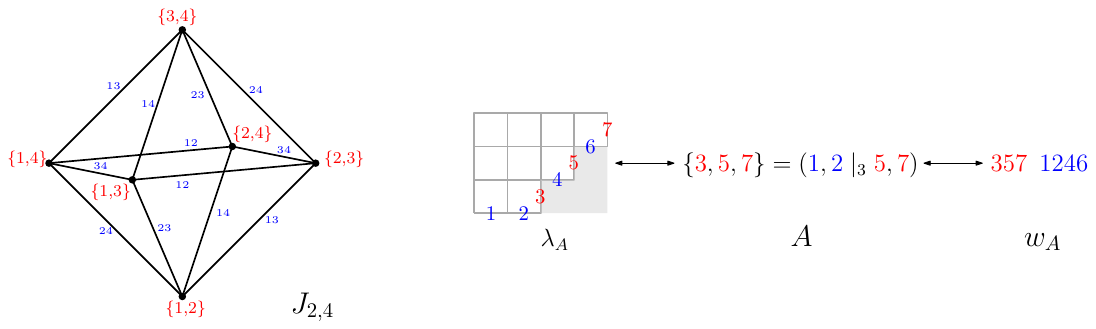}
    \caption{The Johnson graph $J_{2,4}$ (left) and an example of the bijections $\Part_{r,n}\leftrightarrow \binom{[n]}{r}\leftrightarrow \grass{r,n}$ (right) for $r=3$, $n=7$.}
    \label{fig:grassmannian_objects_bij}
\end{figure}

In addition to $r$-subsets of $[n]$, partitions and Grassmannian permutations are also used in the study of the Grassmannian.  
We now recall the bijections between these equivalent combinatorial structures; see Figure~\ref{fig:grassmannian_objects_bij} for an example.  
As in the introduction, let $\Part_{r,n}$ be the set of partitions $\lambda = (\lambda_{1} \ge \lambda_{2} \ge \cdots \ge \lambda_{r} \ge 0)$ with $\lambda_{1} \le n-r$.  
There is a bijection
\begin{equation}
\label{eq:sets_to_parts}
\begin{array}{rrcl}
& \binom{[n]}{r} &\to& \Part_{r,n} \\
& \{i_1< i_{2} <\cdots < i_r \} = A & \mapsto & \lambda_{A} \coloneq \big(i_r-r,\ldots, i_{2} - 2, i_1-1\big).
\end{array}
\end{equation}

Under the bijection $A \leftrightarrow \lambda_A$, Gale order corresponds to inclusion order, and each box of $\lambda_A$ corresponds to an inversion of $A$. 
Reading the boundary of $\lambda$ from bottom left to top right as shown in Figure~\ref{fig:grassmannian_objects_bij}, $A$ indexes the north steps, and each $(i, j) \in \inv{A}$ determines a box of $\lambda_{A}$ in the row of north step $j$ and column of east step $i$.  

We can also characterize edges in the Johnson graph using \emph{rim hooks}~\cite[Chapter 7.17]{EC2}, which are connected skew shapes containing no $2\times 2$ square. 
For $(i, j) \in \inv{A}$, the partition $\lambda_{(A\setminus j)\cup i}$ is obtained from $\lambda_{A}$ by removing  the rim hook of boundary boxes between steps $i$ and $j$.  
 
\begin{figure}[!ht]
    \includegraphics[scale=0.8]{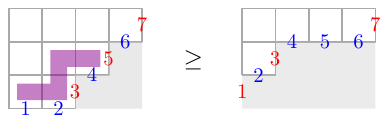}
    \caption{Rim hook removal corresponding to inversion $(1,5)$ from $\lambda_A$ where $A=\{3,5,7\}$}
    \label{fig:rim_hook_removal}
\end{figure}

The \emph{Frobenius symbol}~\cite[Chapter I.1]{Mac95} of $\lambda$ records arm and leg sizes of each box $b$ on the main diagonal of $\lambda$ (not including $b$ itself in the sizes). 
If $A = (L |_{r} R)$, the Frobenius symbol of $\lambda_A$ is 
\begin{equation}
\label{eqn:FrobSymbol}
\begin{pmatrix}
\mathrm{Arm}_1 & \cdots & \mathrm{Arm}_k \\
\mathrm{Leg}_1  & \cdots & \mathrm{Leg}_k
\end{pmatrix}=\begin{pmatrix}
b_k - r-1 & \cdots & b_1 - r-1 \\
r - a_k  & \cdots & r - a_1
\end{pmatrix}.
\end{equation}

We now describe the combinatorial equivalence between $\binom{[n]}{r}$ and  Grassmannian permutations.  
For $w \in S_{n}$, the \emph{(left) inversion set} and \emph{length} of $w$ are
\[
\inv{w}\coloneqq\{(i,j)\suchthat i<j,\ w^{-1}(i)>w^{-1}(j)\}
\qquad\text{and}\qquad
\ell(w)\coloneqq|\inv{w}|.
\]
The \emph{Bruhat order} $u\le v$ on $S_n$ is generated by $u< (i,j)u$ for $(i,j)\in \inv{u}$.
The \emph{descent set} of $w$ is $\des{w}\coloneqq\{i\suchthat w(i)>w(i+1)\}$. 
If $k\in \des{w}$ then $w s_k<w$ and $ws_kw^{-1}\in \inv{w}$.  
Let $\grass{r,n}$ be the set of \emph{$r$-Grassmannian permutations} $\{w \in S_{n}\suchthat\des{w}\subseteq\{r\}\}$.  
There is a bijection
\begin{equation}
\begin{array}{rcl}
\binom{[n]}{r} & \to & \grass{r,n}\\
A & \mapsto & w_{A}
\end{array}
\qquad\text{defined by}\qquad
\begin{array}{rl}
A=&\!\!\!\!\{w_A(1)<\cdots<w_A(r)\},\,\text{and} \\[0pt]
[n] \setminus A=&\!\!\!\!\{w_A(r+1)<\cdots<w_A(n)\}
\end{array}
\end{equation}
There is a natural action of $S_n$ on $\binom{[n]}{r}$. By construction $w_A$ is the unique minimal length permutation with $w_A\cdot[r]=A$ and each
inversion $(i,j)\in\inv{w_A}$ satisfies $w_A^{-1}(i)\leq r<w_A^{-1}(j)$.  
By construction $\inv{w_A}=\inv{A}$ and the inversions of $w_A$ correspond to boxes of $\lambda_A$ as above.

\section{Combinatorics of the Quasisymmetric Grassmannian}
\label{sec:combinatorics_qgr}

This section introduces the combinatorial objects essential for the quasisymmetric Grassmannian including noncrossing partitions, the quasisymmetric Johnson graph, quasigrassmannian permutations, and compositions, analogous to the constructions recalled in  Section~\ref{sec:combinatorics_gr}.

\subsection{Noncrossing partitions and the quasisymmetric Johnson graph}

A \emph{set partition} of $[n]$ is a collection of non-empty disjoint subsets 
$B_1, \ldots, B_k$, called \emph{blocks}, whose union is $[n]$. A 
\emph{crossing} is a 4-tuple $a < b < c < d$ in $[n]$ such that $a, c$ belong 
to one block and $b, d$ belong to another. A set partition is \emph{noncrossing} 
if it has no crossing. 

Coxeter-Catalan combinatorics, when specialized to the product of simple transpositions for the standard Coxeter element $s_{n-1}\cdots s_1=(n\,n-1\,\ldots\,1)$, transforms the noncrossing set partitions into a collection $\NC_{n} \subseteq S_{n}$ as follows. 
From a noncrossing set partition of $[n]$, each block $B = \{b_1 < \cdots < b_s\}$ determines a  backwards cycle $(b_s, b_{s-1}, \ldots, b_1)$. 
The product of these (disjoint) cycles over 
all blocks is an \emph{(algebraic) noncrossing partition} and $\NC_n$ is the set of all algebraic noncrossing partitions. 
Given this bijection we shall use the terms blocks and cycles interchangeably.
For example, the noncrossing set partition with blocks $\{\{1,5,6\},\{2,3\}, \{4\}\}$ of $[6]$ gives the algebraic noncrossing partition $(6\,5\,1)(3\,2) = 632415 \in \NC_6$.

The \emph{Kreweras order}~\cite{Kre72} is the partial order on $\NC_n$ induced by refinement of set partitions, with $\idem$ at the bottom and $(n\,n-1\,\ldots\,1)$ at the top. 
We are primary concerned with the Hasse diagram of this order, so we describe its covering relations: $u, w \in \NC_{n}$ have a covering relation if there is a transposition $\tau = (i\,j)$ such that $\tau u=w$.  
See Figure~\ref{fig:QJ} for an example of the Kreweras order on $\NC_{4}$.
\begin{defn}
The \emph{quasisymmetric Johnson graph} $QJ_{r,n}$ is the spanning subgraph of $J_{r, n}$ with edges from vertices $A$ to $B$ if and only if under the natural action of $S_n$ on $\binom{[n]}{r}$ we have
\[
A = w \cdot [r]
\qquad\text{and}\qquad
B = u \cdot [r]
\qquad\text{for some $w, u \in \NC_{n}$ Kreweras-adjacent.}
\]
See Figure~\ref{fig:QJ} for an example of $QJ_{4, 2}$. 
\end{defn}

We define the \emph{quasisymmetric inversion set} of $A\in\binom{[n]}{r}$ to be
\[
\invq{A}\coloneqq \{(i,j)\suchthat (i,j)\in \inv{A}\text{ and }AB\in QJ_{r,n}\text{ for }B=(A\setminus j)\cup i\}.
\]
The structure of $J_{r, n}$ and each $\invq{A}$ is controlled by the fibers
\[
\NC_n^A\coloneqq \{w\in \NC_n\suchthat w\cdot [r]=A\},
\]
each of which is nonempty: if $A=(a_k,\dots,a_1\mid_r b_1,\dots,b_k)\in \binom{[n]}{r}$ then the permutation $w = \prod_{1\leq i\leq k} (a_i,b_i)$ is noncrossing and also has $w \cdot [r]=A$, so $w \in \NC_n^A$.

\begin{figure}[!ht]
    \includegraphics[width=\linewidth]{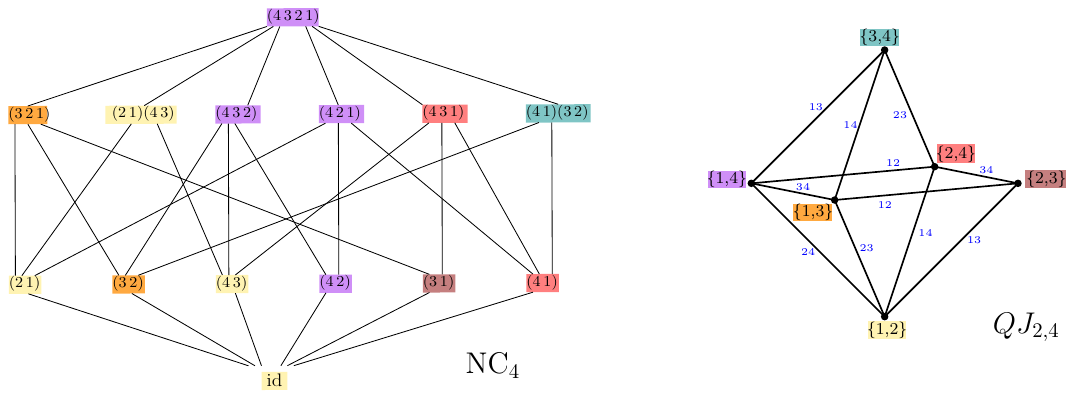}
\caption{The Kreweras lattice $\NC_4$ (left) written using cycle notation and the Quasisymmetric Johnson graph $QJ_{2,4}$ (right), with color denoting the fibers $\NC_{4}^{A}$.
}
\label{fig:QJ}
\end{figure}

\subsection{Quasigrassmannian permutations, $\Comp_{r,n}$, and $\Part_{r,n}$}
\label{sec:QJn}

For $w \in \NC_{n}$, the \emph{noncrossing descent set} of $u$ is defined to be
\[
\desnc{w} \coloneqq \{ k \in \des{w} \suchthat ws_k \in \NC_{n}\}.
\]
\begin{fact}[{\cite[Lemma 7.4]{BGNST1}}]
    \label{fact:ncdescent}
    For $w\in \NC_n$ and $i\in \des{w}$, we have $i\in \desnc{w}\leftrightarrow i\in \{w(i),w(i+1)\}$.
\end{fact}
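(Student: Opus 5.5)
The plan is a direct case analysis on how the two positions $i$ and $i+1$ sit in the cycles of $w$. The only input is the description of $\NC_n$ as products of backward cycles of noncrossing blocks.

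First I will record the local behaviour of such a $w$. If $B=\{b_1<\cdots<b_s\}$ is a block, then $w(b_j)=b_{j-1}$ for $j>1$ and $w(b_1)=b_s$. So $w(x)<x$ unless $x$ is the minimum of its block, in which case $w(x)$ is the block maximum (and $w(x)=x$ exactly when $x$ is a singleton). Set $b=w(i)$ and $a=w(i+1)$, so that $a<b$ because $i\in\des{w}$. Since $ws_i=(a\,b)\,w$, the question is when left multiplication by the transposition $(a\,b)$ keeps us inside $\NC_n$.

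\emph{Case 1: $i$ and $i+1$ lie in the same block.} Because they are consecutive integers, $i$ is the predecessor of $i+1$ in that block, so $w(i+1)=i$ and the condition $i\in\{w(i),w(i+1)\}$ holds. Multiplying by $(a\,b)$ with $a,b$ in one cycle splits it into two cycles. The block $B$ is cut into the set of elements from $i+1$ upward to $b$ and its complement in $B$. I will check directly that both pieces are again backward cycles and that they form an interval-type refinement of $B$. The resulting set partition is then still noncrossing, so $ws_i\in\NC_n$ and $i\in\desnc{w}$.

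\emph{Case 2: $i$ and $i+1$ lie in different blocks $B\ni i$ and $B'\ni i+1$.} Here $w(i+1)\neq i$, so the condition reads $w(i)=i$, i.e.\ $\{i\}$ is a singleton.
\begin{itemize}
\item If $\{i\}$ is a singleton, then $b=i>a=w(i+1)$. This forces $i+1$ not to be the minimum of $B'$, so $a<i$. The product $(a\,i)\,w$ sends $i+1\mapsto i\mapsto a$, which is exactly the backward cycle on $B'\cup\{i\}$. Since $i$ is adjacent to $i+1\in B'$, inserting it creates no crossing. Hence $ws_i\in\NC_n$.
\item If $\{i\}$ is not a singleton, then $(a\,b)\,w$ merges $B$ and $B'$ into one cycle, and I must show this cycle is not the backward cycle of a noncrossing block. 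Consider the subcases according to whether $i$ and $i+1$ are block minima.
\begin{itemize}
\item When neither is a minimum, $b=w(i)<i$ and $a=w(i+1)<i+1$. Noncrossingness of $B$ and $B'$ forces $a<b<i<i+1$. The merged cycle then visits $i\mapsto a$ and $i+1\mapsto b$, which reverses the required cyclic (decreasing) order of the four elements $a<b<i<i+1$.
\item When $i$ is a minimum, $b=\max B$, and since $a<b$ the noncrossing condition forces nesting. This leads to the same orientation failure.
\item The case where $i+1$ is a minimum is excluded by $a<b$ together with noncrossingness.
\end{itemize}
\end{itemize}

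The main obstacle is this last bullet: the merged permutation might a priori still be a backward cycle in some other arrangement. To make the argument robust, I would first try the absolute-order characterization: $ws_i\in\NC_n$ iff $(a\,b)\,w\le_T c$ for $c=(n\,n-1\,\ldots\,1)$. When $(a\,b)\,w$ merges two cycles of $w$, it lies above $w$ in absolute order, and it is still below $c$ exactly when $(a\,b)\le_T w^{-1}c$. That reduces the question to whether $a$ and $b$ lie in one cycle of the Kreweras complement $w^{-1}c$. Computing $w^{-1}c$ near $i$ and $i+1$, this happens exactly when $\{i\}$ is a singleton, and this would give the converse cleanly.
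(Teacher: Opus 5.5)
The paper does not prove this fact; it quotes it from \cite[Lemma 7.4]{BGNST1}. Your argument therefore has to stand on its own, and its main line does.

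Case 1 and the singleton subcase of Case 2 are correct. In Case 1 the descent hypothesis actually forces $i=\min B$, so the split is simply into $B\setminus\{i\}$ and $\{i\}$. Your subcases for the non-singleton situation also reach the right conclusion.

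The ``obstacle'' you flag is not real. Cycle decompositions are unique, so $ws_i=(a\,b)\,w$ has exactly one cycle with support $B\cup B'$. If $ws_i\in\NC_n$, that cycle would have to be \textit{the} backward cycle on $B\cup B'$, so there is no other arrangement to worry about.

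All your subcases collapse to one line. If $u\in\NC_n$ has $i$ and $i+1$ in a common block, then $u(i+1)=i$, because $i$ is the predecessor of $i+1$ in that block. In Case 2 the cycles through $i$ and $i+1$ merge in $ws_i$, and $(ws_i)(i+1)=w(i)$. Hence $ws_i\in\NC_n$ forces $w(i)=i$.

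Two small inaccuracies:
\begin{itemize}
\item On $a<b<i<i+1$ the merged cycle induces the cyclic order $i\to a\to i+1\to b$. This differs from the decreasing order but is not its reverse.
\item The chain $a<b<i<i+1$, and the exclusion of the case ``$i+1$ a minimum, $i$ not'', follow from $a<b$ alone; noncrossingness is not needed.
\end{itemize}

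You should drop or correct the absolute-order fallback, because as written it is wrong. The map $x\mapsto xw^{-1}$ identifies $[w,c]_T$ with $[e,cw^{-1}]_T$. So for a merging transposition, $(a\,b)\,w\le_T c$ holds iff $(a\,b)\le_T cw^{-1}$, equivalently iff $(i\;i+1)\le_T w^{-1}c$. It is not the condition $(a\,b)\le_T w^{-1}c$.

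A counterexample: take $n=3$, $w=321=(3\,1)$ and $i=2$. Then $a=1$, $b=2$ and $w^{-1}c=(2\,3)$, so $a$ and $b$ are not in one cycle of $w^{-1}c$. Yet $ws_2=312=(3\,2\,1)\in\NC_3$.

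Even in corrected form, the local computation near $i,i+1$ only gives one direction: $w(i)=i$ implies $w^{-1}c(i+1)=w^{-1}(i)=i$. The converse would need global information about the Kreweras complement, which the one-line argument above avoids.
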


We now introduce the noncrossing analogue of Grassmannian permutations.

\begin{defn}
A permutation $w \in \NC_{n}$ is \emph{$r$-Quasigrassmannian} if $\desnc{w}\subset\{r\}$.  Denote the set of Quasigrassmannian permutations by $\qgrass{r,n}\subset \NC_n$.
\end{defn}

Despite the similar definitions, we emphasize here that Quasigrassmannian permutations are \textit{not} in general Grassmannian; for example $w=4321=(41)(32)\in \qgrass{2,4}$ is not Grassmannian.

\begin{lem}
    With $b_0\coloneqq r$, the map
    \begin{equation}
        \label{eq:sets_to_qgrass}
        \begin{array}{rcl}
            \binom{[n]}{r} &\to& \qgrass{r,n} \\
            A=(a_k,\dots,a_1 \mid_r b_1,\dots,b_k) & \mapsto & z_A\coloneqq \prod_{i=1}^k\big(b_i\,(b_i-1)\,\ldots\, (b_{i-1}+1)\,a_i\big),
        \end{array}
    \end{equation}
    is a bijection. We will henceforth call $z_A$ the \emph{quasigrassmannian permutation associated to $A$}.
\end{lem}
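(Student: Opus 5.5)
The plan is to show three things: $z_A$ lies in $\NC_n$, $z_A\cdot[r]=A$, and $\desnc{z_A}\subseteq\{r\}$. Together these give a well-defined injective map into $\qgrass{r,n}$. Surjectivity is then proved by showing every $w\in\qgrass{r,n}$ equals $z_{w\cdot[r]}$.

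\textbf{Well-definedness and injectivity.} The nontrivial cycles of $z_A$ have supports $B_i=\{a_i\}\cup\{b_{i-1}+1,\ldots,b_i\}$. Since $a_k<\cdots<a_1\le r=b_0<b_1<\cdots<b_k$, the intervals $(b_{i-1},b_i]$ are consecutive and sit to the right of every $a_j$. Hence block $B_i$ lies strictly between the consecutive elements $a_{i+1}$ and $b_i+1$ of $B_{i+1}$, and the blocks form a nested chain. Every remaining element is a singleton. So the set partition is noncrossing, and each cycle is written in the prescribed backwards form, giving $z_A\in\NC_n$. The cycle $(b_i\,(b_i-1)\,\ldots\,(b_{i-1}+1)\,a_i)$ sends $a_i\mapsto b_i$, sends each $j\in(b_{i-1}+1,b_i]$ to $j-1$, and sends $b_{i-1}+1\mapsto a_i$. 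It fixes everything in $[r]\setminus L$. Therefore $z_A\cdot[r]=([r]\setminus L)\sqcup R=A$, and injectivity follows because $A$ is recovered from $z_A$.

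\textbf{Quasigrassmannian property.} Suppose $m\in\des{z_A}$ with $m\ne r$. By Fact~\ref{fact:ncdescent} it suffices to rule out $m\in\{z_A(m),z_A(m+1)\}$; we use the explicit values above.
\begin{enumerate}[label=(\roman*)]
\item If $z_A(m+1)=m$, then $m+1$ is a non-bottom element of some interval $(b_{i-1}+1,b_i]$. Then $m$ lies in the same interval, so $z_A(m)\le m$, contradicting the descent.
\item If $z_A(m)=m$, then $m$ is a fixed point and we need $z_A(m+1)<m$.
 \begin{itemize}
 \item If $m+1\le r$, then $z_A(m+1)$ is either $m+1$ or some $b_i>r$, so this fails.
 \item If $m+1>r$ with $m\ge r$, then $z_A(m+1)<m$ forces $m+1=b_{i-1}+1$ to be a bottom of an interval. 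For $i\ge2$ this gives $m=b_{i-1}$, which is not a fixed point. For $i=1$ it gives $m=b_0=r$, which is excluded.
 \end{itemize}
\end{enumerate}
Hence $\desnc{z_A}\subseteq\{r\}$.

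\textbf{Surjectivity.} This is the step I expect to be the main obstacle. Given $w\in\qgrass{r,n}$, set $A=w\cdot[r]$; I will show $w=z_A$ directly. The plan is to exploit Fact~\ref{fact:ncdescent} block by block. Take a block $\{c_1<\cdots<c_s\}$ of $w$ with $s\ge2$; on it $w$ acts by $c_t\mapsto c_{t-1}$ and $c_1\mapsto c_s$.
\begin{itemize}
\item \textbf{Consecutive top elements.} Suppose $c_{t}+1<c_{t+1}$ for some $t\ge2$. Consider the position $m=c_{t+1}-1$. Using noncrossingness, the elements strictly between $c_t$ and $c_{t+1}$ form a union of blocks. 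One then finds a noncrossing descent $m\ne r$ at the right end of that gap, or at the innermost nested block inside it. So $c_2,\ldots,c_s$ must be consecutive integers.
\item \textbf{Position relative to $r$.} A similar local analysis at the positions $c_1$ and $c_2-1$ should show $c_1\le r<c_2$.
\item \textbf{Nesting.} Finally, noncrossingness with these constraints should force the blocks to nest as a chain whose top intervals tile $(r,b_k]$ consecutively.
\end{itemize}
This is exactly the shape of $z_A$, so $w=z_A$.

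The delicate part is the case analysis producing a forbidden noncrossing descent when the structure fails. If it becomes unwieldy, the fallback is to prove $|\qgrass{r,n}|\le\binom{n}{r}$ by induction on $n$. The idea there is to delete $n$ from its block and track how $\desnc{}$ changes. Combined with injectivity, this bound gives the bijection.
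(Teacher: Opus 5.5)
Your well-definedness and injectivity steps are the paper's argument with more detail. The paper lists $\des{z_A}=\{a_1,\dots,a_k,r,b_{k-1},\dots,b_1\}$, checks with Fact~\ref{fact:ncdescent} that only $r$ is a noncrossing descent, and notes $z_A\cdot[r]=A$. Your contrapositive case analysis is an equivalent route, and you also verify $z_A\in\NC_n$, which the paper leaves implicit. In case (i) you missed one way to get $z_A(m+1)=m$: $m+1=r+1$ is the bottom of the first interval and $a_1=r$. But then $m=r$, so this is harmless. You are right to treat surjectivity separately: ``$z_A\cdot[r]=A$, giving an inverse'' only supplies a left inverse, and the paper does not spell out the rest.

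The gap is in your surjectivity sketch: the first bullet cannot be run before the second.
\begin{itemize}
\item Counterexample to the first bullet as stated: take $n=4$, $r=3$, $w=(4\,2\,1)=4132$. The block $\{1,2,4\}$ has gap $\{3\}$. The descent at the right end of the gap is at $m=3=r$, which is allowed, and the only nested block is the fixed point $3$. The forbidden noncrossing descent is at position $1=c_1$, which that bullet never examines.
\item Lemma to prove first: if $w\in\NC_n$ and a block has smallest elements $c_1<c_2$, then $w$ has a noncrossing descent in $[c_1,c_2-1]$. Argue by induction on $c_2-c_1$. By noncrossingness, $(c_1,c_2)$ is a union of blocks. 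If these are all singletons, $m=c_2-1$ satisfies $w(m+1)=c_1<w(m)$ and $m\in\{w(m),w(m+1)\}$, since either $m=c_1$ or $w(m)=m$. Otherwise, apply induction to a non-singleton block inside $(c_1,c_2)$.
\item Consequence: for $w\in\qgrass{r,n}$, the lemma gives $c_1\le r<c_2$ for every non-singleton block.
\item Your first bullet then works. A gap $(c_t,c_{t+1})$ with $t\ge 2$ lies above $r$, so it contains only fixed points. Then $m=c_{t+1}-1>r$ gives $w(m)=m>c_t=w(m+1)$, a forbidden noncrossing descent.
\item Your third bullet works similarly. Noncrossingness forces the chain nesting. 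If $f$ is the largest fixed point in $(r,b_k)$, then $f+1$ is the bottom $c_2$ of a top interval. So $w(f)=f>c_1=w(f+1)$ is a forbidden noncrossing descent with $f\neq r$.
\end{itemize}
Alternatively, injectivity together with $|\qgrass{r,n}|=|\Comp_{r,n}|=\binom{n}{r}$ finishes at once. The first equality comes from the external bijection $\qgrass{r,n}\to\Comp_{r,n}$ that the paper cites right after this lemma; the second is a direct count. This is more efficient than your proposed induction on $n$.
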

\begin{proof}
Either $z_{A} = \idem$ or $\des{z_{A}} = \{a_{1}, \ldots, a_{k}, r, b_{k-1}, \ldots, b_{1}\}$.  
Of these descents, only $r$ meets the criterion of Fact~\ref{fact:ncdescent}, so $z_{A} \in \qgrass{r,n}$.  On the other hand, $z_{A} \cdot [r] = A$, giving an inverse.  
\end{proof}

For example, if $n=10$, $r=4$, and $A=\{1,3,7,9\}=(2,4|_4 7,9)$ then $z_A=(9\,8\,2)(7\,6\,5\,4)$.

\begin{lem}
\label{le:welldefined}
For $A\in \binom{[n]}{r}$, $z_A$ belongs to the fiber $\NC_n^A$, and every noncrossing partition in $\NC_{n}^A$ can be reduced to $z_A$ by a sequence of transformations $u \mapsto u s_k$ with $k \in \desnc{u}$.
In particular, $z_A$ is the unique Bruhat-minimal element of $\NC_{n}^A$.
\end{lem}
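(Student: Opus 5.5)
The plan is to prove the three assertions in order: the fiber statement directly, then the reduction by descent moves, and finally Bruhat-minimality as a consequence of the reduction.

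\emph{Step 1: $z_A\in\NC_n^A$.} This was already noted in the proof of the previous lemma: $z_A\cdot[r]=A$. The only remaining point is that $z_A$ is noncrossing. The blocks of $z_A$ are $\{a_i\}\cup\{b_{i-1}+1,\dots,b_i\}$ for $1\le i\le k$, together with singletons. Since $a_k<\cdots<a_1\le r$ and the intervals $(b_{i-1},b_i]$ are consecutive and increasing, the blocks are nested: block $i+1$ surrounds block $i$ on the circle. Hence there is no crossing.

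\emph{Step 2: the reduction.} Take $u\in\NC_n^A$ with $u\neq z_A$. Since $z_A$ is the unique quasigrassmannian element of $\NC_n^A$ (by the bijection lemma), $\desnc{u}\not\subseteq\{r\}$. So there is some $k\neq r$ with $k\in\desnc{u}$, and then $us_k\in\NC_n$. Because $k\neq r$, the transposition $s_k$ preserves $[r]$, so $us_k\cdot[r]=u\cdot[r]=A$ and $us_k\in\NC_n^A$. Moreover $\ell(us_k)=\ell(u)-1$, since $k$ is a descent. Induction on length therefore terminates, and it can only terminate at an element of $\NC_n^A$ with $\desnc{}\subseteq\{r\}$, namely $z_A$. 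This step is where Fact~\ref{fact:ncdescent} is implicitly needed: we need $\desnc{u}\neq\emptyset$ off $r$, which follows purely from the definition of $\qgrass{r,n}$ and the bijectivity already established. So there is no real obstacle here.

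\emph{Step 3: Bruhat-minimality.} Each move $u\mapsto us_k$ with $k\in\des{u}$ is a Bruhat cover downward. Hence $z_A\le u$ for all $u\in\NC_n^A$, so $z_A$ is the minimum, in particular the unique minimal element.

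The only subtle point I expect is making sure that the right action of $s_k$ does not change $u\cdot[r]$. It does not, because $u\cdot[r]=\{u(1),\dots,u(r)\}$ and $s_k$ with $k\neq r$ permutes positions within $[r]$ or within $[n]\setminus[r]$. The noncrossing check in Step~1 is routine.
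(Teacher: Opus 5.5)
Your proposal is correct and follows the paper's proof. The paper also calls the first claim immediate and gets the reduction by induction on length, using that every element of $\NC_n\setminus\qgrass{r,n}$ has a noncrossing descent outside $\{r\}$ and that the bijection $A\mapsto z_A$ onto $\qgrass{r,n}$ forces the terminal element to be $z_A$. Your added details (right multiplication by $s_k$ with $k\neq r$ stays in the fiber, and each step goes down in Bruhat order, so $z_A\le u$) are exactly the steps that proof leaves implicit.
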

\begin{proof}
The first claim is immediate.  The second follows from induction on length, as every element of $\NC_{n} \setminus \qgrass{r,n}$ has a noncrossing descent in $[n] \setminus \{r\}$.
\end{proof}

\begin{rem}
    It is shown in~\cite[Corollary 7.9]{BGNST1} that $\NC_n$ is the equivalence class of the identity permutation under the equivalence  relation on $S_n$ where $w \sim w s_i$ whenever $i\in \{w(i),w(i+1)\}$.
    Lemma~\ref{le:welldefined} and Fact~\ref{fact:ncdescent} then say that if we restrict this relation to $i\neq r$ then the equivalences classes are the fibers $\NC_{n}^A$.
\end{rem}

As in the introduction, let $\Comp_{r,n}$ denote the set of compositions $\alpha = (\alpha_{1}, \ldots, \alpha_{k})$ whose ribbon diagram fit in an $r\times (n-r)$ box, i.e. with $k \le r$ and $\sum \alpha_{i} - k+1 \le n-r$.  

\begin{lem}
    For $z_A\in \qgrass{r,n}$, let $B_i$ be the block with $\min(B_i)=i$ for $1\leq i\leq r$.
    The map
    \begin{equation}
        \label{eq:qgrass_to_comp}
        \begin{array}{rcl}
            \qgrass{r,n} &\to& \Comp_{r,n} \\
            z_A & \mapsto & \alpha_A\coloneqq (|B_{a_k}|-1,|B_{(a_k)+1}|,\dots,|B_{r-1}|,|B_r|)
        \end{array}
    \end{equation}
    is a bijection.
\end{lem}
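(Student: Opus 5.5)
The plan is to compute the blocks of $z_A$ explicitly, read off $\alpha_A$, and then write down an explicit inverse map. Both sets are finite, so a two-sided inverse (or injectivity together with equal cardinalities) will finish the proof. We treat the degenerate case $A=[r]$ (so $k=0$ and $z_A=\idem$) separately, sending it to the empty composition, which lies in $\Comp_{r,n}$. From now on we assume $k\ge 1$.

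\textbf{Step 1 (blocks of $z_A$).} By the definition \eqref{eq:sets_to_qgrass}, the cycle $\big(b_i\,(b_i-1)\,\ldots\,(b_{i-1}+1)\,a_i\big)$ has underlying block $\{a_i\}\cup\{b_{i-1}+1,\dots,b_i\}$, with $b_0=r$. Since $a_i\le r<b_{i-1}+1$, its minimum is $a_i$ and its size is $b_i-b_{i-1}+1\ge 2$. Every $j\in[r]\setminus L$ is a fixed point, hence a singleton block. The elements $b_k+1,\dots,n$ are also singletons. Therefore, for $1\le j\le r$,
\[
|B_j| = \begin{cases} b_i-b_{i-1}+1 & \text{if } j=a_i\in L,\\ 1 & \text{if } j\notin L.\end{cases}
\]
In particular, $|B_j|\ge 2$ if and only if $j\in L$.

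\textbf{Step 2 ($\alpha_A\in\Comp_{r,n}$).} The composition $\alpha_A$ has $r-a_k+1\le r$ parts. Since $a_k$ is paired with $b_k$, its first part is $b_k-b_{k-1}\ge 1$, and all other parts are at least $1$, so $\alpha_A$ is a genuine composition. Summing the formula of Step 1 over $j=a_k,\dots,r$ gives $|\alpha_A|=b_k-a_k$. Hence the ribbon width is
\[
|\alpha_A|-\ell(\alpha_A)+1=b_k-r\le n-r,
\]
so $\alpha_A$ fits in the $r\times(n-r)$ box.

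\textbf{Step 3 (inverse).} Given a nonempty $\alpha=(\alpha_1,\dots,\alpha_\ell)\in\Comp_{r,n}$, we reconstruct $A$ as follows.
\begin{enumerate}[label=(\roman*)]
\item Set $a_k=r-\ell+1$.
\item For $1<p\le \ell$, declare $r-\ell+p\in L$ if and only if $\alpha_p\ge 2$. This determines $L=\{a_k<\dots<a_1\}$, and hence $k$.
\item Recover the $b_i$ recursively from $b_0=r$:
\begin{itemize}
\item $b_k-b_{k-1}=\alpha_1$;
\item $b_i-b_{i-1}=\alpha_{p}-1$ when $a_i$ sits at position $p>1$.
\end{itemize}
\end{enumerate}
The width condition guarantees $b_k=r+(|\alpha|-\ell+1)\le n$, so $A=(L\mid_r R)\in\binom{[n]}{r}$. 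By Step 1, this recipe exactly undoes $A\mapsto\alpha_A$. The characterization ``$|B_j|\ge2$ if and only if $j\in L$'' is what makes the positions of $L$ recoverable. One direction of the inverse check is Step 1 applied to the reconstructed $A$; the other direction is immediate from the construction.

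\textbf{Expected obstacle.} The only delicate point is the asymmetric treatment of the first part, namely the $-1$ in $|B_{a_k}|-1$. Because of it, the first part of $\alpha$ can equal $1$ even though $a_k\in L$. This is why $a_k$ must be read off from $\ell(\alpha)$ rather than from the size of its part, and the bookkeeping in Step 3 has to handle that first position separately.
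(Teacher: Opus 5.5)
Your proof is correct, but it takes a different route from the paper. The paper does no computation. It asserts that $z_A\mapsto\alpha_A$ is the composite of the bijections of \cite[Def.~7.10]{BGNST1} and \cite[Theorem 8.3]{NST_a}, which pass through the zigzag trees that reappear in Section~\ref{sec:morefacts}, and it inherits bijectivity from them.

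You instead verify bijectivity directly. The cycle attached to $a_i$ has support $\{a_i\}\cup\{b_{i-1}+1,\dots,b_i\}$, and every other point of $[r]$ is fixed, so $|B_j|\ge 2$ exactly when $j\in L$. Your inverse recovers $a_k$ from $\ell(\alpha)$, and the remaining $a_i$ and the gaps $b_i-b_{i-1}$ from the parts. Both composites are indeed the identity.

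Your route is self-contained and yields more than bijectivity. It gives $\ell(\alpha_A)=r-a_k+1=\ell(\lambda_A)$ and ribbon width $b_k-r=(\lambda_A)_1$. So the ribbon of $\alpha_A$ has the same bounding box as the outer hook of $\lambda_A$, and $|\alpha_A|=b_k-a_k=\OHL(\lambda_A)$, which is the size statement in Theorem~\ref{thm:parttocomp}.

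What the citation buys, and bare bijectivity does not, is compatibility. It shows this particular bijection agrees with the indexing of double fundamental quasisymmetric polynomials by quasigrassmannian permutations in \cite{BGNST1}. Fact~\ref{fact:BruhatVanishing} relies on that compatibility, and through it Proposition~\ref{prop:flowupfundamental} and Theorem~\ref{thm:Kronecker} do too. So your argument alone would not support those downstream uses.

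Two small points should be stated explicitly. First, $a_k=r-\ell+1\ge 1$ uses $\ell(\alpha)\le r$. Second, your bijection is $\binom{[n]}{r}\to\Comp_{r,n}$; transporting it to $\qgrass{r,n}$ uses the preceding lemma that $A\mapsto z_A$ is bijective.
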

\begin{proof}
This map is the composition of the bijections in~\cite[Def.~7.10]{BGNST1} and~\cite[Theorem 8.3]{NST_a}.
\end{proof}

As $\Comp_{r,n}$ and $\Part_{r,n}$
are both in bijection with $\binom{[n]}{r}$, we obtain a  direct bijection which appears to be new; see Figure~\ref{fig:qgrassmanian_objects_bij}.  
Define the \emph{outer hook length} $\OHL(\lambda)=\lambda_1+\ell(\lambda)-1$ (and $\OHL(\emptyset)=0$). 
\begin{figure}[!ht]
    \includegraphics[scale=0.8]{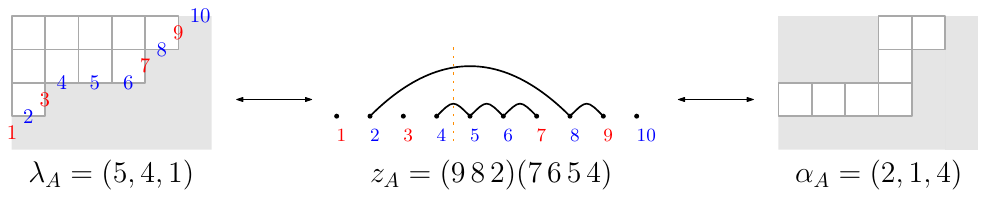}
    \caption{Bijections $\Part_{4, 10}\leftrightarrow \qgrass{4, 10} \leftrightarrow \Comp_{4, 10}$ for $A=\{1,3,7,9\}\in [10]$.}
    \label{fig:qgrassmanian_objects_bij}
\end{figure}

\begin{thm}
\label{thm:parttocomp}
The composite bijection $\Comp_{r,n} \to \Part_{r,n}$, $\alpha \mapsto \lambda_{\alpha}$ are described as follows.
\leavevmode
\begin{enumerate}
\item Each $\alpha \in \Comp_{r,n}$ can be written as $\alpha = d_11^{e_1-1}(d_2+1)1^{e_2-1}\cdots (d_k+1)1^{e_k-1}$ for numbers $d_1,\ldots,d_k,e_1,\ldots,e_k\ge 1$.  (If $\alpha$ starts with $1^a$ then $d_1=1$ and $e_1=a$.) Using Frobenius notation for partitions, the bijection takes
\[
\alpha = d_11^{e_1-1}\cdots (d_k+1)1^{e_k-1}
\mapsto \lambda_{\alpha} \coloneq
\begin{pmatrix}(\sum_{i=1}^{k} d_{i})-1&(\sum_{i=2}^{k} d_{i})-1&\cdots & d_k-1\\ (\sum_{i=1}^{k} e_{i})-1 & (\sum_{i=2}^{k} d_{i})-1 &\cdots & e_k-1\end{pmatrix}.
\]
    
\item The inverse map $\Part_{r,n}\to \Comp_{r,n}$ takes a partition $\lambda=\begin{pmatrix}A_1&\cdots & A_k\\
L_1&\cdots & L_k\end{pmatrix}$ to $$(A_1-A_2)1^{L_1-L_2-1}(A_2-A_3+1)\cdots 1^{L_{k-2}-L_{k-1}-1}(A_{k-1}-A_k+1)1^{L_{k-1}-L_k-1}(A_k+2)1^{L_k}$$
if $k\ge 2$ and $(A_1+1)1^{L_1}$ if $k=1$.
\end{enumerate}
Under this bijection, if $\alpha$ corresponds to $\lambda$ then $|\alpha|=\OHL(\lambda)$, the outer hook length of $\lambda$. 
\end{thm}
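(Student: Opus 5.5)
The plan is to compute the composite $\Comp_{r,n}\to\binom{[n]}{r}\to\Part_{r,n}$ directly. Fix $A=(a_k,\dots,a_1\mid_r b_1,\dots,b_k)$ with $b_0=r$. We write out $\alpha_A$ from the explicit form of $z_A$ in \eqref{eq:sets_to_qgrass}, and compare it with the Frobenius symbol of $\lambda_A$ in \eqref{eqn:FrobSymbol}. Since all maps involved are already known to be bijections, it suffices to check that the formula in (1) sends $\alpha_A$ to $\lambda_A$. Part (2) is then just the inversion of these formulas.

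\textbf{Step 1: block sizes of $z_A$.} The cycle $\big(b_i\,(b_i-1)\,\ldots\,(b_{i-1}+1)\,a_i\big)$ has support $\{a_i\}\cup\{b_{i-1}+1,\dots,b_i\}$. Since $a_i\le r<b_{i-1}+1$, its minimum is $a_i$, so $|B_{a_i}|=b_i-b_{i-1}+1$. Every $j\in[r]\setminus L$ is a fixed point, so $|B_j|=1$. Reading $j$ from $a_k$ up to $r$ (recall $a_k<\cdots<a_1$), the definition \eqref{eq:qgrass_to_comp} gives
\[
\alpha_A=(b_k-b_{k-1})\,1^{a_{k-1}-a_k-1}\,(b_{k-1}-b_{k-2}+1)\,1^{a_{k-2}-a_{k-1}-1}\cdots(b_1-b_0+1)\,1^{r-a_1}.
\]
So in the notation of (1), with the convention $a_0\coloneqq r+1$, we have $d_j=b_{k-j+1}-b_{k-j}$ and $e_j=a_{k-j}-a_{k-j+1}$, all of which are at least $1$.

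\textbf{Step 2: uniqueness of the decomposition.} We check that the decomposition $\alpha=d_11^{e_1-1}(d_2+1)1^{e_2-1}\cdots$ exists and is unique for every composition. The parts after the first that are $\ge2$ are exactly the $d_j+1$ with $j\ge2$. The first part is $d_1$, with the stated convention when $d_1=1$. The runs of $1$'s between these parts determine the $e_j$.

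\textbf{Step 3: telescoping.} The sums telescope:
\[
\Big(\sum_{i\ge j}d_i\Big)-1=b_{k-j+1}-r-1,\qquad \Big(\sum_{i\ge j}e_i\Big)-1=r-a_{k-j+1}.
\]
These are precisely $\mathrm{Arm}_j$ and $\mathrm{Leg}_j$ in \eqref{eqn:FrobSymbol}. This proves (1). Note that the bottom row of the displayed Frobenius symbol should read $\sum_{i\ge j}e_i-1$ throughout.

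\textbf{Step 4: inverse and size.} For (2), we solve the arm equations for the $d_j$ and the leg equations for the $e_j$: $d_j=A_j-A_{j+1}$ for $j<k$, $d_k=A_k+1$, $e_j=L_j-L_{j+1}$ for $j<k$, and $e_k=L_k+1$. Substituting into the decomposition gives the stated word, including the case $k=1$. Finally,
\[
|\alpha|=\sum_B|B|-1=(b_k-a_k+1)-1=b_k-a_k,
\]
while $\OHL(\lambda_A)=\mathrm{Arm}_1+\mathrm{Leg}_1+1=b_k-a_k$. The case $A=[r]$ gives $\emptyset\leftrightarrow\emptyset$.

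The main obstacle is bookkeeping rather than ideas. The points to get right are the ordering of the blocks (increasing $a$ corresponds to decreasing index $i$), the boundary conventions $b_0=r$ and $a_0=r+1$, and the special role of the first part, which is $|B_{a_k}|-1$ rather than $|B_{a_k}|$.
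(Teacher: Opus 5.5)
Your proposal is correct and follows the paper's route. The paper likewise obtains the bijection by composing $A\mapsto z_A\mapsto\alpha_A$ with the Frobenius symbol \eqref{eqn:FrobSymbol}; it omits the bookkeeping you carry out, and it proves $|\alpha|=\OHL(\lambda)$ exactly as you do, from the blocks covering $[a_k,b_k]$. You are also right that the bottom row of the Frobenius symbol in (1) should read $(\sum_{i\ge j}e_i)-1$ throughout, so the displayed $(\sum_{i=2}^k d_i)-1$ is a typo.
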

\begin{proof}
We omit the proof of the bijections as it is a matter of carefully composing previous bijections. The union of blocks of $z_A$ for $A=(a_k,\ldots,a_1|_r b_1,\ldots,b_k)$ is the closed interval $[a_k,b_k]$, and the outer hook length of $\lambda_A$ is given by $b_k-a_k$  by \eqref{eqn:FrobSymbol}, so we conclude $|\alpha_A|=\operatorname{OHL}(\lambda)$ by \eqref{eq:qgrass_to_comp}.
\end{proof}

\subsection{Quasisymmetric inversions and noncrossing inversions}

We now relate the quasisymmetric inversions of $A\in \binom{[n]}{r}$ with a distinguished subset of ordinary inversions defined for $\NC_n$, following \cite[Section 8.3]{BGNST1}.
\begin{defn}\label{def:invnc}
    For $w\in \NC_n$, define the set of \emph{noncrossing inversions}  by
    \begin{align*}\invnc{w}\coloneqq \{(i,j)\in \inv{w}\suchthat (i,j)w\in \NC_n\}.
    \end{align*}
    Explicitly we have $(i,j)\in \invnc{w}$ if and only if $j$ is the largest element in its cycle and either
    \begin{enumerate}
        \item $i$ belongs to the same cycle as $j$, or
        \item $i<j<w^{-1}(i)$ and $i$ is maximal with respect to this property.  
    \end{enumerate}
\end{defn}
\begin{rem}
    Note that \cite[Section 8.3]{BGNST1} defines noncrossing inversions as right inversions, which in the present notation would be the set of $(w^{-1}(i),w^{-1}(j))=w^{-1}(i,j)w$ for $(i,j)\in \invnc{w}$. 
    The content of the second half of Definition~\ref{def:invnc} is then \cite[Proposition 8.16]{BGNST1}.
\end{rem}

\begin{lem}\label{le:invnc_za}
Let $A=(a_k,\dots,a_1 \mid_r b_1,\dots,b_k)$ and set $a_0\coloneqq r+1$, $b_0\coloneqq r$.
    The noncrossing inversions of $z_A$ are:
    \begin{enumerate}[label=(\arabic*)]
        \item \label{it1}
        \emph{(Split)} for $1\leq p\leq k$, $(a_p,b_p)$ \quad and \quad
        $(q,b_p)$ where $b_{p-1}<q<b_p$;
        \item\label{it2} 
        \emph{(Merge)}
         $(a_{p+1},b_p)$  for $1\leq p\leq k-1$,\quad and \quad
        $(a_{p+1},q)$ where  $a_{p+1}<q<a_{p}$ for $0\leq p\leq k-1$.
    \end{enumerate}
    In particular $|\invnc{z_A}|=\OHL(\lambda_A)$.
\end{lem}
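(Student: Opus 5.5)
We apply the explicit description of $\invnc{w}$ in Definition~\ref{def:invnc} to $w=z_A$, whose cycle structure is given by \eqref{eq:sets_to_qgrass}. The $p$-th cycle of $z_A$ is
\[
C_p=\{a_p\}\cup\{b_{p-1}+1,\ldots,b_p\}, \qquad 1\le p\le k .
\]
Its largest element is $b_p$. It maps $b_p\mapsto b_p-1\mapsto\cdots\mapsto b_{p-1}+1\mapsto a_p\mapsto b_p$. The points not in any $C_p$ are fixed points, and each fixed point is the largest element of its own (singleton) cycle.

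\emph{Case $j=b_p$, same cycle.} Condition (1) applies. The elements $i<b_p$ of $C_p$ are exactly $a_p$ and the $q$ with $b_{p-1}<q<b_p$. These are all inversions of $z_A$ with $j=b_p$, since $j$ is the largest element of its cycle. This gives the Split family \ref{it1}.

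\emph{Case $j=b_p$, condition (2).} We need $i<b_p<z_A^{-1}(i)$. Since $z_A^{-1}(i)>i$, $i$ must be the minimum of its cycle, so $i=a_m$ with $z_A^{-1}(a_m)=b_m$. The condition becomes $a_m<b_p<b_m$, i.e.\ $m>p$. Maximality of $i$ selects the largest such $a_m$. Since the $a$'s decrease in $m$, this is $a_{p+1}$, which exists only when $p\le k-1$. This gives the merge $(a_{p+1},b_p)$.

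\emph{Case $j=q$ a fixed point.} Condition (1) is vacuous, since $i<j$ cannot lie in a singleton cycle. For condition (2) we again need $i=a_m$ with $a_m<q<b_m$. Fixed points $q>r$ lie outside every interval $(b_{m-1},b_m]$ but inside $(a_m,b_m)$ for some $m$; however, we must check that $q\notin A$ is required for $(i,q)$ to be an inversion. For $q>r$, being fixed means $q\notin A$, so no inversion arises. For $q\le r$, $q$ fixed means $q\in A\cap[r]$, which is a genuine inversion. Then $a_m<q$ for the relevant $m$, and maximality picks the largest $a_m$ below $q$. This is $a_{p+1}$ where $a_{p+1}<q<a_p$, using the convention $a_0=r+1$ for $p=0$. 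This gives the second half of \ref{it2}.

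\emph{Counting.} Finally we count, using the Frobenius description of $\OHL(\lambda_A)=b_k-a_k$ noted in the proof of Theorem~\ref{thm:parttocomp}. The Split family contributes $\sum_p(b_p-b_{p-1})=b_k-r$. The merges $(a_{p+1},b_p)$ contribute $k-1$. The merges $(a_{p+1},q)$ contribute the elements of $[a_k,r]$ other than the $a$'s, i.e.\ $(r-a_k+1)-k$. The total is $b_k-a_k$.

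The main obstacle is the careful use of condition (2) of Definition~\ref{def:invnc}. Two points need care: showing that $i$ must be a cycle minimum $a_m$, and handling the boundary conventions $a_0=r+1$, $b_0=r$ (for instance the case $p=0$ and the absence of $a_{k+1}$). We also need to confirm the ordering $a_k<\cdots<a_1\le r<b_1<\cdots<b_k$, which is exactly what makes maximality pick $a_{p+1}$.
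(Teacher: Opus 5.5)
Your route is the paper's own: read off the cycles of $z_A$, apply the explicit criterion of Definition~\ref{def:invnc}, and count. Your final list and your count are correct.

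However, your analysis of condition (2) uses $z_A$ where the criterion uses $z_A^{-1}$. By your own description of the cycle, $z_A(a_m)=b_m$. In fact $z_A^{-1}(a_m)=b_{m-1}+1$, $z_A^{-1}(b_m)=a_m$, and $z_A^{-1}(q)=q+1$ for $b_{m-1}<q<b_m$. So the step ``since $z_A^{-1}(i)>i$, $i$ must be the minimum of its cycle'' is false. Every non-maximal element of a nontrivial cycle satisfies $z_A^{-1}(i)>i$, and your argument never excludes the candidates $q\in(b_{m-1},b_m)$.

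This is not a harmless relabelling, because in general $i<j<w(i)$ and $i<j<w^{-1}(i)$ select different $i$. For $w=(5\,3\,1)\in\NC_5$ and $j=4$ they give $i=1$ and $i=3$ respectively. Indeed $(3,4)\in\invnc{w}$, while $(1,4)\notin\inv{w}$.

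Your fixed-point paragraph also contains a false claim. Since $\bigsqcup_{m}(b_{m-1},b_m]=(r,b_k]$, a fixed point $q>r$ satisfies $q>b_k$, so it lies in no interval $(a_m,b_m)$. The observation ``$q\notin A$'' is beside the point, because the inversions in question are those of the permutation $z_A$, not of the set $A$.

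The repair is short:
\begin{itemize}
\item The only $i$ with $z_A^{-1}(i)>i$ are the $a_m$ and the $q\in(b_{m-1},b_m)$. The latter can never satisfy $q<j<q+1$.
\item For $i=a_m$ the condition reads $a_m<j\le b_{m-1}$.
\item When $j=b_p$, this means $p\le m-1$, so maximality picks $a_{p+1}$, which exists only for $p\le k-1$.
\item When $j\le r$ is a fixed point, $j\le b_{m-1}$ is automatic because $b_{m-1}\ge b_0=r$. So one takes the largest $a_m<j$, namely $a_{p+1}$ with $a_{p+1}<j<a_p$.
\item For a fixed point $q>r$, we have $z_A^{-1}(i)\le\max(i,b_k)<q$ for every $i<q$. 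So condition (2) fails, and condition (1) is vacuous for a singleton cycle.
\end{itemize}
The conditions in the second and third items coincide with the ones you wrote ($b_p<b_m$, respectively $j<b_m$), which is why your conclusions survive the error.

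Finally, no separate inversion check is needed anywhere. If $j$ is maximal in its cycle then $z_A^{-1}(j)\le j$, so $i<j<z_A^{-1}(i)$ already forces $(i,j)\in\inv{z_A}$.
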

\begin{proof}
    The combinatorial criterion in Definition~\ref{def:invnc} implies the description of the noncrossing inversions.
    As for the cardinality of $\invnc{z_A}$, observe that we $b_k-r$ noncrossing inversions in item~\ref{it1} and $r-a_k$ noncrossing inversions in item~\ref{it2}. 
    Their sum $b_k-a_k$ equals $\OHL(\lambda_A)$.
\end{proof}

\begin{rem}
\label{rem:splitmerge}
    The labels refer to the effect of left-multiplying $z_A$ by the corresponding transposition: the inversions in~\ref{it1} split a cycle  into two, while those in~\ref{it2} merge two cycles  into one.   
\end{rem}

The description in Lemma~\ref{le:invnc_za} implies that $\invnc{z_A}\subseteq \inv{w_A}=\inv{A}$. 
Therefore the set $\invnc{z_A}$ identifies a distinguished collection of boxes of $\lambda_A$ under the usual correspondence between boxes and $\inv{w_A}=\inv{A}$. 
Let $k$ be the number of boxes along the main diagonal of $\lambda_A$.
Then Lemma~\ref{le:invnc_za} translates to the following:  noncrossing inversions in~\ref{it1} (resp.~\ref{it2})  correspond to the boxes along the main diagonal (resp. subdiagonal) of $\lambda_A$ as well as boxes in the first $k$ rows (resp. columns) that do not have a box below them (resp. to their right) in $\lambda_A$.

Figure~\ref{fig:ncinv_in_partition} lists the noncrossing inversions for the $z_A$ in Figure~\ref{fig:qgrassmanian_objects_bij} and identifies them with a subset of boxes in $\lambda_A$.
The dots (resp. crosses) in Figure~\ref{fig:ncinv_in_partition} represent inversions in ~\ref{it1} (resp.~\ref{it2}).

\begin{figure}[!ht]
    \centering
    \includegraphics[scale=0.8]{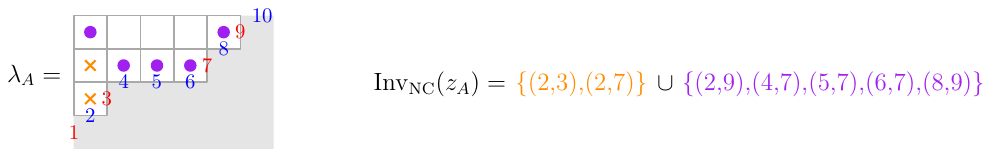}
    \caption{Noncrossing inversions of $z_A$ and the corresponding boxes in $\lambda_A$.}
    \label{fig:ncinv_in_partition}
\end{figure}

The next proposition cements the role of $r$-Quasigrassmannian permutations as the \textit{correct} noncrossing analogue of Grassmannian permutations.

\begin{thm}
   \label{thm:EdgesinQJ}
For $A\in \binom{[n]}{r}$, we have $|\invq{A}|=\operatorname{OHL}(\lambda_A)$ and
\[
    \invq{A}=\invnc{z_A}.
\]
\end{thm}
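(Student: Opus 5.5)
The cardinality claim follows from the set equality together with Lemma~\ref{le:invnc_za}, which already gives $|\invnc{z_A}|=\OHL(\lambda_A)$. So the task is to prove $\invq{A}=\invnc{z_A}$, and we prove the two inclusions separately.

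\textbf{The inclusion $\invnc{z_A}\subseteq\invq{A}$.} Take $(i,j)\in\invnc{z_A}$. By Definition~\ref{def:invnc}, $u=(i\,j)z_A\in\NC_n$, and since $\tau u$ with $\tau$ a transposition differs from $u$ by one cycle split or merge (Remark~\ref{rem:splitmerge}), $u$ and $z_A$ are Kreweras-adjacent. Moreover $u\cdot[r]=(i\,j)\cdot A=(A\setminus j)\cup i$, because $(i,j)\in\inv{A}$ by the observation after Lemma~\ref{le:invnc_za}. Hence $AB\in QJ_{r,n}$ with $B=(A\setminus j)\cup i$, so $(i,j)\in\invq{A}$.

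\textbf{The inclusion $\invq{A}\subseteq\invnc{z_A}$.} This is the main obstacle. Suppose $(i,j)\in\inv{A}$ and there are Kreweras-adjacent $w\in\NC_n^A$ and $u\in\NC_n^B$, with $u=\tau w$ for a transposition $\tau$. Then $\tau$ must carry $A$ to $B$, which forces $\tau=(i\,j)$. The idea is to transport this witnessing pair down to $z_A$. By Lemma~\ref{le:welldefined}, $w$ reduces to $z_A$ by right multiplications $w\mapsto ws_k$ with $k\in\desnc{w}$ and $k\neq r$. Right multiplication by $s_k$ commutes with left multiplication by $(i\,j)$ and fixes $[r]$ setwise since $k\neq r$. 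So it suffices to show the following step lemma:
\begin{quote}
If $w,(i\,j)w\in\NC_n$ and $k\in\desnc{w}$ with $k\ne r$, then $(i\,j)ws_k\in\NC_n$.
\end{quote}
Iterating the step lemma gives $(i\,j)z_A\in\NC_n$. Since $(i,j)\in\inv{A}=\inv{w_A}$ and $z_A\ge w_A$ is compatible, we get $(i,j)\in\inv{z_A}$: its inversions with respect to $[r]$-crossing pairs agree, which can be checked directly from the explicit cycles of $z_A$. Hence $(i,j)\in\invnc{z_A}$.

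\textbf{Proving the step lemma.} By Fact~\ref{fact:ncdescent}, $k\in\desnc{w}$ means $k\in\{w(k),w(k+1)\}$, i.e.\ $k$ and $k+1$ are cyclically adjacent in a block of $w$. Right multiplication by $s_k$ then moves $k$ into or out of the neighbouring block in a controlled way. We would split into cases according to whether $(i\,j)$ splits or merges the cycles of $w$, and whether $k$ or $k+1$ lies in $\{i,j\}$. Each case is then checked with the noncrossing criterion. If some case fails, e.g.\ when $k\in\{i,j\}$, the fallback is a direct argument. Characterize all $w\in\NC_n^A$ as those obtained from $z_A$ by "sliding" elements of $[r]$ and of $[n]\setminus[r]$ within blocks. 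Then verify via Definition~\ref{def:invnc} that any $(i,j)\in\inv{A}$ with $(i\,j)w\in\NC_n$ for such $w$ satisfies the split/merge description of Lemma~\ref{le:invnc_za}: $j$ must be a block maximum lying in $R$ or a suitable element of $[r]$, and $i$ must be the corresponding $a_p$ or an intermediate $q$.
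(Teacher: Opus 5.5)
You have a genuine gap. The inclusion $\invnc{z_A}\subseteq\invq{A}$ is fine. So is reducing the reverse inclusion to transporting the witnessing pair $(w,(i\,j)w)$ down the path $w\to z_A$ of Lemma~\ref{le:welldefined}; the final membership $(i,j)\in\inv{z_A}$ is just $z_A^{-1}(j)\le r<z_A^{-1}(i)$, since $z_A\cdot[r]=A$. The gap is the step lemma, which carries all the content of the hard inclusion: you do not prove it, and as stated it is false. Nothing in it refers to $r$ except $k\neq r$, so that hypothesis does no work. For example, take $n=3$, $r=2$, $w=312=(3\,2\,1)$, $(i\,j)=(1\,2)$ and $k=1$. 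Then $(1\,2)w=321=(3\,1)\in\NC_3$ and $1\in\desnc{w}$ (as $ws_1=132$), yet $(1\,2)ws_1=231\notin\NC_3$.

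The hypothesis you actually have, and must use, is $w^{-1}(j)\le r<w^{-1}(i)$. It is preserved along the path because $s_k$ fixes $[r]$ for $k\neq r$. Even with it, the postponed case check is the real work, and the cases you flag ($k\in\{i,j\}$) are exactly the dangerous ones. For $k\in\desnc{w}$, the move $w\mapsto ws_k$ does one of two things:
\begin{itemize}
\item it deletes $k=\min B$ from a block $B\ni k+1$; or
\item it inserts a singleton $\{k\}$ directly below $k+1$ in the block of $k+1$.
\end{itemize}
Now suppose either $\{j\}$ is a singleton block with $w^{-1}(i)=j+1$ and $k=j$, or $i$ is the minimum of the block of $j$, $i+1$ lies in that block, and $k=i$. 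In both situations $(i\,j)w=ws_k$. Hence $(i\,j)ws_k=w$, and $(i,j)$ is no longer an inversion of $ws_k$, which breaks your iteration.

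What saves the argument is that $(i\,j)w=ws_k$ means $\{w^{-1}(i),w^{-1}(j)\}=\{k,k+1\}$, and the $r$-crossing condition then forces $k=r$, which is excluded. In every remaining case you must still check, via the criterion in Definition~\ref{def:invnc}, two things: that $j$ remains a block maximum, and that $i$ remains either in its block or the maximal element with $i<j<(ws_k)^{-1}(i)$.

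Your ``fallback'' is essentially the paper's argument, which avoids the path entirely. It takes an arbitrary $w\in\NC_n^A$ with $(i,j)\in\invnc{w}\cap\inv{A}$. It then uses Lemma~\ref{lem:DescribeNCToParts} to place $i$ and $j$ relative to the blocks extending over $r$, and matches the outcome against the list in Lemma~\ref{le:invnc_za}.
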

We shall prove this theorem after a preparatory lemma.
Say that a block $B$ of a noncrossing set partition \emph{extends over $r$} if $\min B\le r < \max B$.

\begin{lem}
\label{lem:DescribeNCToParts}
For $w\in \NC_n$ with associated noncrossing set partition $\mathcal{P}$, order the blocks extending over $r$ as $B_1,\ldots,B_k$ so that $a_i\coloneqq\max(B_i\cap[r])$ satisfies $a_1>\cdots>a_k$, and set $b_i\coloneqq\max B_i$. Then $b_1<\cdots<b_k$ and
$$w\cdot[r]=(a_k,\ldots,a_1\mid_r b_1,\ldots,b_k).$$
\end{lem}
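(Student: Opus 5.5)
The plan is to compute $w\cdot[r]$ one block at a time. Because $w$ is a product of the disjoint backward cycles of $\mathcal{P}$, we have $w\cdot[r]=\bigsqcup_{B\in\mathcal{P}} w(B\cap[r])$, and it suffices to find $w(B\cap[r])$ for each block $B=\{c_1<\cdots<c_s\}$. On $B$ the cycle $(c_s\,c_{s-1}\ldots c_1)$ sends $c_{t}\mapsto c_{t-1}$ for $t\ge 2$ and $c_1\mapsto c_s$.

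First I treat the blocks. If $B\subseteq[r]$ or $B\cap[r]=\emptyset$, then $w(B\cap[r])=B\cap[r]$. If $B$ extends over $r$, write $B\cap[r]=\{c_1,\ldots,c_m\}$ with $1\le m<s$, so that $c_m=\max(B\cap[r])$. Then
\[
w(B\cap[r])=\{c_s,c_1,\ldots,c_{m-1}\}=\big(B\cap[r]\setminus\{\max(B\cap[r])\}\big)\cup\{\max B\}.
\]
Thus each extending block $B_i$ removes exactly $a_i$ from $[r]$ and adds exactly $b_i>r$. Combining over all blocks gives
\[
w\cdot[r]=\big([r]\setminus\{a_1,\ldots,a_k\}\big)\sqcup\{b_1,\ldots,b_k\}.
\]
The $a_i$ are distinct and the $b_i$ are distinct because they lie in different blocks. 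With the ordering $a_1>\cdots>a_k$ chosen in the statement, this is the set $(a_k,\ldots,a_1\mid_r b_1,\ldots,b_k)$ once we check that the $b$'s are listed increasingly.

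The one step requiring an argument is showing $b_1<\cdots<b_k$, and this is where the noncrossing condition enters. Take $i<j$, so $a_j<a_i\le r<\min(b_i,b_j)$, and suppose for contradiction that $b_j<b_i$. Then $a_j<a_i<b_j<b_i$, where $a_j,b_j\in B_j$ and $a_i,b_i\in B_i$. This is a crossing between distinct blocks, which is impossible. Hence $b_i<b_j$, and the representation in the statement holds.
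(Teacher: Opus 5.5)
Your proof is correct and follows the paper's argument essentially verbatim. Like the paper, you show that each backward cycle of a block $B$ extending over $r$ sends $B\cap[r]$ to $(B\cap[r]\setminus\{\max(B\cap[r])\})\cup\{\max B\}$, combine these over the disjoint blocks, and use the crossing $a_j<a_i<b_j<b_i$ to force $b_1<\cdots<b_k$. Your crossing step is actually stated correctly, whereas the paper's text (``if $j>i$ and $b_j>b_i$, then $a_i<a_j<b_i<b_j$'') has its inequalities reversed relative to the ordering $a_1>\cdots>a_k$.
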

\begin{proof}
First note if $j>i$ and $b_j>b_i$, then $a_i<a_j<b_i<b_j$ is a crossing between $B_i$ and $B_j$, a contradiction. Therefore  $b_1<\cdots<b_k$.

The backwards cycle of a block $B$ is supported on $B$, so blocks not extending over $r$ fix $[r]$ setwise. 
For a block $B_i$ extending over $r$ with $B\cap[r]=\{c_1<\cdots<c_p=a_i\}$ and $B\setminus[r]=\{d_1<\cdots<d_q=b_i\}$, the backwards cycle $(d_q,\ldots,d_1,c_p,\ldots,c_1)$ sends $c_j\mapsto c_{j-1}$ for $j\geq 2$ and $c_1\mapsto d_q$, so the image of $B_i\cap [r]$ is $((B_i\cap [r])\setminus a_i)\cup b_i$. 

Since distinct blocks act on disjoint sets, these replacements are independent, giving $w\cdot[r]=([r]\setminus\{a_1,\ldots,a_k\})\cup\{b_1,\ldots,b_k\}=(a_k,\ldots,a_1\mid_r b_1,\ldots,b_k)$ as desired.
\end{proof}

Call a block $B$ in a noncrossing set partition \emph{nested} if there exists another block $B'$ containing $i,j$ so that for all $k\in B$ we have $i<k<j$. If this happens we say $B$ is \emph{nested below} $B'$.

\begin{proof}[Proof of Theorem~\ref{thm:EdgesinQJ}]
The equality $|\invq{A}|=\operatorname{OHL}(\lambda_A)$ follows from Lemma~\ref{le:invnc_za} once we show $\invq{A}=\operatorname{OHL}(\lambda)$.
As the noncrossing inversions described in Lemma~\ref{le:invnc_za} are a subset of the inversions described in Lemma~\ref{lem:invA}, we know $\invnc{z_A}\subset \inv{A}$. The inclusion $\invnc{z_A}\subseteq \invq{A}$ then follows as $z_A\in \NC_n^A$ and $$\invq{A}=\bigcup_{u\in \NC_n^A}(\invnc{u}\cap \inv{A}).$$ 

We prove the reverse inclusion. Let $(i,j)\in \invq{A}$, so there exists $w\in \NC_n^A$ with $(i,j)\in \invnc{w}\cap \inv{A}$. Fix such a $w$.
In what follows we let $B_s$ be the block of $w$ extending over $r$ with \[
(\max (B_s\cap [r]),\max B_s)=(a_s,b_s)\] from Proposition~\ref{lem:DescribeNCToParts}. 
By Lemma~\ref{le:invnc_za}, the element $j\in A$ is the maximal element of its cycle $B$.
There are two cases in this lemma and we address both.

Suppose first that $i\not\in A$ has $i\in B$. 
Then $B$ extends over $r$ as it is neither contained in $A$, nor is disjoint from $A$. 
This allows us to deduce that $B=B_p$ for some $1\le p \le k$, and that $j=b_p$. Furthermore, as $i\not\in A$ we know that $i\ge a_p$, so either $i=a_p$, in which case $(i,j)\in \invnc{z_A}$ by the first part of Lemma~\ref{le:invnc_za}\ref{it1}, or $r<i<b_p$ (as $a_p=\max B_p\cap [r]$). But in this latter case because $B_{p-1}$ is nested inside $B_p$ we know that $B_p\cap [a_{p-1},b_{p-1}]=\emptyset$ so in fact $b_{p-1}<i<b_p$, establishing $(i,j)\in \invnc{z_A}$ by the second part of Lemma~\ref{le:invnc_za}\ref{it1}.

Suppose now that $i\not\in A$ lies in a different cycle $B'$ to $B$, and is the maximal element such that $i<j<w^{-1}(i)$. 
For $j\in A$ we must have $\min B\le r$. The condition $i<j<w^{-1}(i)$ forces $B$ to be nested below $B'$, so we must have $i\le \min B$ and hence $i\le r$. 
As $i\not\in A$, we can only have $i\le r$ if $B'=B_{p+1}$ extends over $r$ for some $0\le p\le k-1$, and $i=a_{p+1}$. 
On the one hand if $B$ extends over $r$ then $B=B_{p'}$ for some $p'\le p$. 
But we cannot have $p'<p$ as then $a_p$ is a larger element than $i$ with $a_p<i<w^{-1}(a_p)$. 
We deduce $p'=p$ and so $s=b_p$, implying $(i,j)\in \invnc{z_A}$ by the first part of Lemma~\ref{le:invnc_za}\ref{it2}. 
If on the other hand $B$ does not extend over $r$ then we claim that $a_{p+1}<j<a_p$. 
Indeed, if not then $a_p<j\le r$ and we arrive at a contradiction that $i<a_p$ has $a_p<j<w^{-1}(a_p)$. 
We conclude from $a_{p+1}<j<a_p$ that $(i,j)\in \invnc{z_A}$ by the second part of Lemma~\ref{le:invnc_za}\ref{it2}.
\end{proof}

\begin{thm}
\label{thm:ZaInv}
For $A,B\in \binom{[n]}{r}$, we have $z_A\ge z_{B}$ in the Bruhat order if and only if $A\ge B$.
\end{thm}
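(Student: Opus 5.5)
The forward implication follows from the standard fact that the quotient map $S_n\to\binom{[n]}{r}$, $u\mapsto u\cdot[r]$, is order preserving from Bruhat order to Gale order. Concretely, by the tableau criterion, $u\le v$ in Bruhat order if and only if for every $k$ the sorted set $u([k])$ is dominated entrywise by the sorted set $v([k])$. Taking $k=r$, and using $z_A\cdot[r]=A$ and $z_B\cdot[r]=B$ (Lemma~\ref{le:welldefined}), the condition $z_A\ge z_B$ gives $A\ge B$.

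For the converse I would again use the tableau criterion, now for every $k$. The plan is to compute the sets $z_A([k])$ explicitly from the cycle description \eqref{eq:sets_to_qgrass}. Write $A=(a_k,\dots,a_1\mid_r b_1,\dots,b_k)$ with $b_0=r$ (reusing the letter $k$ for the size of $L$ only inside this computation). On $[r]$, $z_A$ fixes every element except the $a_i$, and sends $a_i\mapsto b_i$. On $(r,n]$, $z_A$ sends $m\mapsto m-1$ for $b_{i-1}+1<m\le b_i$, sends $b_{i-1}+1\mapsto a_i$, and fixes every $m>b_k$. This gives the following descriptions.

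\textbf{Small $k$.} For $k\le r$, the set $z_A([k])$ is $[k]$ with each $a_i\le k$ replaced by $b_i$.

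\textbf{Large $k$.} For $k\ge r$, I would describe the complement $z_A([k+1,n])$. It is the interval $[k+1,n]$, shifted down by one inside each window $(b_{i-1},b_i]$ that meets it, with the element $b_{i-1}+1$ replaced by $a_i$ when that element lies in $[k+1,n]$. For the Gale comparison it is easiest to phrase this through the dual criterion on complements.

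With these formulas the converse reduces to showing that each map $A\mapsto z_A([k])$ is weakly monotone for Gale order. By transitivity it suffices to treat Gale cover relations $B\lessdot A$. Here $\lambda_B$ is obtained from $\lambda_A$ by removing one box, so $B=(A\setminus j)\cup\{j-1\}$ with $j\in A$ and $j-1\notin A$. Under $L,R$ notation such a cover takes one of three forms:
\begin{enumerate}[label=(\roman*)]
\item some $b_p$ decreases by one;
\item some $a_p$ increases by one;
\item $r\in L$ and $r+1\in R$ are simultaneously removed, which happens when $j=r+1$.
\end{enumerate}
In each case I would check directly that $z_B([k])\le z_A([k])$ entrywise for every $k$, using the explicit formulas above.

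An alternative I would try first for cases (i) and (ii) is cheaper. One exhibits $w=(j-1\ j)\,z_A$, or a short product of such transpositions, and notes that $w\le z_A$ because $(j-1,j)\in\inv{z_A}$. Then one argues that $z_B\le w$ by reducing $w$ to an element of $\NC_n^B$ through descents and applying the Bruhat-minimality in Lemma~\ref{le:welldefined}. The difficulty with this route is that $w$ need not be noncrossing, which is why the tableau computation is the fallback.

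I expect the main obstacle to be the bookkeeping in the tableau comparison. This is especially so for $k>r$ in case (i) and in case (iii), where a cover changes which windows $(b_{i-1},b_i]$ contain a given element and can shift a whole window by one. That step needs a careful but elementary comparison of the two sorted lists, and I would organize it by the position of $k$ relative to $b_{p-1}$ and $b_p$.
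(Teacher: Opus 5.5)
Your forward direction is the paper's argument (the tableau criterion at $k=r$) and is fine. The converse has a genuine gap. The tableau route is only announced (``I would check directly that $z_B([k])\le z_A([k])$ entrywise for every $k$''), and none of the comparisons are carried out, including the ones for $k>r$ that you flag as the hard part. So as written the converse is not proved.

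More importantly, the reason you give for dropping the short route is false. For every Gale cover $B=(A\setminus j)\cup\{j-1\}$, the pair $(j-1,j)$ is a noncrossing inversion of $z_A$, which you can read off from Lemma~\ref{le:invnc_za} in each of your three cases:
\begin{enumerate}
\item[(i)] Here $j=b_p$, and $j-1>r$ with $j-1\notin A$ forces $b_{p-1}<j-1<b_p$. This is a split inversion.
\item[(iii)] Here $(j-1,j)=(r,r+1)=(a_1,b_1)$, which is also a split inversion.
\item[(ii)] Here $j-1=a_{p+1}$, while $j\le r$ and $j\notin L$ force $a_{p+1}<j<a_p$ (with $a_0=r+1$). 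This is a merge inversion.
\end{enumerate}
Equivalently, every removable corner of $\lambda_A$ is one of the marked boxes described after Remark~\ref{rem:splitmerge}.

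Consequently $w=(j-1\;j)\,z_A$ is already noncrossing. Since $w\cdot[r]=B$, it lies in $\NC_n^B$, so no reduction by descents is needed. Lemma~\ref{le:welldefined} gives $z_B\le w$, and $w<z_A$ because $(j-1,j)\in\inv{A}\subseteq\inv{z_A}$. Chaining Gale covers then gives the converse.

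This is exactly the paper's proof. The paper packages the key step as the statement that the edge $AB$ lies in $QJ_{r,n}$, combined with $\invq{A}=\invnc{z_A}$ from Theorem~\ref{thm:EdgesinQJ}. The tableau approach is not wrong in principle, since by the tableau criterion the comparisons must hold. But as submitted it leaves the entire content of the converse unverified, and the one concrete argument you sketch is dismissed on a mistaken premise.
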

\begin{proof}
For the order statement, recall that $x\ge y$ in Bruhat order if and only 
if $x\cdot[i]$ is elementwise at least as large as $y\cdot[i]$ for all 
$1\le i\le n$. Since $z_A\cdot[r]=A$ and $z_{A'}\cdot[r]=A'$, the condition 
at $i=r$ gives $z_A\ge z_{A'}$ whenever $A\ge A'$.

For the converse, it suffices to treat the case $B\lessdot A$, so that 
$AB\in QJ_{r,n}$ by Lemma~\ref{le:invnc_za} (see also the discussion after Remark~\ref{rem:splitmerge} where we can see clearly that each removable corner belongs to $\invq{A}$). By   Theorem~\ref{thm:EdgesinQJ} we conclude there exists $(i,j)\in \invnc{z_A}$ with 
$B=(A \setminus j) \cup i$. We then have
$(i,j)z_A\in\NC_n^{A}$,  so by minimality of 
$z_{A}\in \NC_n^{A}$ we conclude that $z_A>(i\,j)z_A\ge z_{A}$.
\end{proof}

\section{Pl\"{u}cker vanishing and $QJ_{r,n}$}
\label{sec:PlVan}

In this section we define two subsets
$$\pluckervanishing{G}\subset \bigsqcup_{A\in \binom{[n]}{r}} \mathring{X}^A_{\NC}\subset \Gr(r; n)$$ of the Grassmannian that we will show in the next section are both equal to $\QGr(r; n)$.

\subsection{Recollections on the Grassmannian}
\label{sec:RecollGrass}

The Grassmannian $\Gr(r; n)$ is the space of all $r$-planes in $\CC^{n}$, which we identify with its image under the \emph{Pl\"{u}cker embedding}
\[
\begin{array}{rcl}
\Gr(r; n) & \hookrightarrow & \PP(\bigwedge^{r}\CC^{n}) \\
V & \mapsto & \langle v_{1} \wedge v_{2} \wedge \cdots \wedge v_{r}\rangle
\end{array}
\qquad
\text{for $\{v_{1}, v_{2} \ldots, v_{r}\}$ a basis of $V$}.
\]
This map is equivariant with respect to the standard actions of $T = (\mathbb{C}^{\ast})^{n}$ on $\Gr(r;n)$ and $\mathbb{P}(\Lambda^r \mathbb{C}^n)$. For $A=\{i_1<\cdots<i_r\}$, let 
\[
e_{A} = e_{i_{1}} \wedge e_{i_{2}}\wedge \cdots \wedge e_{i_r} \in \textstyle \bigwedge^{r}\mathbb{C}^{n}.
\]  
The $T$-fixed points of $\Gr(r; n)$ are represented by the coordinate $r$-planes $\langle e_{A} \rangle$, and the $T$-invariant curves are $\langle e_{A}, e_{A'} \rangle$ for $AA'\in E(J_{r,n})$. We denote by $\{\Delta_A\suchthat A\in \binom{[n]}{r}\}$ the coordinate functions of the $e_A$, which are defined up to simultaneous scaling.

Each point $\langle v_{1} \wedge v_{2} \wedge \cdots \wedge v_{r}\rangle  \in \Gr(r; n)$ can be represented by the full-rank $n \times r$ matrix with columns $v_{1}, v_{2}, \ldots, v_{r}\in \mathbb{C}^n$, which is unique up to right multiplication by $\GL_{r}(\CC)$. Denoting $\operatorname{Mat}_{n\times r}^{\circ}\subset \operatorname{Mat}_{n\times r}$ for the subset of full column rank $n\times r$ matrices, we may thus identify $\Gr(r;n)=\operatorname{Mat}_{n\times r}^{\circ}/\GL_r(\mathbb{C})$.  
We will \textbf{transpose all $n \times r$ matrix representatives} for the sake of space, so the entry $(i,j)$ in the transposed matrix represents the entry in the $i$th column and $j$th row.

The Grassmannian $\Gr(r; n)$ has a stratification into Schubert cells, which we write as either $\mathring{X}^A=\mathring{X}^{\lambda}$ depending on whether we index with $A\in \binom{[n]}{r}$ or $\lambda\in \Part_{r,n}$.  
For each $A = \{a_1<\cdots<a_r\} \in \binom{[n]}{r}$, the Grassmannian Schubert cell $\mathring{X}^A$ is the subset represented by matrices with $1$'s in the positions $(a_i,i)$, $\ast$ (arbitrary entries) in the positions $(i,j)$ for $(i,a_j) \in \inv{A}$, and $0$'s elsewhere.  
The $\ast$ in $\mathring{X}^{\lambda_A} \coloneq \mathring{X}^A$ bijectively correspond to the boxes of the Young diagram for $\lambda_A$. 

\begin{eg}
\label{eg:1379NC}
For $n = 10$ and $r = 4$, let $A=\{1,3,7,9\}$ and $\lambda=(5,4,1)$. Then 
\[
\mathring{X}^{\{1, 3, 7, 9\}}
=
\begin{matrix}
1\\
3\\
7\\
9 
\end{matrix}
\begin{bmatrix}
1 & 0 & 0 & 0 & 0 & 0 & 0 & 0 & 0 & 0 \\
0 & \ast & 1 & 0 & 0 & 0 & 0 & 0 & 0 & 0 \\
0 & {\ast} & 0 & {\ast} & {\ast} & {\ast} & 1 & 0 & 0 & 0 \\
0 & {\ast} & 0 & {\ast} & {\ast} & {\ast} & 0 & {\ast} & 1 & 0
\end{bmatrix}^{\top}
=
\mathring{X}^{\tikz[scale = 0.25]{\foreach \y/\c in {1/5, 2/4, 3/1}{\foreach \x in {1,...,\c}{\draw (\x, -\y) rectangle (\x+1, 1-\y);}}}}
\cong \mathbb{A}^{10}.
\]
\end{eg}

In terms of a matrix representative $M$ of a point in $\Gr(r; n)$, the Pl\"{u}cker coordinates $\Delta_{A}$ can be simultaneously computed as $\Delta_A=\det (M_{i, j})_{i \in A,\, j \in [r]}$.  This gives the following result.

\begin{fact}
\label{fact:boringdeterminant}
At any point of $\mathring{X}^A$ we have $\Delta_{A'}=0$ if $A'\not\le A$, $\Delta_{A}\neq 0$, and if $(i,a_j)\in \inv{A}$ and we set $A'=(A\setminus a_j)\cup i$ then $\Delta_{A'}\big/\Delta_{A}=\pm M_{i,j}$ for $M$ the canonical matrix representative of our point.
\end{fact}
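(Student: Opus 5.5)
The statement is Fact~\ref{fact:boringdeterminant}, and the plan is to compute everything directly from the canonical matrix representative $M$ of a point of $\mathring{X}^A$. Write $A=\{a_1<\cdots<a_r\}$. Column $j$ of $M$ (that is, row $j$ of the transposed display) has a $1$ in position $a_j$. All its other nonzero entries sit in positions $i<a_j$ with $i\notin A$; these are the $\ast$ entries indexed by $(i,a_j)\in\inv{A}$. Every other entry is $0$, and $\Delta_{A'}=\det(M_{i,j})_{i\in A',\,j\in[r]}$.

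\textbf{Step 1: $\Delta_A\neq 0$.} Restrict $M$ to the rows indexed by $A$. Entry $(a_i,j)$ can be nonzero only if $a_i=a_j$, or if $a_i<a_j$ with $a_i\notin A$. The second case is impossible because $a_i\in A$. So this minor is the identity matrix and $\Delta_A=1$.

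\textbf{Step 2: $\Delta_{A'}=0$ unless $A'\le A$.} Column $j$ of $M$ is supported on indices $\le a_j$. Hence the first $j$ columns of $M$ are supported in $[a_j]$. Suppose $\Delta_{A'}\neq0$, and write $A'=\{a'_1<\cdots<a'_r\}$. Then some term of the Leibniz expansion is nonzero, so there is a bijection $\sigma$ from $A'$ to $[r]$ with $M_{i,\sigma(i)}\ne0$, which forces $i\le a_{\sigma(i)}$. Consider the $r-j+1$ largest elements $a'_j,\dots,a'_r$. Their images under $\sigma$ are $r-j+1$ distinct columns, so some image $m$ satisfies $m\ge j$. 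This gives $a'_j\le a'_{\ell}\le a_m\le a_r$ for the corresponding $\ell$, which is not yet sharp enough. The better route is Hall-type counting. The elements of $A'\cap[k]$ must map to columns $m$ with $a_m\ge i$, and each row $i$ can only use columns with $a_m\ge i$. Conversely, apply the argument to the $t$ smallest elements of the complement: the rows $a'_j,\dots,a'_r$ need $r-j+1$ distinct columns $m$ with $a_m\ge a'_j$. That gives $|\{m : a_m\ge a'_j\}|\ge r-j+1$, i.e.\ $a_j\ge a'_j$ for every $j$. Elementwise domination $a'_j\le a_j$ is equivalent to the stated Gale condition $|A'\cap[k]|\ge|A\cap[k]|$ for all $k$, i.e.\ $A'\le A$.

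\textbf{Step 3: the ratio $\Delta_{A'}/\Delta_A$.} Let $A'=(A\setminus a_j)\cup i$ with $(i,a_j)\in\inv{A}$. The minor on rows $A'$ agrees with the identity in every row except row $i$, which replaces row $a_j$. Expand the determinant along the rows $a_m$ for $m\ne j$. Each such row is the standard unit vector $e_m$, so eliminating them leaves the single entry $M_{i,j}$ in column $j$. The result is $\Delta_{A'}=\pm M_{i,j}$, with the sign given by the position to which $i$ moves when $A'$ is sorted. Dividing by $\Delta_A=1$ gives the claim. Since Plücker coordinates are only defined up to a common scalar, the ratio is the invariant meaning of this statement.

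The main obstacle is Step 2, specifically pinning down the orientation of the Gale order: vanishing must hold exactly for $A'\not\le A$. The Hall-type counting above settles it. Steps 1 and 3 are immediate computations.
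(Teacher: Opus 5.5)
Your proposal is correct and takes the paper's route: the paper justifies this Fact only by noting that $\Delta_{A'}=\det(M_{i,j})_{i\in A',\,j\in[r]}$ for the canonical representative $M$, and you carry out exactly that computation, with the rows indexed by $A$ forming an identity block. One simplification for Step 2: your abandoned first attempt already works if the pigeonhole goes the other way. Some image of $a'_j,\dots,a'_r$ under $\sigma$ must satisfy $m\le j$, which gives $a'_j\le a_m\le a_j$ directly, so the Hall-type detour and the garbled intermediate sentences can be deleted.
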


Recall that an \emph{affine paving} of a variety $X$ is a sequence  $\emptyset=X_0\subset X_1\subset \cdots \subset X_k=X$ of closed subvarieties such that each $X_i\setminus X_{i-1}$ is isomorphic to an affine space. 
We have $\mathbb{A}^{|\inv{A}|}\cong \mathring{X}^A=\mathring{X}^{\lambda_A}\cong \mathbb{A}^{|\lambda_A|}$, and the Schubert cells in listed in any linear extension of the Gale order induce an affine paving 
\[
\Gr(r; n) = \bigsqcup_{A \in \binom{[n]}{r}} \mathring{X}^A = \bigsqcup_{\lambda \in \Part_{r, n}} \mathring{X}^{\lambda}.
\]

\subsection{Pl\"{u}cker vanishing and Quasisymmetric Schubert cells}
\label{sec:PVXNC}
For any subgraph $G \subseteq J_{r, n}$, we define an associated \emph{Pl\"ucker vanishing variety} by
\[
\pluckervanishing{G}\coloneqq \left(\bigcap_{A\in V(J_{r,n})\setminus V(G)} \{\Delta_A=0\} \right)\cap\left(\bigcap_{AB\in E(J_{r,n})\setminus E(G)}\{\Delta_A\Delta_{B}=0\}\right)\subset \Gr(r;n).
\]

Write $\operatorname{Inv}^G(A)$ for the set of inversions $(i,j)$ of $A$ such that for $B=(A\setminus j)\cup i$ we have $AB\in E(G)$.
From its definition there is a natural union of affine subspaces of the Schubert cells which always contains $\pluckervanishing{G}$. For $A = \{a_{1} < \cdots < a_{r}\}\in V(G)$, let 
\[
\mathring{X}^A_G \coloneqq \{M \in \mathring{X}^A \suchthat \text{$M_{i,j} = 0$ for $(i,a_j) \in \inv{A}\setminus \operatorname{Inv}^G(A)$}\}\cong \mathbb{A}^{|\operatorname{Inv}^G(A)|}.
\]
See Example~\ref{eg:A356} for the application of this construction to $G = QJ_{r, n}$.

\begin{prop}
\label{prop:QGrcontainedPV}
    We have $\mathring{X}^A_G=\mathring{X}^A\cap \bigcap_{(i,j)\in \operatorname{Inv}(A)\setminus \operatorname{Inv}^G(A)}\{\Delta_{(A\setminus j)\cup i}=0\}$ and
$$\pluckervanishing{G}\subset \bigsqcup_{A\in V(G)}\mathring{X}^A_G\subset \Gr(r;n)$$
\end{prop}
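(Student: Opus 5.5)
The plan is to prove the two claims in order: first the cell-by-cell description of $\mathring{X}^A_G$ via Plücker vanishing, then the containment of $\pluckervanishing{G}$ in the union of these cells. Both are read off directly from Fact~\ref{fact:boringdeterminant}.

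For the first claim, fix $A=\{a_1<\cdots<a_r\}$ and a point of $\mathring{X}^A$ with canonical matrix representative $M$. For each inversion $(i,a_j)\in\inv{A}$ set $A'=(A\setminus a_j)\cup i$. Fact~\ref{fact:boringdeterminant} gives $\Delta_{A'}=\pm M_{i,j}\Delta_A$ with $\Delta_A\neq 0$, so $\Delta_{A'}=0$ if and only if $M_{i,j}=0$. Intersecting over all $(i,a_j)\in\inv{A}\setminus\operatorname{Inv}^G(A)$ yields exactly the coordinate-subspace description of $\mathring{X}^A_G$. The same computation defines a subset of $\mathring{X}^A$ for every $A$, not only $A\in V(G)$, and this is the form used below.

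For the containment, I will use the Schubert cell decomposition $\Gr(r;n)=\bigsqcup_A\mathring{X}^A$. Take $p\in\pluckervanishing{G}$ and let $A$ be the unique index with $p\in\mathring{X}^A$. Then $\Delta_A(p)\neq 0$ by Fact~\ref{fact:boringdeterminant}. Two consequences follow from the defining equations of $\pluckervanishing{G}$:
\begin{enumerate}
\item Since $p$ lies in $\{\Delta_{A''}=0\}$ for every vertex $A''\notin V(G)$, we must have $A\in V(G)$.
\item For any $(i,j)\in\inv{A}\setminus\operatorname{Inv}^G(A)$, set $B=(A\setminus j)\cup i$. Then $AB$ is an edge of $J_{r,n}$ that is not in $E(G)$, so $\Delta_A\Delta_B$ vanishes at $p$. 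Because $\Delta_A(p)\neq0$, this forces $\Delta_B(p)=0$.
\end{enumerate}
By the first claim, the vanishings in (2) say precisely that $p\in\mathring{X}^A_G$. The union is disjoint because the Schubert cells are disjoint.

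The only subtle point I anticipate is bookkeeping. The canonical matrix has columns indexed by $j\in[r]$ while inversions are written $(i,a_j)$, so the correspondence between matrix entries and Plücker ratios must be matched carefully. I also need to check that $\operatorname{Inv}^G(A)$ as defined refers to exactly the edges $AB$ with $B$ obtained from an inversion of $A$. Neither issue is a real obstacle.
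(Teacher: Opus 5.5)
Your proposal is correct and follows the paper's proof essentially step for step. You get the equality directly from Fact~\ref{fact:boringdeterminant}. For the containment, you place a point of $\pluckervanishing{G}$ in its Schubert cell $\mathring{X}^A$ and use $\Delta_A\neq 0$ there twice: once to force $A\in V(G)$, and once to turn each relation $\Delta_A\Delta_B=0$ for a non-$G$ inversion into $\Delta_B=0$.
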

\begin{proof}
The equality follows directly from Fact~\ref{fact:boringdeterminant}. We now show the containment of $\pluckervanishing{G}$.
    Since $\Delta_A\ne 0$ on $\mathring{X}^A$, we have $\pluckervanishing{G}\subset \bigcup_{A\in V(G)}\mathring{X}^A$, so it suffices to show for $A\in V(G)$ that $\pluckervanishing{G}\cap \mathring{X}^A\subset \mathring{X}^A_G$. By  Fact~\ref{fact:boringdeterminant} we have $\Delta_A\ne 0$ on $\mathring{X}^A$, so for $(i,j)\in \operatorname{Inv}(A)\setminus \operatorname{Inv}^G(A)$ we have $\Delta_{(A\setminus j)\cup i}= 0$ is equivalent to $\Delta_A\Delta_{(A\setminus j)\cup i}=0$ on $\mathring{X}^A$, and this is a defining relation for $\pluckervanishing{G}$. 
\end{proof}

Now consider $G=QJ_{r,n}$. Then $\operatorname{Inv}^G(A)=\invq{A}$ has size $\OHL(\lambda)$ (Theorem~\ref{thm:EdgesinQJ}) and
\[
\pluckervanishing{QJ_{r,n}}=\bigcap_{AA'\in E(J_{r,n})\setminus E(QJ_{r,n})}\{\Delta_A\Delta_{A'}=0\}.
\]
\begin{defn}
\label{de:quasisymmetric_schubert_cell}
Define the \emph{quasisymmetric Schubert cell} $\mathring{X}^A_{\NC}\cong \mathbb{A}^{|\OHL(\lambda_A)|}$ to be the subspace of $\mathring{X}^A$ obtained by setting to zero all matrix entries $M_{j,i}$ for $(a_i,j)\in \inv{A}\setminus\invq{A}$.  
\end{defn}
 Equivalently $\mathring{X}^A_{\NC}=\mathring{X}^A_G$ for $G=QJ_{r,n}$.  
Under the correspondence between boxes of $\lambda$ and $\ast$'s in $\mathring{X}^{\lambda}$, these entries are characterized by the results of Section~\ref{sec:QJn}.

\begin{eg}
\label{eg:A356}
Continuing Example~\ref{eg:1379NC}, we have $\inv{A}\setminus\invq{A}=\{(4,9),(5,9),(6,9)\}$ corresponding to the unfilled cells in Figure~\ref{fig:ncinv_in_partition},
and additionally







$$
\mathring{X}^A_{\NC}\ = \ 
\begin{matrix} 1\\3\\ 7\\ 9 \end{matrix}
\begin{bmatrix}1&0&0&0&0&0&0&0&0&0\\
0&\ast&1&0&0&0&0&0&0&0\\
0&\ast&0&\ast&\ast&\ast&1&0&0&0\\
0&\ast&0&\boxed{0}&\boxed{0}&\boxed{0}&0&\ast&1&0\end{bmatrix}^{\top}\cong \mathbb{A}^{7}.
$$





\end{eg}
\begin{cor}
    We have
    $$\pluckervanishing{QJ_{r,n}}\subset \bigsqcup_{A\in \binom{[n]}{r}}\mathring{X}^A_{\NC}\subset \Gr(r;n).$$
\end{cor}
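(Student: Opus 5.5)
This is the specialization of Proposition~\ref{prop:QGrcontainedPV} to the graph $G=QJ_{r,n}$, so the plan is to check its hypotheses and then match its output with the definitions.

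First, $QJ_{r,n}$ is a spanning subgraph of $J_{r,n}$. Every fiber $\NC_n^A$ is nonempty, so every $A\in\binom{[n]}{r}$ lies in $V(QJ_{r,n})$. Hence $V(G)=\binom{[n]}{r}$, and the disjoint union in Proposition~\ref{prop:QGrcontainedPV} runs over all $A\in\binom{[n]}{r}$. The vertex-vanishing conditions in the definition of $\pluckervanishing{G}$ are then vacuous, and $\pluckervanishing{G}$ equals the edge-product vanishing locus $\pluckervanishing{QJ_{r,n}}$ displayed above.

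Second, we identify $\mathring{X}^A_G$ with $\mathring{X}^A_{\NC}$. By definition $\operatorname{Inv}^G(A)$ is the set of inversions $(i,j)$ of $A$ with $A\big((A\setminus j)\cup i\big)\in E(QJ_{r,n})$. This is exactly $\invq{A}$, so the entries set to zero in $\mathring{X}^A_G$ are those indexed by $\inv{A}\setminus\invq{A}$. These are precisely the entries zeroed in Definition~\ref{de:quasisymmetric_schubert_cell}. The only bookkeeping point is that the definition writes the entry as $M_{j,i}$ with $(a_i,j)$, whereas the proposition writes $M_{i,j}$ with $(i,a_j)$. These agree because of the transposed matrix convention for representatives.

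Substituting $\mathring{X}^A_G=\mathring{X}^A_{\NC}$ and $V(G)=\binom{[n]}{r}$ into Proposition~\ref{prop:QGrcontainedPV} gives the stated containment.

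The union is disjoint because the Schubert cells $\mathring{X}^A$ are pairwise disjoint and $\mathring{X}^A_{\NC}\subseteq\mathring{X}^A$.

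There is no substantive obstacle here; the only step needing care is the index matching in the second step.
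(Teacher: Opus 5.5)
Your proposal is correct and is exactly the paper's argument: the corollary is Proposition~\ref{prop:QGrcontainedPV} specialized to the spanning subgraph $G=QJ_{r,n}$, where $\operatorname{Inv}^G(A)=\invq{A}$ gives $\mathring{X}^A_G=\mathring{X}^A_{\NC}$. One small correction: the two index descriptions agree simply by renaming the dummy indices (the pair in Definition~\ref{de:quasisymmetric_schubert_cell} should be read as $(j,a_i)$), not because of the transposed-display convention, which does not change how entries are labelled.
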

What is special about $QJ_{r,n}$ is that this first containment is an equality, and we will show in the next section that these are equal to the quasisymmetric Grassmannian $\QGr(r;n)$.

\section{The Quasisymmetric Grassmannian}
\label{sec:QGr}

The complete flag variety $\fl{n}$ parametrizes flags of subspaces $\{0\subsetneq V_1\subsetneq \cdots \subsetneq V_{n-1}\subsetneq \mathbb{C}^n\}$.  
The \emph{quasisymmetric flag variety} of~\cite{BGNST1} is a complex of projective toric varieties $\hhmp_n\subset \fl{n}$.  
The natural projection map $\pi : \fl{n} \to \Gr(r; n)$ sends the flag $V_{\bullet}$ to the subspace $V_{r}$.
Recalling Definition~\ref{introdef_qgr}, the \emph{quasisymmetric Grassmannian} is defined to be the projection
\[
\QGr(r;n)\coloneqq \pi(\hhmp_n).
\]
Since $\pi$ is proper and $T$-equivariant, $\QGr(r;n)\subset \Gr(r;n)$ is also a complex of projective toric varieties.  
Recall the sets $\pluckervanishing{QJ_{r,n}}$ and $\mathring{X}^\lambda_{\NC}$ from Section~\ref{sec:PVXNC}.

\begin{thm}
\label{thm:everythingequal}
We have the equalities
\[
\QGr(r;n)= \pluckervanishing{QJ_{r,n}}=\bigsqcup \mathring{X}^\lambda_{\NC}=\bigcup X^\lambda_{\NC}.
\]
Furthermore the quasisymmetric Schubert cells $\mathring{X}^\lambda_{\NC}$ give an affine paving of $\QGr(r;n)$ under any linear extension of the Gale order $\le$.
\end{thm}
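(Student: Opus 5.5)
The plan is to prove the cyclic chain of inclusions
\[
\bigsqcup_{A}\mathring{X}^A_{\NC}\subseteq \QGr(r;n)\subseteq \pluckervanishing{QJ_{r,n}}\subseteq \bigsqcup_{A}\mathring{X}^A_{\NC}.
\]
The last inclusion is exactly the Corollary at the end of Section~\ref{sec:PVXNC}, so only the first two need work.

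\textbf{First inclusion.} I would use the structure of $\hhmp_n$ from \cite{BGNST1}. There, $\hhmp_n$ is paved by cells $\mathring{X}_{w,\NC}\subset \mathring{X}_w$ for $w\in\NC_n$, where $\mathring{X}_w$ is the Schubert cell in $\fl{n}$ and $\mathring{X}_{w,\NC}$ is the coordinate subspace with free entries indexed by $\invnc{w}$. Take $w=z_A$. By Lemma~\ref{le:invnc_za} and the remark following it, every $(i,j)\in\invnc{z_A}$ lies in $\inv{A}$. Hence in the canonical matrix of a point of $\mathring{X}_{z_A,\NC}$, each free entry in the first $r$ columns sits in a row below its pivot, and these pivots occupy the rows $A=z_A\cdot[r]$. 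Taking the span of the first $r$ columns and reordering the columns therefore gives exactly the canonical form of $\mathring{X}^A$, with free entries precisely at the boxes of $\invnc{z_A}$; no Gaussian elimination is needed, so no new nonzero entries appear. By Theorem~\ref{thm:EdgesinQJ}, $\invnc{z_A}=\invq{A}$, so $\pi(\mathring{X}_{z_A,\NC})=\mathring{X}^A_{\NC}$. Since $z_A\in\NC_n$, we get $\mathring{X}^A_{\NC}\subseteq\QGr(r;n)$.

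\textbf{Second inclusion.} $\QGr(r;n)$ is a finite union of $T$-orbit closures $\overline{T\cdot x}$, namely the images of the toric components of $\hhmp_n$. Suppose $\Delta_A(x)\Delta_B(x)\neq 0$ with $AB\in E(J_{r,n})$. Then $e_A$ and $e_B$ are both vertices of the matroid polytope of $x$. Since $e_A-e_B=e_i-e_j$, the segment $[e_A,e_B]$ is an edge of that polytope, so the $T$-curve $\langle e_A,e_B\rangle$ lies in $\overline{T\cdot x}\subseteq\QGr(r;n)$. It then remains to show that every $T$-invariant curve of $\QGr(r;n)$ is a $QJ_{r,n}$-edge. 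For this I would lift the curve: its preimage in $\hhmp_n$ is a projective $T$-stable variety mapping onto it. Restrict the torus to the one-dimensional subtorus acting nontrivially on the curve, and take a $T$-curve in this preimage dominating it; this should exist by Borel fixed-point and limit arguments applied within a toric component. The $T$-curves of $\hhmp_n$ are exactly the Kreweras-adjacent pairs $(w,u)$ by \cite{BGNST1}, so their images are edges $w\cdot[r]\,u\cdot[r]$ of $QJ_{r,n}$. Hence $\Delta_A\Delta_B$ vanishes on $\QGr(r;n)$ for every $AB\notin E(QJ_{r,n})$.

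\textbf{Closures and paving.} Once the cells equal $\QGr(r;n)$, which is closed, each $X^\lambda_{\NC}$ lies in $\QGr(r;n)$ and contains its cell, so $\bigcup X^\lambda_{\NC}=\QGr(r;n)$. For the paving, take a linear extension of the Gale order. The initial unions $\QGr(r;n)\cap\bigcup_{A'\le A}\mathring{X}^{A'}$ are intersections with closed unions of Schubert varieties, hence closed, and successive differences are the single cells $\mathring{X}^A_{\NC}\cong\mathbb{A}^{\OHL(\lambda_A)}$.

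I expect the main obstacle to be the lifting of $T$-curves from $\QGr(r;n)$ to $\hhmp_n$ in the second inclusion. If that step proves delicate, a fallback is to use the explicit torus-orbit-closure description of the components of $\hhmp_n$ from \cite{BGNST1} and compute the edges of the projected moment polytopes directly.
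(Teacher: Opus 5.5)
Your chain of inclusions is the same as the paper's. The first inclusion is Theorem~\ref{thm:posfibers}(1), and the third is the Corollary at the end of Section~\ref{sec:PVXNC}. Your matroid-polytope-edge argument repackages the paper's cocharacter $\rho(t)=(t^{-b_1},\ldots,t^{-b_n})$ with $b_k=\delta_{k\in A}+\delta_{k\in B}$, whose limit keeps only $\Delta_A$ and $\Delta_B$. The paving argument is identical.

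The one step that is not yet a proof is lifting a $T$-curve $C\subseteq\QGr(r;n)$ to a $T$-curve of $\hhmp_n$, and your hint aims at the wrong subtorus. Take a one-dimensional subtorus $S'$ acting nontrivially on $C$. Inside $\pi^{-1}(C)$, its fixed points all lie over the two $T$-fixed endpoints of $C$. Moreover, an $S'$-orbit closure that dominates $C$ need not be $T$-stable. So Borel's theorem applied to $S'$ does not produce the curve you want.

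The complementary choice works:
\begin{enumerate}
\item Pick $y\in C$ with $\dim T\cdot y=1$, and let $S$ be the identity component of its stabilizer, a codimension-one subtorus.
\item Apply Borel's fixed point theorem to $S$ acting on the nonempty projective fiber $\pi^{-1}(y)\cap\hhmp_n$, which is $S$-stable because $\pi$ is equivariant.
\item A fixed point $x_1$ has $\dim T\cdot x_1\le 1$. It is not $T$-fixed, since $\pi(x_1)=y$ is not. So $\overline{T\cdot x_1}$ is a $T$-curve of $\fl{n}$ lying in $\hhmp_n$ and mapping onto $C$.
\item Any $T$-curve of $\fl{n}$ joins some $w$ and $(i\,j)w$. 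Here both endpoints lie in $\hhmp_n^T=\NC_n$, so they are Kreweras-adjacent by the paper's definition. Hence $C$ joins $w\cdot[r]$ and $(i\,j)w\cdot[r]$, which is an edge of $QJ_{r,n}$.
\end{enumerate}
This also means you only need the easy direction of your cited description of $T$-curves in $\hhmp_n$.

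For comparison, the paper avoids any general lifting lemma by using the linear structure of the cells. It writes $\QGr(r;n)=\bigcup_{u\in\NC_n}\pi(\mathring{X}^u_{\NC})$. On each cell, $\pi$ is a $T$-equivariant coordinate projection into $\mathring{X}^{u\cdot[r]}$, whose characters $\chi_i/\chi_j$ are pairwise linearly independent. So every one-dimensional orbit there is a point on a coordinate line. That point lifts to a point on the coordinate line of $\mathring{X}^u_{\NC}$ indexed by some $(i,j)\in\invnc{u}$, whose orbit closure is the curve from $u$ to $(i\,j)u$. Both routes work once the lift is justified. The paper's is more explicit and reads the edge off $\invnc{u}$ directly; yours, once repaired as above, is a general fact about proper equivariant maps and does not depend on the cell coordinates.
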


The proof of Theorem~\ref{thm:everythingequal} is given at the end of the section; we first recall an affine paving of $\hhmp_{n}$ in Section~\ref{sec:qfl_recall} and relate this paving to the cells $\mathring{X}^\lambda_{\NC}$ in Section~\ref{sec:projection_of_cells}.   

\subsection{Noncrossing Schubert cells in $\hhmp_n$}
\label{sec:qfl_recall}

We identify $\fl{n}$ with $\GL_n/B$, where $B\subset \GL_n$ is the subgroup of upper triangular matrices.  
The flag associated to a coset $MB \in \GL_{n}/B$ is obtained by taking $V_i$ to be the span of the first $i$ columns of $M$.  
The matrix representative of the projection $\pi(MB) \in \Gr(r; n)$ is the principle $n \times r$ submatrix $(M_{i, j})_{\substack{1 \le i \le n \\ 1 \le j \le r}}$, or any right $\GL_{r}$-multiple thereof. 

For $w \in S_{n}$, the \emph{Schubert cell} $\mathring{X}^w$ is $BwB/B$.  The Bruhat decomposition states
\[
\GL_n/B=\bigsqcup_{w\in S_n}\mathring{X}^w\quad\text{ with }\quad \mathring{X}^w\cong \mathbb{A}^{\ell(w)},
\]
and this gives an affine paving of $\fl{n}$ with under any linear extension of the Bruhat order.  
Furthermore every Schubert variety $X^{w} = \overline{\mathring{X}^{w}}$ is a union of Schubert cells, though this is not required by the definition of an affine paving.  

Each $\mathring{X}^w$ has a canonical affine chart that can be represented as a $\{0, 1, \ast\}$ matrix in the manner of Section~\ref{sec:RecollGrass}.  
Beginning with the permutation matrix for $w$, which has $1$'s in positions $(w(j), j)$ and $0$'s elsewhere, fill each position which is not  below or right of a $1$ with  $\ast$, representing a free coordinate.  
This amounts to placing an $\ast$ in position $(i,w^{-1}(j))$ for each inversion $(i, j) \in \inv{w}$.

Following~\cite[Definition 9.6]{BGNST2}, define the \emph{noncrossing Schubert cell}\footnote{Noncrossing Schubert cells are called noncrossing Bruhat cells in~\cite{BGNST2}.} $\mathring{X}^w_{\NC}$ for $w\in \NC_n$ to be the subspace of $\mathring{X}^{w}$ obtained by zeroing out the $\ast$ in positions $(i, w^{-1}(j))$ for $(i, j) \notin \invnc{w}$.  
Also define the \emph{noncrossing Schubert cycle} $X^w_{\NC} = \overline{\mathring{X}^w_{\NC}}$. 

\begin{eg}
Let $w = 53214$, so that $\invnc{w} = \{(1\,3), (1\,5), (2\,3), (4\,5)\}$ and $\inv{w} \setminus \invnc{w} = \{(1\,2), (2\,5), (3\,5)\}$.  Then we have
\[
\mathring{X}^{53214}=\begin{bmatrix}
\ast&\ast&\ast&1&0\\
\ast&\ast&1&0&0\\
\ast&1&0&0&0\\
\ast&0&0&0&1\\
1&0&0&0&0
\end{bmatrix}\cong\mathbb{A}^7\quad\text{ and }\quad\mathring{X}^{53214}_{\NC}=\begin{bmatrix}
\ast&\ast&\boxed{0}&1&0\\
\boxed{0}&\ast&1&0&0\\
\boxed{0}&1&0&0&0\\
\ast&0&0&0&1\\
1&0&0&0&0
\end{bmatrix}\cong \mathbb{A}^4.
\]
\end{eg}
In \cite{BGNST2} these cells were shown to induce an affine paving of $\hhmp_n$ with respect to any linear extension of the Bruhat order restricted to $\NC_n$:
$$
\hhmp_n=\bigsqcup_{w\in \NC_n}\mathring{X}^w_{\NC}\quad\text{ with }\quad\mathring{X}^w_{\NC}\cong \mathbb{A}^{|\invnc{w}|}.
$$
Unlike the paving of $\fl{n}$ by Schubert cells, some of the cell closures $X^w_{\NC}$ are strictly contained in (and are not equal to) a union of noncrossing Schubert cells.

\subsection{Projection of noncrossing Schubert cells in $\fl{n}$}
\label{sec:projection_of_cells}

For $w \in S_{n}$, if $w \cdot [r] = A = \{a_{1} < \cdots < a_{r}\}$ then the projection $\pi$ maps $\mathring{X}^{w}$ onto $\mathring{X}^{A}$.  
More specifically, $\inv{A} \subseteq \inv{w}$ and $\pi|_{\mathring{X}^{w}}$ is the coordinate projection sending the $(i, w^{-1}(j))$-coordinate subspace of $\mathring{X}^{w}$ to zero unless $w^{-1}(j)\le r < w^{-1}(i)$, in which case $(i, j) = (i, a_{k}) \in \inv{A}$ and the $(i, w^{-1}(j))$-coordinate subspace maps to the $(i, k)$-coordinate subspace of $\mathring{X}^{A}$.

\begin{thm}
\label{thm:posfibers}
If $u \in \NC_{n}^{A}$, then $\pi(\mathring{X}^{u}_{\NC}) \subseteq \mathring{X}^{A}$.  Moreover: 
\begin{enumerate}
\item if $u = z_{A}$, then $\pi|_{\mathring{X}^{u}_{\NC}}$ is an isomorphism onto $\mathring{X}^{A}_{\NC}$; and

\item if $u \neq z_{A}$ then $\pi|_{\mathring{X}^{u}_{\NC}}$ has positive-dimensional fibers. 
\end{enumerate}
\end{thm}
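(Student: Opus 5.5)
The plan is to work entirely with the explicit coordinate description of $\pi|_{\mathring{X}^{u}}$ recalled just before the statement. Since $u\in\NC_n^A$ means $u\cdot[r]=A$, the general fact that $\pi$ maps $\mathring{X}^u$ onto $\mathring{X}^A$ immediately gives $\pi(\mathring{X}^u_{\NC})\subseteq \pi(\mathring{X}^u)=\mathring{X}^A$, which is the first claim. Moreover $\pi|_{\mathring{X}^u}$ is a coordinate projection. It keeps exactly the coordinates indexed by $(i,j)\in\inv{u}$ with $u^{-1}(j)\le r<u^{-1}(i)$, which are precisely the $(i,j)\in\inv{u}\cap\inv{A}$, and it sends them to the corresponding coordinates of $\mathring{X}^A$. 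Restricting to the coordinate subspace $\mathring{X}^u_{\NC}$, whose free coordinates are indexed by $\invnc{u}$, the map $\pi|_{\mathring{X}^u_{\NC}}$ is again a coordinate projection. It is linear onto the coordinate subspace of $\mathring{X}^A$ indexed by $\invnc{u}\cap\inv{A}$, and its fibers are affine spaces of dimension $|\invnc{u}\setminus\inv{A}|$.

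For (1), take $u=z_A$. By Lemma~\ref{le:invnc_za} (and the remark following Remark~\ref{rem:splitmerge}) we have $\invnc{z_A}\subseteq\inv{A}$, so no coordinate is killed and the fibers are points. By Theorem~\ref{thm:EdgesinQJ}, $\invnc{z_A}=\invq{A}$, so the image is exactly the coordinate subspace of $\mathring{X}^A$ indexed by $\invq{A}$, namely $\mathring{X}^A_{\NC}$ by Definition~\ref{de:quasisymmetric_schubert_cell}. One bookkeeping point needs checking here: position $(i,u^{-1}(j))$ in the flag chart must match position $(i,k)$ with $a_k=j$ in the Grassmannian chart, up to the column reordering by $\GL_r$ that puts the $1$'s of the projected $n\times r$ block in increasing rows. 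This is what the recalled description asserts, and the reordering is a permutation of columns, so it preserves coordinate subspaces. Hence $\pi|_{\mathring{X}^{z_A}_{\NC}}$ is an isomorphism onto $\mathring{X}^A_{\NC}$.

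For (2), it suffices to show that if $u\in\NC_n^A$ and $u\ne z_A$, then $\invnc{u}\not\subseteq\inv{A}$, i.e. some noncrossing inversion $(i,j)$ of $u$ has $u^{-1}(j)$ and $u^{-1}(i)$ on the same side of $r$. By Lemma~\ref{le:welldefined} there is $k\in\desnc{u}$ with $k\ne r$, because $u\notin\qgrass{r,n}$ ($z_A$ being the unique quasigrassmannian element of the fiber). Set $\tau=us_ku^{-1}=(u(k+1)\,u(k))$. Since $k\in\des{u}$, $\tau\in\inv{u}$. Also $\tau u=us_k\in\NC_n$, so $\tau\in\invnc{u}$. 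The positions are $u^{-1}$ of its entries, namely $k$ and $k+1$. As $k\ne r$, these lie on the same side of $r$, so $\tau\notin\inv{A}$, and the coordinate indexed by $\tau$ is collapsed by $\pi$. The fibers therefore have dimension at least $1$.

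I expect the main obstacle to be purely notational: making sure the left-inversion convention $(i,j)$ with $\ast$ at $(i,u^{-1}(j))$ aligns with the descent/transposition argument, and that $\invnc{u}\cap\inv{A}$ really corresponds to the columns $\le r$. I would verify this first on the example $w=53214$ before writing the argument.
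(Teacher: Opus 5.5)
Your proof is correct and is essentially the paper's argument. The inclusion and part (1) come from the coordinate description of $\pi$ together with $\invnc{z_A}=\invq{A}\subseteq\inv{A}$. Part (2) comes from a noncrossing descent $k\neq r$ supplied by Lemma~\ref{le:welldefined}, whose inversion $(u(k+1),u(k))\in\invnc{u}$ sits in positions $k,k+1$ on the same side of $r$. You phrase this correctly via positions; the paper's text phrases it via the values $u(k),u(k+1)$, which need not lie on the same side of $r$ (e.g.\ $u=321$, $r=1$).

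One input that both you and the paper use without proof is that $\pi$ acts as a coordinate projection. This is false on the full cell $\mathring{X}^u$: for $u=321$, $r=2$, the image coordinate indexed by $(1,3)$ is $a-bc$, where $a,b,c$ are the coordinates indexed by $(1,3),(1,2),(2,3)$. It is true on $\mathring{X}^u_{\NC}$, though. If $(i,j)\in\invnc{u}$ then $i$ is not the maximum of its cycle, so the pivot column of row $i$ carries no noncrossing $\ast$, and clearing row $i$ has no side effects.
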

\begin{proof}
By definition $u \in \NC_{n}^{A}$ has $u \cdot [r] = A$, so the remarks above imply that $\pi(\mathring{X}^{u}_{\NC}) \subseteq \mathring{X}^{A}$. 

If $u = z_{A}$, Theorem~\ref{thm:EdgesinQJ} tells us that $\invnc{u} = \invq{A}$, so $\mathring{X}^{u}_{\NC}$ contains none of the linear subspaces which generate $\ker(\pi|_{\mathring{X}^{u}})$ and maps onto $\mathring{X}^{A}_{\NC}$.

If $u \neq z_{A}$, we show that $X^{u}_{\NC}$ intersect the kernel $\ker(\pi|_{X^{u}})$ nontrivially.  
Indeed, by Lemma~\ref{le:welldefined} there is some $i \in \desnc{u} \setminus \{r\}$, and $(u(i+1), u(i)) \in \invnc{u}$ with $u(i+1), u(i) \le r$ or $r < u(i+1), u(i)$.  Then the corresponding coordinate subspace in $\mathring{X}^{u}_{\NC}$ is in the kernel of $\pi|_{X^{u}}$.  
\end{proof}
\begin{eg}
Let $\lambda=(3,3,2)$. Then $A=\{3,5,6\}$,  $z_A=653241$, and the noncrossing inversions are $\invnc{z_A}=\{(1,6),(1,5),(4,5),(2,5),(2,3)\}$. We then compute $$\{(i,z_A^{-1}(j))\suchthat (i,j)\in \invnc{z_A}\}=\{(1,1),(1,2),(4,2),(2,2),(2,3)\},$$ so the associated noncrossing Schubert cell is given by
$$\mathring{X}^{z_A}_{\NC}=\begin{bmatrix}
\ast&\ast&\boxed{0}&\boxed{0}&\boxed{0}&1\\
\boxed{0}&\ast&\ast&1&0&0\\
\boxed{0}&\boxed{0}&1&0&0&0\\
\boxed{0}&\ast&0&0&1&0\\
\boxed{0}&1&0&0&0&0\\
1&0&0&0&0&0\end{bmatrix}\cong \mathbb{A}^5.$$
The column span of the first $3$ columns then gives the isomorphic projection to $\mathring{X}^{\lambda}_{\NC}=\Gr(3;6)$. This agrees with the earlier construction: $\inv{A}\setminus\invq{A}=\{(2,6),(4,6),(1,3)\}$ and
$$
\begin{tikzpicture}[scale=0.5,baseline=-.3cm]

\node[above] at (.5,1.1) {1};
\node[above] at (1.5,1.1) {2};
\node[above] at (2.5,1.1) {4};

\node[left] at (-.1,.5) {6};
\foreach \x in {0,1,2} {
    \draw (\x,0) rectangle ++(1,1);
}

\node[left] at (-.1,-.5) {5};
\foreach \x in {0,1,2} {
    \draw (\x,-1) rectangle ++(1,1);
}

\node[left] at (-.1,-1.5) {3};
\draw (0,-2) rectangle ++(1,1);
\draw (1,-2) rectangle ++(1,1);

\node at (0.5,0.5) {$\bullet$};
\node at (0.5,-0.5) {$\bullet$};
\node at (1.5,-0.5) {$\bullet$};
\node at (1.5,-1.5) {$\bullet$};
\node at (2.5,-0.5) {$\bullet$};
\end{tikzpicture}
\qquad\to\qquad
\mathring{X}^A_{\NC}\ = \ 
\begin{matrix} 3\\ 5\\ 6 \end{matrix}
\begin{bmatrix}\boxed{0}&\ast&1&0&0&0\\
\ast&\ast&0&\ast&1&0\\
\ast&\boxed{0}&0&\boxed{0}&0&1\end{bmatrix}^{\top}\cong \mathbb{A}^{5}.
$$
\end{eg}

We now consider the (noncrossing) Schubert cells under the action of the subgroup $T \subseteq \GL_{n}$ of diagonal matrices.  
We will denote by $\chi_i$ the character of $T$ that  maps $\mathrm{Diag}(h_{1}, \ldots, h_{n}) \in T$ to $h_{i}$, and write $\mathbb{C}_{\chi}$ for the corresponding one-dimensional representation. 

The left action of $T$ on $\GL_{n}/B$ makes $\mathring{X}^w$ into a linear $T$-representation, and this action scales the $\ast$ in entry $(i,w^{-1}(j))$ by the character $\chi_{i}/\chi_{j}$.  We therefore have 
\[
\mathring{X}^w\cong \bigoplus_{(i,j)\in \inv{w}}\mathbb{C}_{\chi_i/\chi_j}
\qquad\text{and}\qquad
\mathring{X}^u_{\NC}\cong \bigoplus_{(i,j)\in \invnc{u}}\mathbb{C}_{\chi_i/\chi_j}.
\]
The $T$ action on Grassmannian Schubert cells is similar: for $A \in \binom{[n]}{r}$,  $T$ acts on $\mathring{X}^A$ 
by scaling the $\ast$ corresponding to $(i, j) \in \inv{A}$ by $\chi_{i}/\chi_{j}$, so that
\[
\mathring{X}^A\cong \bigoplus_{(i,j)\in \inv{A}}\mathbb{C}_{\chi_i/\chi_{j}}
\qquad\text{and}\qquad
\mathring{X}^A_{\NC}\cong \bigoplus_{(i,j)\in \invq{A}}\mathbb{C}_{\chi_i/\chi_j}.
\]
The projection map $\pi:\fl{n}\to \Gr(r;n)$ induces maps $\mathring{X}^w\to \mathring{X}^{w\cdot [r]}$ and $\mathring{X}^u_{\NC}\to \mathring{X}^A_{\NC}$ by sending $\mathbb{C}_{\chi_{i}/\chi_{j}}$ to either $\mathbb{C}_{\chi_{i}/\chi_{j}}$ or $0$ according to whether $(i,j)\in \inv{A}$.

\begin{proof}[Proof of Theorem~\ref{thm:everythingequal}]
By Proposition~\ref{prop:QGrcontainedPV} and Theorem~\ref{thm:posfibers} we know that 
\[
\pluckervanishing{QJ_{r,n}} 
\subseteq 
\bigsqcup \mathring{X}^\lambda_{\NC} 
\subseteq 
\bigcup X^\lambda_{\NC}
\subseteq 
\QGr(r;n),
\]
so what remains is to show that $\QGr(r;n)\subseteq \pluckervanishing{QJ_{r,n}}$.

We start by showing that any $1$-dimensional $T$-orbit closure in $\QGr(r;n)$ is of the form $\langle e_A,e_{B}\rangle$ with $AB\in E(QJ_{r,n})$. 
By definition $
\QGr(r;n)=\pi(\hhmp_n)=\bigcup_{u\in \NC_n} \pi(\mathring{X}^u_{\NC})$.    
Since no two characters of $\mathring{X}^A$ are linearly dependent, any $1$-dimensional $T$-orbit in $\pi(\mathring{X}^u_{\NC})$ is the $T$-orbit of an interior point $x$ on a coordinate line $\langle e_{A}, e_{B} \rangle$. 
Moreover $\pi|_{\mathring{X}^u_{\NC}}$ is a coordinate projection onto a coordinate subspace of $\mathring{X}^A_{\NC}$ for $A = u \cdot [r]$.
Using the coordinate projection property we can find a point $z\in \mathring{X}^u_{\NC}$ such that such that $\pi(z)=x$ and $z$ lies in the interior of a coordinate line connecting $u$ to some $(i\,j)u$ with $(i,j)\in \invnc{u}$. 
We therefore have 
\[
\overline{T\cdot x}=\langle e_A,e_{B}\rangle, \qquad A=u \cdot [r],\quad B=(i\,j)u\cdot [r],
\]
 and $(i, j) \in \invq{A}$ by the definition of $\mathring{X}^A_{\NC}$.

To show $\QGr(r;n)\subset \pluckervanishing{QJ_{r,n}}$, by the definition of $\pluckervanishing{QJ_{r,n}}$ it suffices to show that for any $x\in \QGr(r;n)$ and any edge $AB\in E(J_{r,n})\setminus E(QJ_{r,n})$, we have $(\Delta_A\Delta_{B})(x)=0$.
Suppose on the contrary that $(\Delta_A\Delta_{B})(x)\ne 0$. 
Writing $B=(A\setminus j)\cup i$, consider the cocharacter $\rho: \mathbb{C}^{\ast} \to T$ defined by
\[
\rho(t)=(t^{-b_1},\ldots,t^{-b_n}) 
\qquad \text{where}\qquad
b_k=\delta_{k\in A}+\delta_{k\in B}.
\]
Then $x'=\lim_{t\to 0}\rho(t)x\in \QGr(r;n)$ because $\QGr(r;n)=\pi(\hhmp_n)$ is closed and $T$-invariant (as $\pi$ is proper and $T$-equivariant). 
This $x'$ lies in the interior of $\langle e_A,e_{A'}\rangle$, as in $\mathbb{P}(\Lambda^r\mathbb{C}^n)$ the cocharacter $\rho(t)$ scales each $e_{C}$ by $t^{-N_{C}}$ with 
\[
N_{C} =2|C\cap (A\cap B)|+|C \cap \{i,j\}|,
\]
a quantity that is maximized simultaneously at $C\in \{A,B\}$. 
But then $\langle e_A,e_{B}\rangle=\overline{T\cdot x'}\subset \QGr(r;n)$, contradicting the fact that every $1$-dimensional $T$-orbit closure in $\QGr(r;n)$ satisfies $AB\in E(QJ_{r,n})$.

For the affine paving statement, since  $\mathring{X}^\lambda_{\NC}\cong \mathbb{A}^{\OHL(\lambda)}$ it suffices to show that under the total ordering $\emptyset=\lambda_1,\lambda_2,\ldots$ we have $X_k\coloneqq \bigsqcup_{i\le k} \mathring{X}^{\lambda_i}_{\NC}$ is closed. This follows as $$X_k=\QGr(r;n)\cap \bigsqcup_{i=1}^k \mathring{X}^{\lambda_i}=\QGr(r;n)\cap\bigcup_{i=1}^k X^{\lambda_i},$$
where the last equality uses the fact that $X^{\lambda}=\bigsqcup_{\lambda'\subset \lambda} \mathring{X}^{\lambda'}$.
\end{proof}
See Figure~\ref{fig:paving_qgr24} which shows the affine paving 
\[
\QGr(2;4)=\underbrace{\mathring{X}^{12}}_{\mathbb{A}^0}\sqcup \underbrace{\mathring{X}^{13}}_{\mathbb{A}^1}\sqcup \underbrace{\mathring{X}^{14}}_{\mathbb{A}^2}\sqcup \underbrace{\mathring{X}^{23}}_{\mathbb{A}^2}\sqcup \underbrace{\mathring{X}^{24}}_{\mathbb{A}^3}\sqcup \underbrace{\begin{bmatrix}\ast&\ast&1&0\\\ast&\boxed{0}&0&1\end{bmatrix}^{\top}}_{\mathring{X}^{34}_{\NC}\cong \mathbb{A}^3}
\]
in terms of moment polytopes: the closures  of the top-dimensional cells $X^{24}_{\NC}=X^{24}$ and $X^{34}_{\NC}$ correspond respectively to the blue and yellow pyramids. 
\begin{figure}[!ht]
    \centering
    \includegraphics[width=\linewidth]{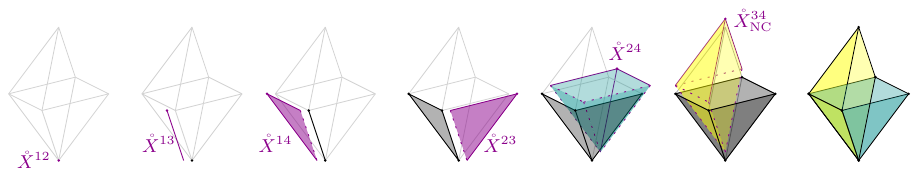}
    \caption{Paving $\QGr(2;4)$ with quasisymmetric Schubert cells. The $T$-orbits added by the affine cell $\mathring{X}^{ij}_{\NC}$ correspond to faces of the moment polytope for $\mathring{X}^{ij}_{\NC}$ that touch vertex $ij$.  The union of the new faces is homeomorphic to a half-open cube.}
    \label{fig:paving_qgr24}
\end{figure}

\section{Positroids, Richardsons, and Combinatorial Rigidity}
\label{sec:Positroid}
In this section we connect the quasisymmetric Schubert cycles to positroid varieties and Richardson varieties. Using these connections we show that the complex of moment polytopes determines the inclusion relations between the $T$-orbit closures in $\QGr(r;n)$.

\subsection{Positroids}
We now recall both geometric and combinatorial notions pertaining to total positivity as first described by Postnikov \cite{Po06} and further developed by Knutson--Lam--Speyer \cite{KLS13}; see also~\cite{Po18,Wi23} for a survey of the various mathematical fields this touches.
\begin{defn}
    A matrix $M\in Mat_{n\times r}^{\circ}$ with real entries is \emph{totally nonnegative} if all its maximal minors satisfy $\Delta_A(M)\ge 0$.
    Its \emph{positroid} is
\[
\mathcal{A}(M) \coloneqq \{\,A\in \tbinom{[n]}{r} \suchthat \Delta_A(M)>0\,\}.
\]
A subset $\mathcal{A}\subseteq \binom{[n]}{r}$ is a \emph{positroid} if $\mathcal{A}=\mathcal{A}(M)$ for some totally nonnegative $M$.
    The \emph{totally nonnegative Grassmannian} $\Gr(r;n)_{\ge 0}$ is the set of real linear spaces determined by the column spans of the totally nonnegative matrices. A \emph{positroid variety} is the closure in the complex Grassmannian $\Gr(r;n)$ of the points in $\Gr(r;n)_{\ge 0}$ with the same positroid.
\end{defn}

Positroids admit a combinatorial description in terms of Le diagrams \cite[Definition 6.1]{Po06}.
We isolate a subset of such diagrams that is relevant to us.
\begin{defn}
A subset $L \subseteq \lambda$ is called a \emph{noncrossing Le diagram} if it satisfies the following two conditions:
\begin{enumerate}
\item (\emph{Le condition}) If a square in $\lambda$ lies strictly below a square of $L$ in the same column and strictly to the right of a square of $L$ in the same row, then it also belongs to $L$.
\item (\emph{noncrossing condition}) For every box $b \in L$, either there is no box of $L$ strictly above $b$ in its column, or there is no box of $L$ strictly to the left of $b$ in its row.
\end{enumerate}
\end{defn}

\begin{rem}
    Every noncrossing Le diagram determines a unique \emph{noncrossing alternating forest} in the sense of~\cite{AN09}. 
    Suppose that $L\subset \lambda_A$ for $\lambda_A\in \Part_{r,n}$ and consider the subset of $\inv{A}$ determined by the boxes of $L$.  If we represent each inversion $(i, j)$ by an arc from $i$ to $j$, as shown in Figure~\ref{fig:ncle_to_ncaf}, the result is a noncrossing alternating forest.  
    If $L$ is maximal under inclusion, then the corresponding forest will be a \emph{noncrossing alternating tree} \cite{GGP97,Pos09} with $\OHL(\lambda)$-many arcs. 
\end{rem}

\begin{figure}[!ht]
    \centering
    \begin{tikzpicture}[baseline = -2cm]
    \begin{scope}[scale = 0.5, yshift = 2cm, xshift = 2.5cm]
    \path[fill = gray!30] (0, 0) rectangle (6, -4);
    \foreach \r\c in {1/5, 2/4, 3/1}{
        \foreach \i in {1,...,\c}{
            \draw[color = gray, fill=white] (\i-1, 1-\r) rectangle (\i, -\r);
        }
    }
    \foreach \x/\y/\l in {6/0/10, 5/1/8, 4/2/6, 3/2/5, 2/2/4, 1/3/2}{
        \draw (\x-0.5, -\y) node (\l) {};
        \draw (\l) node[color = blue] {$\scriptstyle \l$};
    }
    \foreach \x/\y/\l in {5/0/9, 4/1/7, 1/2/3, 0/3/1}{
        \draw (\x, -\y-0.5) node (\l) {};
        \draw (\l) node[color = red] {$\scriptstyle \l$};
    }
    \foreach \x/\y/\l in {1/0/in11, 1/1/in12, 1/2/in13, 2/1/in22, 3/1/in32, 4/1/in42, 5/0/in51}{
        \draw[fill] (\x-0.5, -\y-0.5) circle (3pt) node (\l) {};
    }
    \end{scope}
    \begin{scope}[scale = 0.5, yshift = -6cm]
    \foreach \x in {1, 3, 7, 9}{
        \draw[color=red] (\x, 0) node (\x) {$\scriptstyle \x$};
        }
    \foreach \x in {2, 4, 5, 6, 8, 10}{
        \draw[color=blue] (\x, 0) node (\x) {$\scriptstyle \x$};
        }
    \foreach \i/\j in {2/3, 2/7, 2/9, 4/7, 5/7, 6/7, 8/9}{
        \draw (\i) to[out = 90, in = 90] (\j);
        }
    \end{scope}
    \begin{scope}[scale = 1, xshift = 7cm, yshift = 0.5cm]
    \draw[dotted] (-0.5, -3.75) -- (-0.5, 0.5) -- (5.75, 0.5) -- (5.75, 0) -- (5, 0) -- (5, -1) -- (4, -1) -- (4, -2) -- (1, -2) -- (1, -3) -- (0, -3) -- (0, -3.75) -- (-0.5, -3.75);
    \foreach \x/\y/\l in {6/0/10, 5/1/8, 4/2/6, 3/2/5, 2/2/4, 1/3/2}{
        \draw[color = blue, fill = blue] (\x-0.5, -\y) circle (2pt) node (\l) {};
        \draw (\l) node[below, color = blue] {$\scriptstyle \l$};
    }
    \foreach \x/\y/\l in {5/0/9, 4/1/7, 1/2/3, 0/3/1}{
        \draw[color = red, fill = red] (\x, -\y-0.5) circle (2pt) node (\l) {};
        \draw (\l) node[right, color = red] {$\scriptstyle \l$};
    }
    \foreach \x/\y/\l in {1/0/in11, 1/1/in12, 1/2/in13, 2/1/in22, 3/1/in32, 4/1/in42, 5/0/in51}{
        \draw[fill] (\x-0.5, -\y-0.5) circle (2pt) node (\l) {};
    }
    \foreach \s/\t in {9/in51, in51/8, in51/in11, in11/in12, 7/in42, in42/6, in42/in32, in32/5, in32/in22, in22/4, in22/in12, in12/in13, 3/in13, in13/2}{
        \draw[thick, -{Latex[length=1mm,width=1.5mm]}] (\s) -- (\t);
    }
    \end{scope}
    \end{tikzpicture}
    \caption{Clockwise from the top left: a noncrossing Le diagram $L \subseteq \lambda = (5, 4, 1)$ marked by $\bullet$, the $\Gamma$-network for $L$, and the noncrossing alternating forest for $L$.}
    \label{fig:ncle_to_ncaf}
\end{figure}
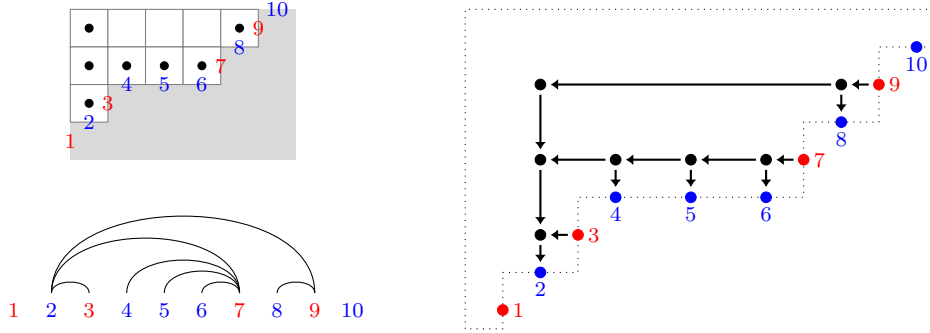

\begin{prop}
    For a noncrossing Le diagram $L$, let $\mathring{X}^\lambda_{L} \subseteq \mathring{X}^\lambda$ be the subspace with all coordinates outside $L$ equal to zero.  
    Then $X^{\lambda}_{L} = \overline{\mathring{X}^\lambda_{L}}$ is the positroid variety with Le diagram $L$.
\end{prop}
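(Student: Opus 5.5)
The proof has three steps: realize the cell as the image of a Le-diagram parametrization, identify its closure via Postnikov's theory, and check that the orientation conventions match.

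The first step uses Postnikov's description of the positroid cell attached to a Le diagram $L\subseteq\lambda$ \cite[Section 6]{Po06}. The cell $S_L\subseteq \Gr(r;n)_{\ge 0}$ is parametrized by the $\Gamma$-network of $L$: one places a positive weight $t_b$ on each box $b\in L$, and each Pl\"ucker coordinate is the generating function of vertex-disjoint path families in this network. The complexification of this map is a rational map $(\mathbb{C}^\ast)^{L}\to \Gr(r;n)$ whose closed image is the positroid variety $\Pi_L$. This uses the Knutson--Lam--Speyer result \cite{KLS13} that positroid varieties are irreducible of dimension $|L|$, and that they are the Zariski closures of the complexified cells. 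So it suffices to show that the image of $(\mathbb{C}^\ast)^{L}$ lands in $\mathring{X}^\lambda_L$ and is Zariski dense there. Once that holds, $\overline{\mathring{X}^\lambda_L}=\Pi_L$, because both sides are irreducible closed sets of dimension $|L|$, one containing the other.

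The second step uses the noncrossing condition. In a $\Gamma$-network a source (a north step, i.e.\ a row of $\lambda$) can reach a sink (an east step, i.e.\ a column) only by travelling left along its row to some box of $L$, and then turning or continuing. In general a path may pass through several boxes, so the matrix entry in position $(i,j)$ is a sum of monomials over paths. The noncrossing condition says that no box of $L$ has both a box of $L$ above it in its column and a box of $L$ to its left in its row. This forces each path to turn at most once, or equivalently, the alternating forest in Figure~\ref{fig:ncle_to_ncaf} has depth-one arcs in the appropriate sense. Consequently the boundary-measurement matrix, written in the canonical Schubert-cell form, has entry at box $b$ equal to a single monomial $\pm\prod t$ over a chain of boxes ending at $b$ when $b\in L$, and equal to $0$ when $b\notin L$. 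The Le condition guarantees that the hook-type path structure is consistent, so that no extra entries outside $L$ become nonzero.

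The third step is density. The monomial map $(\mathbb{C}^\ast)^L\to \mathbb{A}^{L}$ just described is triangular with respect to a suitable order on the boxes: each new $t_b$ appears for the first time in entry $b$, multiplied by earlier variables. It is therefore birational, hence dominant, onto $\mathring{X}^\lambda_L\cong\mathbb{A}^{|L|}$. Combined with the first step, this gives the proposition.

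I expect the main obstacle to be matching conventions. Postnikov uses a particular orientation of Le diagrams and of the boundary measurement map, and the paper transposes its matrices and reads boxes via inversions $(i,a_j)$. These conventions have to be aligned, and the key claim must be checked under them: that the noncrossing condition makes each entry a single monomial and produces no cancellation or spurious nonzero entries. If the path expansion is awkward, the fallback is to prove the monomial-entry statement directly. One would induct on $|L|$ by adding one box at an extremal corner (a box with no $L$-box to its right or below it in its hook). Adding such a box amounts to acting by a single root subgroup $x_{ij}(t)$ on the smaller configuration, which adds the new coordinate without disturbing the earlier ones.
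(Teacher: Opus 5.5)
Your proposal is correct and follows essentially the same route as the paper. Both use the $\Gamma$-network parametrization of the positroid cell for $L$, in which the noncrossing condition forces the unique path from row $j$ to column $i$ to run west along the row to the box of $L$ and then straight down, so that the chart entries are single monomials (the product of the $c_{b'}$ over boxes of $L$ weakly right of $b$ in its row, and $0$ off $L$) parametrizing the dense torus of $\mathring{X}^\lambda_L$. One small correction: the noncrossing condition, by forbidding south-to-west turns, is what prevents spurious nonzero entries outside $L$, not the Le condition, which is needed only so that Postnikov's parametrization applies.
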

\begin{proof}
Each Le diagram defines an acyclic directed planar ``$\Gamma$-network'' on the vertices $[n] \cup L$; see Figure~\ref{fig:ncle_to_ncaf}. Recall by Fact~\ref{fact:boringdeterminant} that in the coordinitization of $\mathring{X}^\lambda$, the $\ast$ corresponding to the inversion $(i,j)\in \inv{A_\lambda}$ is given by $\pm \Delta_{(A_{\lambda} \setminus j) \cup i}/\Delta_{A_{\lambda}}$. The positroid variety associated to $L$ is the closure of a subset of $\mathring{X}^\lambda$  parametrized by $\vec{c} \in (\CC^*)^{L}$, whose coordinates are uniquely determined by
\[
\Delta_{(A_{\lambda} \setminus j) \cup i}/\Delta_{A_{\lambda}} = \sum_{P \::\: j \to i} \mathrm{wt}_{P}(\vec{c}),
\]
where the sum above is over directed walks $P$ from $j$ to $i$ in the network and $\mathrm{wt}_{P}(\vec{c})$ is the product of $c_{b}$ for each $b \in L$ entered on a horizontal edge in $P$.  
If $L$ is noncrossing, there is a walk $P: j \to i$ only if $(i, j) \in \invq{A}$, in which case $P$ is unique and $\mathrm{wt}_{P}(\vec{c})$ is the product of $c_{b}$ weakly right of the box corresponding to $(i, j)$.  
This parametrizes a dense subset of $\mathring{X}^\lambda_{L}$ (where the $\ast$ are nonzero), so the closure $X^{\lambda}_{L}$ is the positroid variety associated to $L$.  
%
\end{proof}

Every $T$-orbit in $\QGr(r;n)$ is obtained by taking the $0,1,\ast$ chart for some $X^\lambda_{\NC}$ and replacing each $\ast$ with either a $\times$ to represent a nonzero indeterminate $\mathbb{C}^*$ or a $0$. Furthermore, it is immediate to check that the inversions in $\QGr(r;n)$ satisfy the noncrossing Le condition. We therefore conclude the following.
\begin{cor}
\label{cor:positroid varieties}
    The Le diagram of the associated positroid varieties in $\QGr(r;n)$ are precisely the noncrossing Le diagrams associated to the subset of $\mathring{X}^\lambda_{\NC}$ occupied by $\times$ as described above.
\end{cor}

\subsection{Projected and Translated Richardsons}
\label{subsec:projandtrans}

Let \(w_o \in S_n\) be the longest permutation. 
For \(u \le v\) in Bruhat order, the \emph{Richardson variety} \(X^v_u \subset \fl{n}\) is defined by
\[
X^v_u = X^v \cap w_o X^u,
\]
the intersection of a Schubert and an opposite Schubert variety. In the Grassmannian, one defines analogously the Richardson varieties \(X^\nu_\eta = X^\nu \cap w_o X^\eta\) for \(\eta \subset \nu\).

We describe two ways in which \(X^\lambda_{\NC}\) is related to Richardson varieties.
First, by \cite{KLS13, KLS14}, each positroid stratum \(X^\lambda_{\NC} \subset \Gr(r;n)\) is a \emph{projected Richardson variety} of the form
\[
X^\lambda_{\NC} = \pi\bigl(X^{w_\lambda}_u\bigr),
\]
where \(\pi|_{X^{w_\lambda}_u}\) is birational onto its image. 
Here $w_{\lambda}$ is the $r$-Grassmannian permutation $w_A$ for $A\in \binom{[n]}{r}$ corresponding to $\lambda$, introduced in Section~\ref{sec:combinatorics_gr}.
The permutation \(u\) is read from the Le diagram as a pipe dream \cite[Section 19]{Po06}: replace each box of \(L\) with an elbow tile and each box of \(\lambda \setminus L\) with a cross tile, then follow the strands from the south-east boundary to the north-west to obtain \(u\) in one-line notation. See Figure~\ref{fig:le_to_perm}, where this procedure yields \(u = 123485679\,10\).




\begin{figure}[!ht]
    \centering
    \includegraphics[scale=1]{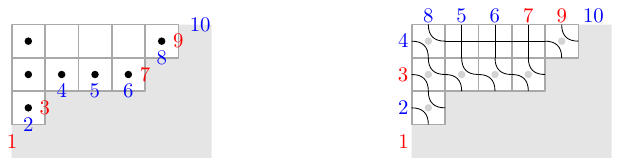}
    \caption{The Le diagram determining the permutation $u$ via a pipe dream construction}
    \label{fig:le_to_perm}
\end{figure}

\begin{cor}
    $X^\lambda_{\NC}$ is a projected Richardson variety $\pi(X^{w_{\lambda}}_u)$ for $u$ as described above.
\end{cor}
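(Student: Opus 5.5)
The plan is to obtain this corollary by chaining three inputs: the positroid identification of $X^\lambda_{\NC}$, the Knutson--Lam--Speyer description of positroid varieties as projected Richardson varieties, and Postnikov's pipe-dream reading of Le diagrams.

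First, I will pin down which Le diagram governs $X^\lambda_{\NC}$. Let $A$ correspond to $\lambda$, and let $L\subseteq\lambda$ be the set of boxes corresponding to $\invq{A}=\invnc{z_A}$, as described in Lemma~\ref{le:invnc_za} and the discussion after Remark~\ref{rem:splitmerge}. The boxes of $L$ are:
\begin{itemize}
\item the main-diagonal and subdiagonal boxes;
\item the boxes in the first $k$ rows with no box below them;
\item the boxes in the first $k$ columns with no box to their right.
\end{itemize}
I will check directly that $L$ is a noncrossing Le diagram. For the Le condition, a box lying below a box of $L$ and to the right of a box of $L$ is forced onto the diagonal or subdiagonal staircase, so it already belongs to $L$. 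The noncrossing condition holds because the row-type boxes have nothing of $L$ above them off the diagonal, and the column-type boxes have nothing of $L$ to their left.

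Since $\mathring{X}^\lambda_{\NC}=\mathring{X}^\lambda_L$ by Definition~\ref{de:quasisymmetric_schubert_cell}, the preceding Proposition (see also Corollary~\ref{cor:positroid varieties}) shows that $X^\lambda_{\NC}=\overline{\mathring{X}^\lambda_L}$ is the positroid variety with Le diagram $L$.

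Next, I will invoke the result of \cite{KLS13, KLS14}: the positroid variety with Le diagram $L\subseteq\lambda$ equals $\pi(X^{w_\lambda}_u)$, where $\pi$ restricted to this Richardson variety is birational onto its image. Here $w_\lambda$ is the Grassmannian permutation whose Schubert cell $\mathring{X}^{w_\lambda}$ projects isomorphically onto $\mathring{X}^\lambda$. The permutation $u\le w_\lambda$ is read from the pipe dream of $L$ as in \cite[Section 19]{Po06}, with elbows on $L$ and crosses on $\lambda\setminus L$.

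The main obstacle is matching conventions. The paper uses left inversions, the Schubert cells $BwB/B$, opposite Schuberts translated by $w_o$, and transposed matrices, whereas KLS and Postnikov use their own normalizations. To handle this, I would verify the correspondence on the $T$-fixed points. The $T$-fixed points of $\pi(X^{w_\lambda}_u)$ are the sets $v\cdot[r]$ for $u\le v\le w_\lambda$. I would check that these coincide with the vertices $C$ having $\Delta_C\not\equiv 0$ on $\mathring{X}^\lambda_L$, using Fact~\ref{fact:boringdeterminant} and the $\Gamma$-network path formula. Both varieties are irreducible, $T$-stable, and of the same dimension $|L|=\OHL(\lambda)$, since $\ell(w_\lambda)-\ell(u)$ counts the elbows in the reduced pipe dream. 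Matching the positroids (equivalently, the moment polytopes) therefore forces the two varieties to be equal.

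As a sanity check, I would compute the example of Figure~\ref{fig:le_to_perm} and confirm that it yields $u=123485679\,10$.
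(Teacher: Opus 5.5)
Your proposal is correct and takes the paper's route. Since $\mathring{X}^\lambda_{\NC}=\mathring{X}^\lambda_L$ for $L$ the noncrossing Le diagram formed by the boxes of $\invq{A}$, the preceding Proposition makes $X^\lambda_{\NC}$ a positroid variety, and Knutson--Lam--Speyer together with Postnikov's pipe-dream reading then give $\pi(X^{w_\lambda}_u)$. Your extra fixed-point check of conventions is not in the paper. It works because positroid varieties are determined by their positroids, not merely because the two sides are irreducible of the same dimension.

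One small correction, which you inherited from the paper's wording: the row-type (resp.\ column-type) boxes of $L$ must lie in columns (resp.\ rows) beyond the Durfee square. For example, with $\lambda=(4,4,4)$ the box $(3,1)$ has no box below it but is not in $L$. With the corrected description, no box of $\lambda$ has $L$-boxes both strictly above and strictly to its left, so the Le and noncrossing conditions hold vacuously.
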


Now we give an alternate description: $X^\lambda_{\NC}$ is in fact a \emph{translated Richardson variety}. 
This follows from an analysis carried out in \cite[Section 10]{nst_c} that applies to the noncrossing Schubert cycle $X^{z_A}_{\NC}$ where $z_A\in \qgrass{r,n}$ corresponds to $\lambda$.
We record the details of the construction in the terminology of the present paper. 

Let $\alpha\in \Comp_{r,n}$ be the composition associated to $\lambda$ under the bijection of Theorem~\ref{thm:parttocomp}.
Suppose $\eta/\nu$ is the skew shape associated to the ribbon $\alpha$ (i.e. $\eta/\nu$ is a rimhook with $\eta_i-\nu_i=\alpha_i$). We choose $\eta$ and $\nu$ so that $\eta/\nu$ has its extreme boxes in the first column and first row. 
Set $w=12\cdots (r-\ell(\alpha))r(r-1)\cdots (r-\ell(\alpha)+1)(r+1)(r+2)\cdots$, and suppose further that $w_\eta$ and $w_\nu$ are the $r$-Grassmannian permutations associated with $\eta$ and $\nu$ respectively.
Then for $(x,y)=(w_{\nu}w,w_\eta w)$ we have that
$$
X^{z_A}_{\NC}=x^{-1}X^y_x.
$$ 
As discussed in \cite[Proposition 10.5]{nst_c}, since the same  $w\in S_r\times S_{n-r}$ is applied on the right to both $w_{\mu}$ and $w_{\eta}$ to obtain $x,y$, we have \[
\pi(X^y_x)=\pi(X^{w_\eta}_{w_\nu})=X^\eta_\nu.
\]
We thus obtain the following.

\begin{thm}
\label{thm:translatedRich}
With notation as before, we have the equality $X^\lambda_{\NC}=x^{-1}X^{\eta}_{\nu}$.
\end{thm}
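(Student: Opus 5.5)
The plan is to push the flag-variety identity $X^{z_A}_{\NC}=x^{-1}X^y_x$ forward along $\pi:\fl{n}\to\Gr(r;n)$. Two facts drive this. First, $\pi$ is $\GL_n$-equivariant, so it commutes with left translation by the permutation matrix $x^{-1}$. Second, $\pi(X^y_x)=X^\eta_\nu$ by \cite[Proposition 10.5]{nst_c}. Combining them gives
\[
\pi(X^{z_A}_{\NC})=\pi(x^{-1}X^y_x)=x^{-1}\pi(X^y_x)=x^{-1}X^\eta_\nu .
\]
It therefore suffices to prove $\pi(X^{z_A}_{\NC})=X^\lambda_{\NC}$, where $\lambda=\lambda_A$ and $\alpha=\alpha_A$ correspond under Theorem~\ref{thm:parttocomp}.

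For that identity I would use Theorem~\ref{thm:posfibers}(1): $\pi$ restricts to an isomorphism $\mathring{X}^{z_A}_{\NC}\to\mathring{X}^A_{\NC}=\mathring{X}^\lambda_{\NC}$. Since $\pi$ is proper, it is a closed map, so it sends the closure of $\mathring{X}^{z_A}_{\NC}$ onto the closure of its image:
\[
\pi\bigl(\overline{\mathring{X}^{z_A}_{\NC}}\bigr)=\overline{\pi(\mathring{X}^{z_A}_{\NC})}=\overline{\mathring{X}^\lambda_{\NC}}=X^\lambda_{\NC}.
\]
Substituting this into the display above yields $X^\lambda_{\NC}=x^{-1}X^\eta_\nu$.

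The substantive work is hidden in the input $X^{z_A}_{\NC}=x^{-1}X^y_x$ from \cite[Section 10]{nst_c}, and I expect the main obstacle to be checking that its conventions match ours. Specifically, I would need to confirm that the ribbon and composition attached to $z_A$ there agree with $\alpha_A$ from \eqref{eq:qgrass_to_comp}, and that the left-versus-right inversion conventions for noncrossing cells agree (compare the remark after Definition~\ref{def:invnc}).

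As a sanity check I would compare dimensions. The Richardson variety $X^\eta_\nu$ has dimension $|\eta/\nu|=|\alpha|$, and by Theorem~\ref{thm:parttocomp} this equals $\OHL(\lambda)=\dim X^\lambda_{\NC}$. I would also compare torus-fixed points on both sides. The fixed points of $X^\lambda_{\NC}$ are the vertices $C$ reachable from $A$ by descending paths in $QJ_{r,n}$, and they should match the set $x^{-1}\cdot[\nu,\eta]$ of translated Gale intervals. If a convention mismatch appears, I would test it on the example $\lambda=(3,3,2)$ computed above and adjust $w$ accordingly.
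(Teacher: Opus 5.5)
Your argument is correct and follows the paper's route exactly. You combine $X^{z_A}_{\NC}=x^{-1}X^y_x$ from \cite{nst_c} with the $\GL_n$-equivariance of $\pi$ and $\pi(X^y_x)=X^\eta_\nu$, and you identify $\pi(X^{z_A}_{\NC})=X^\lambda_{\NC}$ via Theorem~\ref{thm:posfibers}(1) and properness of $\pi$, a step the paper leaves implicit and you spell out correctly.

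One caution about your optional fixed-point sanity check: the $T$-fixed points of $X^\lambda_{\NC}$ are not all the vertices reachable from $A$ by descending paths in $QJ_{r,n}$. For instance, $\{2,3\}$ is reachable from $\{3,4\}$ through $\{2,4\}$ in $QJ_{2,4}$, yet $\Delta_{23}\equiv 0$ on $X^{34}_{\NC}$. So that test, as you formulated it, would report a spurious mismatch rather than a genuine convention error.
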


\begin{cor}
\label{cor:translatedRich}
    Every $T$-orbit closure in $\QGr(r;n)$ is a translated Richardson variety.
\end{cor}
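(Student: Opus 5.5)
The plan is to reduce an arbitrary $T$-orbit closure to the top-dimensional pieces $X^\lambda_{\NC}$, which are translated Richardson varieties by Theorem~\ref{thm:translatedRich}, and then use that $T$-orbit closures inside a translated Richardson variety are again translated Richardson varieties.

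First I would describe the $T$-orbits of $\QGr(r;n)$. By Theorem~\ref{thm:everythingequal}, $\QGr(r;n)=\bigsqcup_\lambda \mathring{X}^\lambda_{\NC}$. The $T$-action on $\mathring{X}^\lambda_{\NC}\cong\bigoplus_{(i,j)\in\invq{A}}\mathbb{C}_{\chi_i/\chi_j}$ is linear, so each $T$-orbit is a coordinate torus: choose $S\subseteq\invq{A}$, set the coordinates in $S$ to be nonzero and the others zero. Its closure is therefore $\overline{\mathring{X}^\lambda_{L}}$, where $L$ is the set of boxes of $\lambda$ corresponding to $S$. Next I would check that every such $L$ is a noncrossing Le diagram. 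This is already asserted before Corollary~\ref{cor:positroid varieties}, and it follows from the description of the boxes in $\invq{A}$ after Remark~\ref{rem:splitmerge}. There are two kinds of boxes: diagonal and subdiagonal boxes, and the bottom boxes of the first $k$ columns together with the rightmost boxes of the first $k$ rows. No box of this set lies strictly below one box of it and strictly right of another, so both conditions hold for every subset $S$.

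The key step is to realize each $\overline{\mathring{X}^\lambda_{L}}$ as $x'^{-1}X^{\eta'}_{\nu'}$ for a suitable permutation $x'$ and partitions $\nu'\subseteq\eta'$. I would try two routes.

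\begin{itemize}
\item \textbf{Route one (combinatorial).} Deleting boxes from the maximal noncrossing alternating tree of $\lambda$ gives a noncrossing alternating forest. Each of its components is again a ``ribbon'' tree on an interval of $[n]$, or a product of such trees on disjoint noncrossing intervals. Carrying out the \cite{nst_c} construction of Theorem~\ref{thm:translatedRich} on each component, I would get a product of translated ribbon Richardsons in smaller Grassmannians. These then embed as one translated Richardson variety using a block permutation in $S_r\times S_{n-r}$.
\item \textbf{Route two (more conceptual; I would try it first).} Use the general fact that a $T$-orbit closure inside a Richardson variety $X^\eta_\nu$ is a $T$-stable positroid-type subvariety. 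In a toric Richardson variety, every face of the moment polytope is the moment polytope of a smaller Richardson variety $X^{\eta'}_{\nu'}$ with $\nu\subseteq\nu'\subseteq\eta'\subseteq\eta$; this is the standard description of faces of Bruhat interval polytopes in the toric case. Translating by $x^{-1}$ then gives the claim for every orbit closure in $X^\lambda_{\NC}=x^{-1}X^\eta_\nu$.
\end{itemize}

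The main obstacle is justifying the face statement in Route two, namely that orbit closures of the toric variety $X^\eta_\nu$ are again Richardson varieties. I expect to argue it through the $\Gamma$-network parametrization of the proposition preceding Corollary~\ref{cor:positroid varieties}. Setting a parameter $c_b$ to $0$ there corresponds to passing to a sub-Le diagram, which is a positroid variety. Under the translation $x$, noncrossingness should carry this to a skew ribbon subshape, which is a disjoint union of ribbons and hence again gives a Richardson variety.
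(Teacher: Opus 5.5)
Your Route two is the paper's argument. The paper also reduces to the cells, writes $X^\lambda_{\NC}=x^{-1}X^\eta_\nu$ by Theorem~\ref{thm:translatedRich}, and then treats the statement about $T$-orbit closures inside toric Richardson varieties as a black box, citing \cite[Proposition 8.1]{bergeron2026coxeterflagvariety}.

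The gap is in the version of that fact you formulate and propose to prove. You claim that every face of the moment polytope of a toric $X^\eta_\nu\subseteq\Gr(r;n)$ is the moment polytope of some $X^{\eta'}_{\nu'}$ with $\nu\subseteq\nu'\subseteq\eta'\subseteq\eta$. In the Grassmannian this is false. Take $X^{(2,1)}=X^{24}_{\NC}=\{\Delta_{34}=0\}\subset\Gr(2;4)$. It is a toric Schubert (hence Richardson) variety whose moment polytope is the square pyramid with vertex set $\binom{[4]}{2}\setminus\{34\}$. Its facet $\{z_2=1\}$ has vertices $12,23,24$, i.e.\ the partitions $\emptyset,(1,1),(2,1)$. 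These do not form an interval in Young's lattice, since $(1)$ and $(2)$ are missing. The fixed-point set of every Grassmannian Richardson variety is such an interval. Hence the corresponding orbit closure $\{V\ni e_2\}\cong\PP^2$ is not a Richardson variety at all; it is only the $s_1$-translate of the Schubert variety $X^{(2)}=\{V\ni e_1\}$.

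The face theorem for Bruhat interval polytopes that you appeal to is a statement in $\fl{n}$. Pushing forward to $\Gr(r;n)$ only produces projected Richardson (positroid) varieties. What you need is that each orbit closure of a toric Richardson variety equals $y^{-1}X^{\eta'}_{\nu'}$ for some $y\in S_n$ depending on the orbit. This is harmless for the corollary, since translates compose, but it must be proved or cited in that form.

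Your $\Gamma$-network sketch does not supply it. Setting some $c_b=0$ only shows the orbit closure is a positroid variety, which is already Corollary~\ref{cor:positroid varieties}. The phrase ``noncrossingness should carry this to a skew ribbon subshape'' is exactly the missing content.

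Route one could become a genuinely different, self-contained proof, but as written it is inaccurate. Components of a subforest of the noncrossing alternating tree need not live on intervals. For $A=\{1,3,7,9\}$, deleting the arc $2$--$7$ from the tree in Figure~\ref{fig:ncle_to_ncaf} leaves nontrivial components on $\{2,3,8,9\}$ and $\{4,5,6,7\}$, with the second nested inside the first. The generic point is indeed a direct sum over components, so the orbit closure is a product of orbit closures in the smaller Grassmannians $\Gr(|A\cap V_i|;\CC^{V_i})$. However, making the $V_i$ consecutive requires a general element of $S_n$; here no element of $S_4\times S_6$ does it.

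You would also still have to prove that each component's full-dimensional orbit closure is a translated ribbon Richardson in its smaller Grassmannian. Equivalently, you need that after relabelling the component tree is of the form $\invq{A'}$, so that Theorem~\ref{thm:translatedRich} applies. This is the key claim again, and you assert it rather than argue it.

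Finally, your description of the boxes of $\invq{A}$ is off. Besides the diagonal and subdiagonal boxes, they are the bottom boxes of the columns $c>k$ and the last boxes of the rows $i>k$. With your description, for $\lambda=(2,2)$ you would include the box of the non-edge $(2,4)$. Even so, your conclusion that every subset of $\invq{A}$ is a noncrossing Le diagram is correct.
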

\begin{proof}
    This follows from Theorem~\ref{thm:translatedRich} and the fact that every $T$-orbit closure contained in a toric Richardson variety is a toric Richardson variety \cite[Proposition 8.1]{bergeron2026coxeterflagvariety}.
\end{proof}

    Figure~\ref{fig:translated_richardson} shows the relevant information for $\lambda=(4,3,1)\in \Part_{4,9}$ corresponding to  $\alpha=(2,1,3)\in \Comp_{4,9}$ under the bijection from Theorem~\ref{thm:parttocomp}. 

\begin{figure}[!ht]
    \centering
    \includegraphics[scale=1]{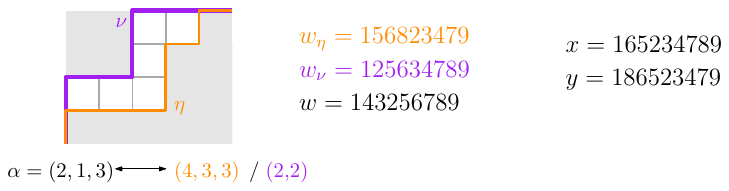}
    \caption{The data of $\eta$,  $\nu$, and $x$ determining the translated Richardson for $X_{\NC}^{(4,3,1)}$}
    \label{fig:translated_richardson}
\end{figure}

\subsection{Rigidity}

Using equivariant rigidity theorems we now show the following.
\begin{thm}
\label{thm:rigidQGr}
    Each $T$-orbit closure in $\QGr(r;n)$ is
\begin{enumerate}
    \item uniquely determined as a $T$-invariant subvariety of $\Gr(r;n)$ by its $T$-fixed point set, and
    \item containment of $T$-orbit closures in $\QGr(r;n)$ corresponds to containment of $T$-fixed points.
\end{enumerate}
\end{thm}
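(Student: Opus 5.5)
We will derive both parts from the translated Richardson description in Corollary~\ref{cor:translatedRich} together with known rigidity results for Richardson varieties. Translation by a permutation matrix $x^{-1}\in S_n\subset \GL_n$ normalizes $T$. It carries $T$-orbit closures to $T$-orbit closures and $T$-fixed points $\langle e_A\rangle$ to $\langle e_{x^{-1}A}\rangle$. So it suffices to prove the corresponding statements for toric Richardson varieties $X^\eta_\nu\subset\Gr(r;n)$, and then transport them.

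For part (1), let $Y=x^{-1}X^\eta_\nu$ be a $T$-orbit closure in $\QGr(r;n)$, with fixed point set $Y^T=x^{-1}\{C\suchthat \nu\subseteq\lambda_C\subseteq\eta\}$. Suppose $Z\subset\Gr(r;n)$ is any $T$-invariant irreducible subvariety with $Z^T=Y^T$. Translating back, $xZ$ is $T$-invariant with fixed points exactly the Gale interval $[\nu,\eta]$.
\begin{itemize}
\item A $T$-invariant irreducible closed $W$ whose fixed points all lie in the Gale interval $[\nu,\eta]$ is contained in $X^\eta\cap w_oX^\nu$. Indeed, by Borel's fixed point theorem applied to $B$- and $B^-$-orbit closures, $W$ lies in the Schubert variety indexed by the Gale-maximal fixed point it contains (since $\Delta_{A'}$ vanishes on $W$ for $A'$ outside the lower order ideal; this is a standard moment map/Pl\"ucker argument). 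Dually, $W$ lies in the opposite Schubert variety.
\item Hence $xZ\subseteq X^\eta_\nu$. Since $Z$ contains the fixed points $\nu$ and $\eta$, its moment polytope contains the whole Richardson polytope, so $\dim xZ\ge\dim X^\eta_\nu$. Irreducibility then gives equality.
\item If the dimension comparison is delicate, we will instead cite the equivariant rigidity of Richardson varieties from \cite{bergeron2026coxeterflagvariety} or \cite{KLS13}: a positroid variety is determined by its $T$-fixed points, namely its positroid. Since each orbit closure is a positroid variety by Corollary~\ref{cor:positroid varieties}, and positroid varieties are determined by their positroids, this gives (1) more directly.
\end{itemize}

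For part (2), one direction is trivial: $Y\subseteq Y'$ implies $Y^T\subseteq Y'^T$. Conversely, suppose $Y^T\subseteq Y'^T$. Each $T$-orbit closure is a positroid variety $\Pi_{\mathcal{A}}$ whose $T$-fixed points are its positroid $\mathcal{A}$. For positroid varieties, $\Pi_{\mathcal{A}}\subseteq\Pi_{\mathcal{A}'}$ if and only if $\mathcal{A}\subseteq\mathcal{A}'$ (KLS: positroid varieties are cut out by the vanishing of the Pl\"ucker coordinates not in their positroid). So $Y=\Pi_{Y^T}\subseteq \{\Delta_C=0,\ C\notin Y'^T\}\cap\ldots$ requires care, because a positroid variety is the vanishing locus of Pl\"ucker coordinates only set-theoretically within the whole stratification. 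We will use the KLS statement that $\Pi_{\mathcal{A}'}=\{\Delta_C=0\ \forall C\notin\mathcal{A}'\}$ set-theoretically. Then $Y^T\subseteq Y'^T$ gives that every $\Delta_C$ with $C\notin Y'^T$ vanishes at all fixed points of $Y$, hence on the toric variety $Y$, since the nonvanishing Pl\"ucker coordinates on a torus orbit closure are exactly its fixed points. Therefore $Y\subseteq Y'$.

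The main obstacle is justifying the set-theoretic equality between a positroid variety and the vanishing locus of the Pl\"ucker coordinates outside its positroid. This fails for general matroid strata but holds for positroids by Knutson--Lam--Speyer. That we are dealing with positroids at all depends on Corollary~\ref{cor:positroid varieties}, after translation by $x^{-1}$. We therefore work in the untranslated positroid coordinates, where every orbit closure in $\QGr(r;n)$ is already a positroid variety, which avoids the translation issue entirely.
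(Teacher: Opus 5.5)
Your argument is correct, but it takes a different route from the paper. The paper reduces both parts to a single containment statement: if $X$ is a $T$-orbit closure in $\QGr(r;n)$ and $W\subseteq \Gr(r;n)$ is any $T$-invariant subvariety with $W^T\subseteq X$, then $W\subseteq X$. It imports this statement from the equivariant rigidity results of BCP25, applied to either the positroid or the translated-Richardson description.

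You prove that containment directly from Grassmannian facts. The fixed points of $\overline{Tp}$ are the bases of the matroid of $p$, so $\Delta_C\equiv 0$ on a $T$-invariant $W$ whenever $C\notin W^T$. Meanwhile, Richardson varieties in $\Gr(r;n)$ (as intersections of Schubert and opposite Schubert varieties) and, by Knutson--Lam--Speyer, positroid varieties are set-theoretically cut out by the Pl\"ucker coordinates outside their fixed-point sets. You also make explicit a step the paper leaves implicit in (1): for irreducible $Z$, containment plus the generic-orbit dimension count forces equality. Your route is self-contained modulo standard facts and shows concretely why rigidity holds here. The paper's route is shorter and treats (1) and (2) uniformly through one cited statement.

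Some tidying is needed.
\begin{enumerate}
\item Your first bullet already proves the containment for translated Richardsons. Applied with $W=Y$ it gives (2) immediately, so the positroid detour is unnecessary; either model alone suffices for both parts.
\item The fallback third bullet does not prove (1) as written. ``Positroid varieties are determined by their positroids'' only gives uniqueness among positroid varieties. Statement (1) quantifies over all irreducible $T$-invariant subvarieties of $\Gr(r;n)$, so you would still need the Pl\"ucker containment and the dimension count.
\item Borel's fixed point theorem is not what does the work in the first bullet; your parenthetical Pl\"ucker argument is.
\item In the dimension step you need all of $Y^T$ to lie in $Z$, not just the two extreme fixed points. For example, in $\mathbb{P}^{n-1}$ the $T$-invariant line through $e_1$ and $e_n$ contains both extremes but is a proper orbit closure.
\end{enumerate}
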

\begin{proof}
    To show (1) and (2) it suffices to show that if $X$ is a $T$-orbit closure in $\QGr(r;n)$ and $Y\subset \Gr(r;n)$ is a $T$-invariant subvariety with $Y^T\subset X$ then $Y\subset X$. This follows either by the fact that these $T$-orbit closures are positroid varieties (Corollary~\ref{cor:positroid varieties}) by \cite[Corollary 6.4]{BCP25}, or the fact that they are translates of Richardson varieties (Corollary~\ref{cor:translatedRich}) by \cite[Theorem 6.3]{BCP25}.
\end{proof}
These facts imply that if a face is common to two moment polytopes then the corresponding $T$-orbit closures share the same sub-$T$-orbit closure, and so the ``moment complex'' of $\QGr(r;n)$ completely determines the combinatorics of how it is assembled as a toric complex.
\section{Equivariantly quasisymmetric polynomials}
\label{sec:EQSym}
This section summarizes the essential facts about fundamental and double fundamental polynomials from \cite{BGNST1, BGNST2}. 
In what follows we let $\xl=(x_1,x_2,\ldots)$ and $\tl=(t_1,t_2,\ldots)$ be infinite variable sets, and let $\xl_k=(x_1,\ldots,x_k)$ and $\tl_k=(t_1,\ldots,t_k)$ be their finite truncations, obtained by setting $x_{k+1}=\cdots =0$ and $t_{k+1}=\cdots =0$. The adjective \emph{non-equivariant} will mean that only $\xl$ variables are used, and equivariant will mean that $\xl$ and $\tl$ variables are used. To pass from equivariant to non-equivariant we will set $\tl=0$.

The \emph{quasisymmetric polynomials} $\qsym{n}\subset \mathbb{Z}[\xl_n]$ are those polynomials which satisfy the coefficient equality of $x_1^{a_1}\cdots x_k^{a_k}$ and $x_{i_1}^{a_1}\cdots x_{i_k}^{a_k}$ for every increasing sequence $i_1<\cdots<i_k$. 
There is an equivalent condition defining quasisymmetric polynomials that we now recall. 
Define the \emph{Bergeron--Sottile map} \cite{BS98,NST_a} to be the map
$$
\rope{i}f(\xl)=f(x_1,\ldots,x_{i-1},0,x_i,x_{i+1},\ldots).
$$
In particular because of the reindexing we have $\rope{i}f(\xl_n)\in \mathbb{Z}[\xl_{n-1}]$ for $1\le i \le n$.
Then $f(\xl_n)\in \qsym{n}$ if and only if $\rope{i}f(\xl_n)=\rope{i+1}f(\xl_n)$ for $i=1,\ldots,n-1$ \cite[Theorem 2.1]{NST_a}. 

In~\cite{BGNST1} we introduced with P. Nadeau a deformation of the above equivalent definition of quasisymmetry.  Define the \emph{equivariant Bergeron--Sottile operations} are the maps
\begin{align*}\rope{i}^-f(\xl;\tl)&=f(x_1,\ldots,x_{i-1},t_i,x_{i},x_{i+1},\ldots;\tl)\\
\rope{i}^+f(\xl;\tl)&=f(x_1,\ldots,x_{i-1},x_{i},t_i,x_{i+1},\ldots;\tl).
\end{align*}
We say that $f(x_1,\ldots,x_r;\tl)$ is \emph{equivariantly quasisymmetric} if $\rope{i}^-f=\rope{i}^+f$ for $i=1,\ldots,r-1$.

\begin{defn}
Write $\eqsym{r}$ for the set of equivariantly quasisymmetric polynomials $f(\xl_r;\tl_r)$, and write $\eqsym{r}[t_{r+1},\ldots,t_n]\coloneqq \eqsym{r}\otimes \mathbb{Z}[t_{r+1},\ldots,t_n]$. 
\end{defn}

In analogy to the fundamental basis of $\qsym{r}$,~\cite{BGNST1} constructs a family of  \emph{double fundamental quasisymmetric polynomials $F_\alpha(\xl_r;\tl)$} indexed by compositions, which specialize to the fundamental polynomials recalled in the introduction when $\tl=0$.  
Here we consider only their truncations at $ t_{n+1}=t_{n+2}=\cdots=0$, which we denote by $F_\alpha(\xl_r;\tl_{n})$ (when $\alpha\in \Comp_{r,n}$ it turns out that $F_\alpha$ had no dependence on the variables outside of $\xl_r$ and $\tl_n$ before truncation).  

\begin{fact}[{\cite[Theorem 4.14]{BGNST1}}]
\label{fact:quasiFbasis}
The ring $\eqsym{r}[t_{r+1},\ldots,t_n]$ has a free $\mathbb{Z}[\tl_n]$-basis of double fundamental polynomials
$\{F_\alpha(\xl_r;\tl_n) \suchthat \ell(\alpha) \le r\}$.
\end{fact}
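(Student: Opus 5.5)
The plan is a graded Nakayama argument: reduce to the non-equivariant statement that $\{F_\alpha(\xl_r)\suchthat \ell(\alpha)\le r\}$ is a $\mathbb{Z}$-basis of $\qsym{r}$, which is Gessel's classical result recalled in the introduction. Grade $\mathbb{Z}[\xl_r;\tl_n]$ by total degree with $\deg x_i=\deg t_j=1$. The operators $\rope{i}^-$ and $\rope{i}^+$ substitute a $t$-variable for an $x$-slot, so they preserve this grading. Hence the module $K\coloneqq\eqsym{r}[t_{r+1},\ldots,t_n]$, which is the common kernel of the $\mathbb{Z}[\tl_n]$-linear maps $\rope{i}^--\rope{i}^+$ for $1\le i\le r-1$, is a graded $\mathbb{Z}[\tl_n]$-submodule.

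First I will check from the construction in \cite{BGNST1} that each $F_\alpha(\xl_r;\tl_n)$ is homogeneous of degree $|\alpha|$, lies in $K$, and satisfies $F_\alpha(\xl_r;0)=F_\alpha(\xl_r)$. Next I specialize $\tl=0$. This sends $\rope{i}^-$ to $\rope{i}$ and $\rope{i}^+$ to $\rope{i+1}$, so by \cite[Theorem 2.1]{NST_a} it gives a graded map $\varepsilon:K\to\qsym{r}$. The map $\varepsilon$ is surjective because it hits every $F_\alpha(\xl_r)$. Linear independence of the $F_\alpha(\xl_r;\tl_n)$ over $\mathbb{Z}[\tl_n]$ then follows. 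Take a nontrivial homogeneous relation $\sum_\alpha c_\alpha(\tl)F_\alpha=0$. Since $\mathbb{Z}[\xl_r,\tl_n]$ is a domain, we may divide the relation by the largest common power of a single $t_j$, so that some $c_\alpha(\tl)$ is not divisible by $t_j$. Specializing the remaining variables generically and setting $t_j=0$ then reduces the relation to Gessel's independence.

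Spanning is the main obstacle. By graded Nakayama, it suffices to show $K\cap(\tl)\mathbb{Z}[\xl_r;\tl_n]=(\tl)K$. Concretely, if $f\in K$ is homogeneous, set $c_\alpha\coloneqq[F_\alpha](f(\xl_r;0))$ and $g\coloneqq f-\sum_\alpha c_\alpha F_\alpha(\xl_r;\tl_n)$. Then $g\in K$ and $g(\xl_r;0)=0$, and we need $g=\sum_j t_jg_j$ with every $g_j\in K$; induction on degree then finishes the proof. The difficulty is that $K$ is a kernel of maps between free modules, so saturation only handles a single principal factor. Indeed, if $t_jh\in K$ then $h\in K$ because the target is torsion-free. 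But $g$ only decomposes as a sum over several $t_j$'s.

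My first attempt will kill the $t$-variables one at a time. Set $t_n,t_{n-1},\ldots$ to zero in turn, and at each stage show that the restricted module $K|_{t_{j+1}=\cdots=0}$ still satisfies the analogous surjectivity. That is, each $F_\alpha$ with some $t$-variables specialized remains equivariantly quasisymmetric in the remaining variables. This needs the truncation compatibility of the double fundamentals stated before the Fact. With that in hand, divisibility by one $t_j$ at a time applies, using the saturation property above.

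If this stalls, my fallback is a leading-term argument. Choose a term order in which $x$-monomials dominate, and show that the leading $x$-part of any $f\in K$ is quasisymmetric. This holds because substituting $t_i$ into a slot only lowers the $x$-degree. I would then subtract $\mathbb{Z}[\tl_n]$-multiples of the $F_\alpha$ to strictly decrease the leading term.
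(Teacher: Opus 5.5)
The paper gives no argument for this statement. It imports it from \cite{BGNST1}, where the $F_\alpha(\xl_r;\tl)$ arise as double forest polynomials of zigzag forests and their equivariant quasisymmetry comes out of that machinery. Your route is genuinely different and, once the independence step below is repaired, sound. You take as black-box inputs that each $F_\alpha(\xl_r;\tl_n)$ is homogeneous of degree $|\alpha|$, lies in $K$, and specializes to $F_\alpha(\xl_r)$ at $\tl=0$. You then deduce the basis property from Gessel's theorem by a deformation argument. This is more elementary and proves slightly more: any family of homogeneous equivariantly quasisymmetric lifts of the $F_\alpha(\xl_r)$ is automatically a $\mathbb{Z}[\tl_n]$-basis. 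Be aware, though, that the input $F_\alpha(\xl_r;\tl_n)\in K$ is itself a theorem of \cite{BGNST1}, not a formality. Your argument does not replace that part.

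Your independence step fails as written. After removing a power of $t_j$ and setting $t_j=0$, specializing the other $t$-variables generically does not produce the $F_\alpha(\xl_r)$, so Gessel's theorem does not apply. You must set every $t$-variable to zero. One way is to iterate the divide-and-specialize step over all $j$: each intermediate ring is still a domain, and at the end you get a nontrivial $\mathbb{Z}$-relation among the $F_\alpha(\xl_r)$.

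The simpler fix is the top $x$-degree comparison from your fallback, which is in fact the cleanest complete proof of both halves and makes the Nakayama discussion unnecessary. Suppose $f\in K$ has $x$-degree $d$ with top part $f_d$. Comparing the $x$-degree-$d$ parts of $\rope{i}^-f=\rope{i}^+f$ gives $\rope{i}f_d=\rope{i+1}f_d$. By the Bergeron--Sottile criterion applied to each $\tl$-coefficient, $f_d=\sum_{|\alpha|=d}c_\alpha(\tl)F_\alpha(\xl_r)$. Homogeneity together with $F_\alpha(\xl_r;0)=F_\alpha(\xl_r)$ says that $F_\alpha(\xl_r)$ is exactly the top $x$-degree part of $F_\alpha(\xl_r;\tl_n)$. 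Hence subtracting $\sum_\alpha c_\alpha F_\alpha(\xl_r;\tl_n)$ lowers the $x$-degree, which gives spanning by induction. Applied to a relation $\sum c_\alpha F_\alpha(\xl_r;\tl_n)=0$, the same comparison kills the coefficients with $|\alpha|$ maximal, which gives independence. This is the observation used in the proof of Theorem~\ref{thm:Qsymtensor}, run in the opposite direction.

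Your one-variable-at-a-time Nakayama route also works, but only if you set it up with kernels rather than images. Let $K^{(j)}\subseteq\mathbb{Z}[\xl_r,t_1,\ldots,t_j]$ be cut out by the operators $\rope{i}^--\rope{i}^+$, with $t_i$ replaced by $0$ for $i>j$. Then:
\begin{itemize}
\item the specialization $t_j\mapsto 0$ commutes with these operators, so it maps $K^{(j)}$ into $K^{(j-1)}$;
\item each $K^{(j)}$ is $t_j$-saturated;
\item $K^{(0)}=\qsym{r}$.
\end{itemize}
Induction on $j$ and on degree then gives spanning. The truncation remark preceding the Fact plays no role here. That the specialized $F_\alpha$ lie in $K^{(j)}$ follows directly from $F_\alpha\in K$.
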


The second result is one of the main theorems of~\cite{BGNST1}; we restate the special case for quasigrassmannian permutations only.  
Identifying $S_{n}$ as a subset of $S_{n+1}$ in the usual way makes the $r$-quasigrassmannian permutations into a tower
\[
\qgrass{r,r+1} \subseteq  \qgrass{r,r+2}\subseteq \cdots.
\]
Let $\qgrass{r,\infty}$ to be the union of all $\qgrass{r, n}$ under this identification.  
We then have the following consequence of the ``AJS--Billey''-type formula for double forest polynomials.

\begin{fact}[{\cite[Theorem 8.3]{BGNST1}}]
\label{fact:BruhatVanishing}
Let $u \in \qgrass{r,\infty}$, and let $\alpha$ be the composition corresponding to $u$ under the bijection of ~\eqref{eq:qgrass_to_comp}, which is unique up to trailing zeros.  
Then 
\[
F_{\alpha}(t_{u(1)}, t_{u(2)}, \ldots, t_{u(n)}; \tl_{n}) 
= 
\prod_{(i,j)\in \invnc{u}} (t_j-t_i),
\]
and if $w \in \qgrass{r,\infty}$ with $w \not\ge u$ in Bruhat order, then $F_{\alpha}(t_{w(1)}, t_{w(2)}, \ldots, t_{w(n)}; \tl_{n}) = 0$. 
\end{fact}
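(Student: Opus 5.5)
The plan is to deduce both claims from the interpolation (GKM-type) properties of double forest polynomials in \cite{BGNST1}, specialized to $u\in\qgrass{r,\infty}$. First I would identify $F_\alpha(\xl_r;\tl)$ with the double forest polynomial of the indexed forest attached to $u$. This uses the bijections of \cite[Def.~7.10]{BGNST1} and \cite[Theorem 8.3]{NST_a} whose composite is \eqref{eq:qgrass_to_comp}. I would then write $f(w)\coloneqq F_\alpha(t_{w(1)},\ldots,t_{w(n)};\tl_n)$ for $w\in\NC_n$. The goal becomes an upper-triangular statement: $f(w)=0$ unless $w\ge u$, and $f(u)=\prod_{(i,j)\in\invnc{u}}(t_j-t_i)$.

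The main tool would be the equivariant trimming operators $\mathsf{T}_i=(\rope{i}^+-\rope{i}^-)/(x_i-t_i)$-type operators from \cite{BGNST1}. On double forest polynomials these act by $\mathsf{T}_iP_F=P_{F/i}$ when $i$ is a terminal node of $F$, and by $0$ otherwise. I would translate them into a recursion on localizations along noncrossing descents. For $w\in\NC_n$ and $k\in\desnc{w}$ with $(a,b)=(w(k+1),w(k))$, the values $f(w)$ and $f(ws_k)$ should differ by $(t_b-t_a)$ times the localization of the trimmed polynomial at $ws_k$. This is the point where Fact~\ref{fact:ncdescent} enters: the condition $k\in\{w(k),w(k+1)\}$ is exactly what makes the substitution $x\mapsto t$ compatible with $\rope{k}^\pm$. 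I expect this compatibility check to be the main obstacle. I would first verify it on a single cycle $(b\,b-1\cdots a)$ and then extend it using disjointness of the blocks.

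Given the recursion, both claims follow by induction on $\ell(w)$ within $\NC_n$, reducing via Lemma~\ref{le:welldefined}. The base case is $w=\idem$. There $f(\idem)=F_\alpha(t_1,\ldots,t_r;\tl)=0$ for $\alpha\neq\emptyset$, since double fundamentals vanish at their own $\tl$-specialization because they have no constant term in $x-t$.

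For vanishing, take $w\not\ge u$. Each step either preserves the relation $\not\ge u$ or trims $\alpha$ to a composition whose quasigrassmannian element $u'$ satisfies $u'\not\le ws_k$ either. In both cases induction gives $0$. The Bruhat comparison needed here is Theorem~\ref{thm:ZaInv} together with the subword property.

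For the value at $u$, descend along noncrossing descents of $u$ itself; for the quasigrassmannian $u$ the only one is $r$. Each step peels off one factor $(t_j-t_i)$ with $(i,j)\in\invnc{u}$, and this matches the split/merge list of Lemma~\ref{le:invnc_za}. The degree count $|\alpha|=|\invnc{u}|=\OHL(\lambda)$ from Theorem~\ref{thm:parttocomp} ensures that no extra factors or lower-order terms survive.
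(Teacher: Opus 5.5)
The paper gives no argument here; it imports the statement from \cite[Theorem 8.3]{BGNST1}. Rederiving it by a trimming-operator induction is viable, and your recursion has the right shape. The "compatibility check" you single out as the main obstacle is only a one-line substitution, though, and the trimmed term is not localized at $ws_k$.

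For $k\in\desnc{w}$, Fact~\ref{fact:ncdescent} puts $t_k$ in position $k$ or $k+1$ of $(t_{w(1)},\ldots,t_{w(n)})$. Deleting that entry gives a vector $y$, and substituting directly gives $f(w)-f(ws_k)=(t_{w(k)}-t_{w(k+1)})\,(\mathsf{T}_kF_\alpha)(y)$ with $\mathsf{T}_k=(\rope{k}^+-\rope{k}^-)/(x_k-t_k)$. No cycle-by-cycle analysis is needed.

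For $k\neq r$ the trimmed term vanishes, so this just says $f(w)=f(ws_k)$, which is Lemma~\ref{lem:fiber_calculation}. For $k=r$, let $\widehat w\in\NC_{n-1}$ be obtained by erasing the letter $r$ from the one-line notation of $w$ and standardizing. Then $y$ is the localization at $\widehat w$ in the alphabet $\tl$ with $t_r$ removed. For the induction to close you therefore need two facts:
\begin{enumerate}
\item $\mathsf{T}_rF_\alpha$ is the double fundamental of the trimmed composition in that shifted alphabet.
\item Its quasigrassmannian element is $\widehat u$, taken for $r$ or for $r-1$ according as $r$ is the minimum of a nontrivial block of $u$ or a fixed point of $u$.
\end{enumerate}

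The genuine gap is in the vanishing step. The induction requires: if $r\in\desnc{w}$ and $\widehat w\ge\widehat u$ in $S_{n-1}$, then $w\ge u$ in $S_n$. Your condition ``$u'\not\le ws_k$'' compares the wrong permutations. Theorem~\ref{thm:ZaInv} plus the subword property does not supply the right comparison, for two reasons. First, the induction runs through non-quasigrassmannian $w$. Second, even if you return to quasigrassmannian elements via Lemma~\ref{lem:fiber_calculation}, $\widehat w$ and $\widehat u$ can be quasigrassmannian for different values ($r$ versus $r-1$), so no single Gale order compares them.

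The needed statement is true. Write $R_w(p,q)=\#\{j\le p: w(j)\ge q\}$. Then $R_w$ equals the reindexed $R_{\widehat w}$, plus $1$ exactly when $p$ is at least the position of the letter $r$ and $q\le r$. When that position differs between $w$ and $u$, only the row $p=r$ is affected, and it is settled using $w(r)>w(r+1)$. This lemma is the real content of the vanishing step, and your proposal does not contain it.

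The leading term has the same problem. The recursion gives $f(u)=f(us_r)+(t_{u(r)}-t_{u(r+1)})\cdot(\text{value at }\widehat u)$. Here $f(us_r)=0$ has to come from the vanishing part. You then need $\invnc{u}=\{(u(r+1),u(r))\}\sqcup\iota(\invnc{\widehat u})$, where $\iota$ raises values $\ge r$ by one, checked against Lemma~\ref{le:invnc_za}. A degree count cannot substitute for this identity.
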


\section{Cohomology}
\label{sec:Cohomology}

This section proves Theorems~\ref{introthm1} and~\ref{introthm4}, about the cohomology of $\QGr(r; n)$ and quasisymmetric polynomials.  
In order to do so, we first prove several results of independent interest about the equivariant cohomology ring $H_{T}^{\bullet}(\QGr(r; n))$ with respect to the standard torus $T = (\CC^{\ast})^{n}$. We identify the $T$-equivariant cohomology of a point with the polynomial ring $$H^\bullet_T(pt)=\operatorname{Sym}^\bullet \operatorname{Char}(T)=\mathbb{Z}[\tl_n],$$ where we take the standard convention used in double Schubert calculus that $t_i$ is the generator corresponding to $\chi_i^{-1}$, the inverse of the $i$'th standard character of $T$. This convention ensures for example that double Schur polynomials $s_\lambda(\xl_r;\tl_n)\in H^\bullet_T(\Gr(r;n))$ are $T$-equivariantly Kronecker dual to the $T$-equivariant homology classes of Grassmannian Schubert cycles $X^\lambda$ in $H^T_\bullet(\Gr(r;n))$.

\subsection{Graph Cohomology Rings}

Let $G$ be a finite simple graph with a labelling $\chi$ that assigns a linear form $\chi(uv) \in \ZZ[\tl_{n}]$ to each edge $uv \in E(G)$.  
The \emph{equivariant graph cohomology ring} associated to $G$ and $\chi$ is the $\ZZ[\tl_{n}]$-algebra 
\[
H^\bullet_T(G)\coloneqq \Big\{(f_v)_{v\in V(G)} \in \mathbb{Z}[\tl_{n}]^{V(G)} \;\Big|\; \chi(vw)\text{ divides } f_v-f_w\text{ for all }vw\in E(G)\Big\}.
\]
with pointwise addition, multiplication, and scaling.  We  define the \emph{singular graph cohomology ring} 
\[
H^{\bullet}(G)\coloneqq H^\bullet_T(G)\big/ (t_{1}, t_{2}, \ldots, t_{n}),
\]
where $t_{i}$ denotes the constant assignment in $H_{T}^{\bullet}(G)$ with value $t_{i}$.  

Goresky--Kottwitz--MacPherson~\cite{GKM98} show that graph cohomology rings compute the $T$-equivariant and singular cohomology rings of suitably nice $T$-spaces. 
For example, taking $G$ to be the Johnson graph $J_{r, n}$ with edge labelling 
\begin{equation}
\label{eq:edgelabels}
\chi(AB) = t_{i} - t_{j}
\qquad
\text{if $B = (A \setminus i) \cup j$ for $(i, j) \in \inv{A}$},
\end{equation}
produces $T$-equivariant and ordinary cohomology rings of the Grassmannian $\Gr(r; n)$, i.e.
$$H^\bullet_T(\Gr(r;n))\cong H^\bullet_T(J_{r,n})\qquad\text{and}\qquad H^\bullet(\Gr(r;n))\cong H^\bullet(J_{r,n}).$$

Recall from Section~\ref{sec:RecollGrass} that $V(J_{r, n})$ and $E(J_{r, n})$ can be identified with the $T$-fixed points and $T$-invariant curves in $\Gr(r; n)$.  
For $X \subseteq \Gr(r; n)$, the \emph{moment graph of $X$} is the subgraph of $J_{r, n}$ with vertex set $X^{T}$ and edges the $T$-invariant curves in $X$.  
By the proof of Theorem~\ref{thm:everythingequal}, the moment graph of $\QGr(r; n)$ is the quasisymmetric Johnson graph $QJ_{r, n}$.

\begin{thm}
\label{thm:gmk_presentation}
With the edge labellings from Equation~\eqref{eq:edgelabels},
\[
H^\bullet_T(\QGr(r;n))\cong H^\bullet_T(QJ_{r,n})\qquad\text{ and }\qquad H^\bullet(\QGr(r;n))\cong H^\bullet(QJ_{r,n}).
\]
\end{thm}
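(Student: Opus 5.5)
The plan is to apply the GKM localization theorem in a version valid for singular $T$-varieties with affine pavings by $T$-stable cells. The two hypotheses to check are \emph{equivariant formality}, which gives injectivity of the restriction to fixed points, and the fact that the image is cut out by the one-dimensional orbits. The moment graph computation is already done: by the proof of Theorem~\ref{thm:everythingequal}, the fixed points of $\QGr(r;n)$ are the $\langle e_A\rangle$ for $A\in\binom{[n]}{r}$, and its $T$-invariant curves are exactly the $\langle e_A,e_B\rangle$ with $AB\in E(QJ_{r,n})$. Each such curve $\mathbb{P}^1$ carries weight $\pm(t_i-t_j)$, matching \eqref{eq:edgelabels}, as computed from the character $\chi_i/\chi_j$ of the corresponding coordinate in $\mathring{X}^A$. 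Also, $\QGr(r;n)$ has only finitely many one-dimensional $T$-orbits, because no two characters of any $\mathring{X}^A$ are proportional.

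First, I will establish equivariant formality and the injectivity of $H^\bullet_T(\QGr(r;n))\to H^\bullet_T(\QGr(r;n)^T)=\mathbb{Z}[\tl_n]^{\binom{[n]}{r}}$ over $\mathbb{Z}$. Theorem~\ref{thm:everythingequal} provides a $T$-stable affine paving $\mathring{X}^A_{\NC}\cong\bigoplus_{(i,j)\in\invq{A}}\mathbb{C}_{\chi_i/\chi_j}$, with each cell a linear $T$-representation containing a unique fixed point. The standard argument (Arabia/Brion, or Tymoczko's treatment of paved varieties) then runs by induction along the closed filtration $X_k$. The long exact sequences in equivariant cohomology with compact supports split, because all cohomology is even and free. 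This gives that $H^\bullet_T(\QGr(r;n))$ is a free $\mathbb{Z}[\tl_n]$-module of rank $\binom{n}{r}$, and that localization to fixed points is injective.

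Second, I will identify the image with $H^\bullet_T(QJ_{r,n})$. Containment of the image in the GKM ring follows by restricting to each invariant curve $\mathbb{P}^1$, where the divisibility condition is classical. For the reverse inclusion I will construct a $\mathbb{Z}[\tl_n]$-basis of $H^\bullet_T(QJ_{r,n})$ by upper-triangular ``flow-up'' classes $\xi_A$. These are supported on $\{B\ge A\}$ in Gale order, with $\xi_A(A)=\prod_{(i,j)\in\invq{A}}(t_j-t_i)$, the equivariant Euler class of the normal space to the cell at $A$. Counting degrees and ranks then identifies the two rings.

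To produce the $\xi_A$, I will use Fact~\ref{fact:BruhatVanishing}: set $\xi_A(B)=F_{\alpha_A}(t_{z_B(1)},\dots,t_{z_B(n)};\tl_n)$. The vanishing for $z_B\not\ge z_A$, together with Theorem~\ref{thm:ZaInv}, gives the support condition. The diagonal value is $\prod_{\invnc{z_A}}=\prod_{\invq{A}}$ by Theorem~\ref{thm:EdgesinQJ}. For the GKM divisibility along $QJ$ edges, I would note that $F_\alpha(\xl_r;\tl_n)$ evaluated at $x_k=t_{w(k)}$ depends only on $w\cdot[r]$ for $w\in\NC_n^A$, by equivariant quasisymmetry together with the moves $w\mapsto ws_k$, $k\ne r$. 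An edge $AB$ of $QJ$ comes from Kreweras-adjacent $w,u=(i\,j)w$, and $f(t_{w})-f(t_{(i\,j)w})$ is divisible by $t_i-t_j$. Hence these restrictions lie in $H^\bullet_T(QJ_{r,n})$.

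The main obstacle is the integral statement that any GKM class is a $\mathbb{Z}[\tl_n]$-combination of the $\xi_A$. I would argue by descending induction on Gale order. Take a class $f$ vanishing below $A$. At $A$, all edges $AB$ with $(i,j)\in\invq{A}$ go to $B<A$, where $f(B)=0$, so each pairwise coprime linear form $t_j-t_i$ divides $f(A)$. Therefore $f(A)$ is a multiple of $\xi_A(A)$ in the UFD $\mathbb{Z}[\tl_n]$. Subtracting that multiple of $\xi_A$ continues the induction. The ordinary cohomology statement follows by tensoring with $\mathbb{Z}$ over $\mathbb{Z}[\tl_n]$, which is valid by freeness (equivariant formality).
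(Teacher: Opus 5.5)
Your outline (the paving gives freeness and injective localization, invariant curves put the image inside $H^\bullet_T(QJ_{r,n})$, triangularity gives equality) unpacks the ``good affine paving'' GKM criterion that the paper cites and checks. However, your final step does not close as written. You have an injection $\iota\colon H^\bullet_T(\QGr(r;n))\hookrightarrow H^\bullet_T(QJ_{r,n})$ of free graded $\mathbb{Z}[\tl_n]$-modules. You also have a flow-up basis $\{\xi_A\}$ of the target in the same degrees as the generators of the source. From this you conclude by ``counting degrees and ranks.'' Over $\mathbb{Z}[\tl_n]$ that inference is invalid: an injective map of free graded modules with equal graded ranks need not be surjective. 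Multiplication by $2$ on $\mathbb{Z}[\tl_n]$ already fails; the counting works degree by degree only over a field. Nothing in your argument shows that the $\xi_A$, which you build combinatorially from $F_{\alpha_A}$, are restrictions of classes on $\QGr(r;n)$. That is exactly what surjectivity requires, and the theorem is an integral statement.

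The repair uses what your first step already produces. Order the cells by a linear extension, with closed unions $X_k$ and $X_k\setminus X_{k-1}=\mathring{X}^A_{\NC}$. Map the equivariant Thom class of this cell through $H^\bullet_{T,c}(\mathring{X}^A_{\NC})\to H^\bullet_T(X_k)$, and lift it along the surjection $H^\bullet_T(\QGr(r;n))\to H^\bullet_T(X_k)$. The result is a class $\eta_A$ that vanishes at every fixed point of $X_{k-1}$. Its value at $A$ is the Euler class of the representation $\bigoplus_{(i,j)\in\invq{A}}\mathbb{C}_{\chi_i/\chi_j}$, which up to sign is $\prod_{(i,j)\in\invq{A}}(t_j-t_i)$. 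This is exactly the product of the labels of the down-edges at $A$ in $QJ_{r,n}$; it is the Euler class of the tangent space of the cell, not of a normal space. Now run your final induction with the geometric classes $\eta_A$ in place of the $\xi_A$. It shows every GKM class is a $\mathbb{Z}[\tl_n]$-combination of the $\iota(\eta_A)$, so $\iota$ is onto. The same triangularity is also what actually proves injectivity over $\mathbb{Z}$. With this change the double fundamental polynomials play no role in this theorem. In the paper they enter only afterwards, in Propositions~\ref{prop:welldefined} and~\ref{prop:flowupfundamental}, to identify a flow-up basis for Theorem~\ref{thm:QuotientPresentation}.
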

\begin{proof}
A standard application of GKM theory gives the above isomorphisms for any variety $X \subseteq \Gr(r; n)$ and its moment graph provided certain conditions hold.  
A simple statement of these conditions appears under the name``good affine paving'' in~\cite[\S11.1 items (1)--(3)]{BGNST1}, adapted from~\cite{MR2166181}.  

The affine paving of $\QGr(r;n)$ in Theorem~\ref{thm:everythingequal} is ``good'' in this sense, as each affine cell $\mathring{X}^{A}_{\NC}$ decomposes into $T$-representations with characters $\chi_{i}/\chi_{j}$ for $(i, j) \in \invq{A}$, no two characters in the same chart divide one another, and each standard character $\chi_{i}$ appears in each character with an exponent of $\pm 1$ or $0$.  
\end{proof}

\subsection{A map from quasisymmetric polynomials}

We now state an equivariant version of the first presentation in Theorem~\ref{introthm1}.  Recall the ring $\eqsym{n}$ and the polynomials $F_{\alpha}(\xl_{n}; \tl_{n})$ from Section~\ref{sec:EQSym}.  

\begin{thm}
\label{thm:QuotientPresentation}
The variety $\QGr(r; n) \subseteq \Gr(r; n)$ has
\[
H^{\bullet}_{T}(\QGr(r; n)) 
\cong \frac{\eqsym{r}[t_{r+1},\ldots,t_n]}{\langle F_\alpha(\xl_{r}; \tl_{n})\suchthat \alpha \not\in \Comp_{r,n}\rangle},
\]
and the ideal has a free $\mathbb{Z}[\tl_{n}]$-basis $\{F_{\alpha}(\xl_{r}; \tl_{n}) \suchthat \alpha \notin  \mathrm{Comp}_{r, n}\}$.  
As a consequence, the first presentation of Theorem~\ref{introthm1} holds.  
\end{thm}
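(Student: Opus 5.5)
We prove the statement by building an explicit localization map and checking it against the GKM description of Theorem~\ref{thm:gmk_presentation}. Define
\[
\Phi:\eqsym{r}[t_{r+1},\ldots,t_n]\to \mathbb{Z}[\tl_n]^{\binom{[n]}{r}},\qquad f\mapsto \big(f(t_{z_A(1)},\ldots,t_{z_A(r)};\tl_n)\big)_{A},
\]
and show it lands in $H^\bullet_T(QJ_{r,n})$, is surjective, and has kernel spanned by $\{F_\alpha(\xl_r;\tl_n)\suchthat\alpha\notin\Comp_{r,n}\}$.

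\textbf{Independence of representative and the GKM condition.} Since $z_A\cdot[r]=A$, the evaluation of a symmetric $f$ at $z_A$ would only depend on $A$. For merely equivariantly quasisymmetric $f$, we first prove that $f(t_{u(1)},\ldots,t_{u(r)})$ is the same for every $u\in\NC_n^A$. By Lemma~\ref{le:welldefined}, any two elements of the fiber are connected by moves $u\mapsto us_k$ with $k\in\desnc{u}\setminus\{r\}$. For $k>r$ the evaluation is unchanged trivially. For $k<r$, Fact~\ref{fact:ncdescent} gives $k\in\{u(k),u(k+1)\}$, and the identity $\rope{k}^-f=\rope{k}^+f$ is exactly the statement that swapping $t_{u(k)}$ and $t_{u(k+1)}$ in positions $k,k+1$ does not change $f$ when one of them is $t_k$. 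This step needs care with how $\rope{k}^\pm$ reindex the remaining variables, and I would check it against \cite{BGNST1}, where this invariance underlies the AJS--Billey formula. Now take an edge $AB\in E(QJ_{r,n})$. By definition there are Kreweras-adjacent $w\in\NC_n^A$ and $u=(i\,j)w\in\NC_n^B$. Evaluating at $w$ and at $u$ differs by the transposition $t_i\leftrightarrow t_j$, so $t_i-t_j$ divides the difference. Hence $\Phi$ lands in $H^\bullet_T(QJ_{r,n})$.

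\textbf{Basis and triangularity.} By Fact~\ref{fact:quasiFbasis}, the source is free over $\mathbb{Z}[\tl_n]$ with basis $\{F_\alpha\suchthat\ell(\alpha)\le r\}$. By Fact~\ref{fact:BruhatVanishing}, for $\alpha\in\Comp_{r,n}$ corresponding to $z_B$:
\begin{itemize}
\item $\Phi(F_\alpha)_B=\prod_{(i,j)\in\invnc{z_B}}(t_j-t_i)$;
\item $\Phi(F_\alpha)_A=0$ unless $z_A\ge z_B$, which by Theorem~\ref{thm:ZaInv} means unless $A\ge B$ in Gale order.
\end{itemize}
For $\alpha\notin\Comp_{r,n}$, the corresponding $u\in\qgrass{r,\infty}$ lies outside $S_n$. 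Then no $z_A\in S_n$ satisfies $z_A\ge u$, so $\Phi(F_\alpha)=0$.

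\textbf{Surjectivity (main obstacle).} This is the standard GKM upper-triangularity argument and is where the real work lies. Take a class $g\in H^\bullet_T(QJ_{r,n})$ and choose $A$ Gale-minimal with $g_A\ne0$. Every down-edge at $A$ has label $t_i-t_j$ for $(i,j)\in\invq{A}=\invnc{z_A}$, by Theorem~\ref{thm:EdgesinQJ}; the $B\lessdot A$ edges lie in $QJ$ by the argument in Theorem~\ref{thm:ZaInv}. These labels are pairwise coprime, and at each lower neighbor $g$ vanishes. So if every $(i,j)\in\invq{A}$ gives an edge to a Gale-smaller vertex, then the product of these labels divides $g_A$. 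That is the case: each such edge goes to $(A\setminus j)\cup i<A$. Subtract the appropriate $\mathbb{Z}[\tl_n]$-multiple of $\Phi(F_{\alpha_A})$ and induct. This shows $\Phi$ is surjective and that the $\Phi(F_\alpha)$, $\alpha\in\Comp_{r,n}$, form a $\mathbb{Z}[\tl_n]$-basis, since triangularity with nonzero diagonal entries gives linear independence.

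Combining the two facts, the kernel is exactly the free span of the $F_\alpha$ with $\alpha\notin\Comp_{r,n}$, which is therefore an ideal. Finally, set $\tl=0$. The localization is a free $\mathbb{Z}[\tl_n]$-module and $F_\alpha(\xl_r;0)=F_\alpha(\xl_r)$, so the non-equivariant presentation and its free $\mathbb{Z}$-basis follow.
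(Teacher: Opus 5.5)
Your proposal is correct and follows essentially the same route as the paper. You use the same evaluation-at-$z_A$ map (the paper's $\Psi$), fiber-independence via Lemma~\ref{le:welldefined} and Fact~\ref{fact:ncdescent}, and then Facts~\ref{fact:quasiFbasis} and~\ref{fact:BruhatVanishing} together with Theorems~\ref{thm:ZaInv} and~\ref{thm:EdgesinQJ}. These give a Gale-triangular flowup basis and show that $\Phi(F_\alpha)=0$ for $\alpha\notin\Comp_{r,n}$.

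The differences are minor. You check GKM divisibility directly from the Kreweras-adjacency definition of $E(QJ_{r,n})$, whereas the paper goes through $(i\,j)z_A$ with $(i,j)\in\invnc{z_A}$. You also reprove the flowup-implies-basis statement that the paper cites from \cite[Proposition 11.9]{BGNST1}. In that reproof, the induction should be made explicit over a fixed linear extension of the Gale order.
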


Theorem~\ref{thm:QuotientPresentation} is proved at the end of the subsection.  We construct an isomorphism from the right hand side of the theorem to the graph cohomology ring $H^\bullet_T(QJ_{r, n})$.  For $f(\xl_{n}; \tl_{n}) \in \ZZ[\tl_{n}][\xl_{n}]$ and $w \in S_{n}$, let
\[
f_{w} = f(t_{w(1)}, t_{w(2)}, \ldots, t_{w(n)}; \tl_{n}) \in \ZZ[\tl_{n}].  
\]
Now recall the element $z_{A} \in \qgrass{r,n} \subseteq S_{n}$ from Section~\ref{sec:QJn} associated to $A\in \binom{[n]}{r}$. In what follows when we write $\eqsym{r}\otimes \eqsym{n-r}$, the elements from the factor $\eqsym{n-r}$ are to be considered as polynomials $f(x_{r+1},\ldots,x_n;t_{r+1},\ldots,t_n)$.
\begin{lem}
\label{lem:fiber_calculation}
If $f(\xl_n;\tl_{n} ) \in \eqsym{r}\otimes \eqsym{n-r}$ and $w \in \NC_{n}^A$ then $f_{w} = f_{z_{A}}$.  
\end{lem}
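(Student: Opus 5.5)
By Lemma~\ref{le:welldefined}, every $w \in \NC_n^A$ can be reduced to $z_A$ by a finite sequence of moves $u \mapsto us_k$ with $k \in \desnc{u}$ and $k \neq r$. Each intermediate $u$ stays in $\NC_n^A$, since $s_k$ with $k \neq r$ fixes $[r]$ setwise. So it suffices to prove one step: if $u \in \NC_n$, $k \in \desnc{u}$ and $k \ne r$, then $f_{u} = f_{us_k}$. The result then follows by induction along the chain.

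For the single step, write $v = us_k$, so that $v$ agrees with $u$ except that the entries in positions $k$ and $k+1$ are swapped. By Fact~\ref{fact:ncdescent}, $k \in \{u(k), u(k+1)\}$. Hence the pair of values substituted in positions $k, k+1$ is either
\[
(t_k, t_{m}) \text{ in } u \text{ and } (t_m, t_k) \text{ in } v, \qquad\text{or}\qquad (t_m, t_k) \text{ in } u \text{ and } (t_k, t_m) \text{ in } v,
\]
for some index $m$. Both positions lie in the same factor: either $k+1 \le r$, so both lie in $[r]$, or $k \ge r+1$, so both lie in $\{r+1,\ldots,n\}$.

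Suppose $k < r$. Write $f = \sum g_i \otimes h_i$ with $g_i \in \eqsym{r}$ in $\xl_r$ and $h_i$ in $x_{r+1},\ldots,x_n$. The $h_i$ factors are evaluated identically for $u$ and $v$, so it suffices to show $g_u = g_v$ for each $g \in \eqsym{r}$. The defining relation $\rope{k}^-g = \rope{k}^+g$ says
\[
g(\ldots, x_{k-1}, t_k, x_k, \ldots) = g(\ldots, x_{k-1}, x_k, t_k, \ldots)
\]
as polynomials, where positions $k$ and $k+1$ receive $(t_k, x_k)$ and $(x_k, t_k)$ respectively. Specializing $x_k = t_m$ and the remaining variables to the common values $t_{u(j)}$ gives exactly $g_u = g_v$ in either ordering. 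The polynomials live in $\ZZ[\tl_n]$, so specializing is legitimate. One point to check is the index shift in $\rope{k}^{\pm}$: under this reindexing, position $k$ or $k+1$ receives $t_k$ and the other receives $x_k$, so after specialization the variables $x_{k+1},\ldots$ fill the later positions.

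Suppose instead $k > r$. The same argument applies to the $\eqsym{n-r}$ factor, viewed as polynomials in $x_{r+1},\ldots,x_n$ with $t$-variables $t_{r+1},\ldots,t_n$. The relevant relation is the one at local index $k-r$, with the $t$-variable shifted to $t_k$. The main care needed is to confirm that this shifted convention is exactly the one fixed before the lemma, so that the local relation substitutes $t_k$, which matches Fact~\ref{fact:ncdescent} giving $k \in \{u(k),u(k+1)\}$. Once this indexing is verified, both cases are immediate, and the induction on length via Lemma~\ref{le:welldefined} completes the proof.
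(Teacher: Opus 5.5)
Your proof is correct and follows the paper's argument. You reduce $w$ to $z_A$ via Lemma~\ref{le:welldefined} using noncrossing descents $k\neq r$, and use Fact~\ref{fact:ncdescent} to see that each step is an instance of the equivariant quasisymmetry relation $\rope{k}^-=\rope{k}^+$ in the relevant tensor factor. Your extra bookkeeping, splitting $f=\sum g_i\otimes h_i$ and checking that the shifted $\eqsym{n-r}$ relation substitutes $t_k$, just makes explicit what the paper leaves implicit.
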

\begin{proof} 
By Lemma~\ref{le:welldefined} there is a sequence $w_{1} = w, \ldots, w_{\ell} = z_{A}$ in $\NC_{n}$ such that for each $1\le k < \ell$, $w_{k+1} = w_{k} s_{i_{k}}$ for $i_{k} \in \desnc{w_{k}} \setminus \{r\}$. 
By Fact~\ref{fact:ncdescent}, each $i_{k} \in \{w_{k}(i_{k}), w_{k}(i_{k}+1)\}$, so by the definition of equivariant quasisymmetry $f_{w_{k}} = f_{w_{k+1}}$.
\end{proof}

Now define a map
\[
\begin{array}{rcl}
\Psi: \eqsym{r}\otimes \eqsym{n-r} & \to & \ZZ[\tl_{n}]^{\oplus \binom{[n]}{r}} \\
f(\xl_n;\tl_{n} )  & \mapsto & \big(f_{z_{A}} \big)_{A\in \binom{[n]}{r}}.
\end{array}
\]

\begin{prop}
\label{prop:welldefined}
The image of $\Psi$ is contained in the equivariant graph cohomology ring $H^\bullet_T(QJ_{r, n})$.
\end{prop}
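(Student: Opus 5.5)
We need to show: for each edge $AB\in E(QJ_{r,n})$ with $B=(A\setminus j)\cup i$, $(i,j)\in\inv{A}$, the label $t_i-t_j$ divides $f_{z_A}-f_{z_B}$.

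The plan is to use the definition of $QJ_{r,n}$ directly and avoid working with the specific representatives $z_A,z_B$. Since $AB$ is an edge of $QJ_{r,n}$, there exist Kreweras-adjacent $w,u\in\NC_n$ with $w\cdot[r]=A$ and $u\cdot[r]=B$. Kreweras adjacency means $u=\tau w$ for some transposition $\tau=(p\,q)$. Because $w\in\NC_n^A$ and $u\in\NC_n^B$, Lemma~\ref{lem:fiber_calculation} gives $f_{z_A}=f_w$ and $f_{z_B}=f_u$. It therefore suffices to show that $t_i-t_j$ divides $f_w-f_{\tau w}$.

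First I identify $\tau$. We have $\tau\cdot A=\tau w\cdot[r]=B$, and $A\neq B$ differ by exchanging $j$ for $i$. A transposition $(p\,q)$ sends $A$ to $(A\setminus j)\cup i$ only if it swaps an element of $A$ with a non-element. Hence $\{p,q\}=\{i,j\}$, so $\tau=(i\,j)$.

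Next comes the divisibility. Note that $f_{\tau w}=f(t_{\tau w(1)},\dots,t_{\tau w(n)};\tl_n)$ is obtained from $f_w=f(t_{w(1)},\dots,t_{w(n)};\tl_n)$ by the substitution exchanging $t_i$ and $t_j$ in the $x$-slots only. The $\tl$-arguments are left unchanged. So $f_w-f_{\tau w}=g(t_i,t_j)-g(t_j,t_i)$, where $g(a,b)$ denotes the polynomial $f$ with $x$-slots $w^{-1}(i)$ and $w^{-1}(j)$ set to $a$ and $b$ respectively. The remaining slots are set to the other $t_{w(k)}$, and the $\tl$-variables are kept as polynomial coefficients. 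Strictly, the $\tl$-coefficients also involve $t_i,t_j$, so it is cleaner to introduce fresh variables $a,b$ in the $x$-slots. Then $h(a,b)\coloneqq g(a,b)-g(b,a)$ is an integer polynomial in $a,b$ and the $\tl$ that vanishes at $a=b$. Therefore $a-b$ divides $h$ over $\mathbb{Z}[\tl_n][a,b]$, and specializing $a=t_i$, $b=t_j$ gives $t_i-t_j\mid f_w-f_{\tau w}$. The sign convention in \eqref{eq:edgelabels} does not matter, since it only changes the label by $\pm1$.

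The main potential obstacle is the dependency on Lemma~\ref{lem:fiber_calculation}: it requires $f\in\eqsym{r}\otimes\eqsym{n-r}$ and that the reduction from $w$ to $z_A$ uses only noncrossing descents $\neq r$. Both are already established, so once $\tau=(i\,j)$ is identified the argument is the standard GKM divisibility. I would double-check that Kreweras adjacency in the definition of $QJ_{r,n}$ is left multiplication, which matches the stated covering relation $\tau u=w$.
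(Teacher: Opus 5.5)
Your proof is correct. It differs from the paper's in which pair of noncrossing partitions carries the divisibility, and that choice changes what the argument depends on.

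The paper works with the fixed representative $z_A$. It sets $w=(i\,j)z_A$ and applies Lemma~\ref{lem:fiber_calculation} once, to get $f_w=f_{z_B}$, and then divisibility of $f_{z_A}-f_w$ is immediate. For the lemma to apply there, one needs $(i\,j)z_A\in\NC_n$. That is exactly the nontrivial inclusion $\invq{A}\subseteq\invnc{z_A}$ of Theorem~\ref{thm:EdgesinQJ}, which the paper uses silently at that step.

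You instead take an arbitrary Kreweras-adjacent pair $w,\tau w$ that witnesses the edge, straight from the definition of $QJ_{r,n}$. You recover $\tau=(i\,j)$ from $\tau\cdot A=B$ and apply Lemma~\ref{lem:fiber_calculation} at both endpoints. So your argument needs only the definition of $QJ_{r,n}$ and fiber-invariance, and does not depend on Theorem~\ref{thm:EdgesinQJ} at all.

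The paper's version is shorter once that theorem is available. It is needed anyway to match the flowup normalization in Proposition~\ref{prop:flowupfundamental}. Yours is more self-contained and makes clear that well-definedness of $\Psi$ is independent of the characterization $\invq{A}=\invnc{z_A}$.

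Your concern about left versus right multiplication does not arise. The paper does use left multiplication, and in any case $\tau w=w\,(w^{-1}\tau w)$ with $w^{-1}\tau w$ again a transposition, so both conventions give the same adjacencies.
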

\begin{proof}
We show that each $f \in \eqsym{r}\otimes \eqsym{n-r}$ satisfies the divisibility condition for each edge $AB$ with $B = (A \setminus j) \cup i$ and $(i, j) \in \invq{A}$.  
Let $w = (i\, j) z_{A}$, so that $w \cdot [r] = B$ and Lemma~\ref{lem:fiber_calculation} implies $f_{w} = f_{z_{B}}$.  
Then $f_{z_{A}} - f_{z_{B}} = f_{z_{A}} - f_{w}$, which must be divisible by $t_{j} - t_{i}$.  
\end{proof}

We now show that $\Psi$ surjects $\eqsym{r}$ onto $H^\bullet_T(QJ_{r, n})$.  
For an edge labeled graph $G$ as in the previous section, a collection of elements $\{f^{(v)} \in H^{\bullet}_{T}(G) \suchthat v \in V(G)\}$ is a \emph{flowup basis} with respect to an ordering $\prec$ on $V(G)$ \cite[Section 11.2]{BGNST1} if:
\[
\text{$f^{(v)}_{u} = 0$ whenever $v \not\prec u$}
\qquad\text{and}\qquad
f^{(v)}_{v} = \prod_{\substack{uv \in E(G) \\ u \prec v}} \chi(uv).
\]
If a flowup basis exists, then it is a free $\mathbb{Z}[\tl_{n}]$-basis for $H^\bullet_T(G)$ \cite[Proposition 11.9]{BGNST1}.

\begin{prop}
\label{prop:flowupfundamental}
The map $\Psi$ sends the set $\{F_{\alpha}(\xl_r;\tl_n)\suchthat \alpha\in \Comp_{r,n}\}$  to a flowup basis of $H^\bullet_T(QJ_{r, n})$ with respect to the Gale order.  Further, if $\alpha \notin \Comp_{r,n}$ then $\Psi(F_{\alpha}(\xl_r;\tl_n)) = 0$.  
\end{prop}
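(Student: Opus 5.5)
Our plan is to read off both claims directly from Fact~\ref{fact:BruhatVanishing}, using the identification of $\Comp_{r,n}$ with $\qgrass{r,n}$ from \eqref{eq:qgrass_to_comp} and the comparison between Bruhat order on quasigrassmannian permutations and Gale order from Theorem~\ref{thm:ZaInv}.

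First we fix $\alpha\in\Comp_{r,n}$ and let $A\in\binom{[n]}{r}$ be the set with $\alpha_A=\alpha$, so $u=z_A$ is the quasigrassmannian permutation corresponding to $\alpha$. Since $F_\alpha(\xl_r;\tl_n)$ involves only $x_1,\ldots,x_r$, it lies in $\eqsym{r}\otimes\eqsym{n-r}$, and $\Psi(F_\alpha)_B=(F_\alpha)_{z_B}$. For the diagonal entry, Fact~\ref{fact:BruhatVanishing} gives
\[
(F_\alpha)_{z_A}=\prod_{(i,j)\in\invnc{z_A}}(t_j-t_i).
\]
By Theorem~\ref{thm:EdgesinQJ}, $\invnc{z_A}=\invq{A}$, and these inversions index exactly the edges $AB'$ of $QJ_{r,n}$ with $B'=(A\setminus j)\cup i<A$ in Gale order. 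Each such edge has label $\chi=\pm(t_i-t_j)$ by \eqref{eq:edgelabels}, so the diagonal value equals the product of the labels of the down-edges at $A$, up to a global sign. That sign is fixed by the orientation convention in \eqref{eq:edgelabels}, and we would check it against the convention $t_i\leftrightarrow\chi_i^{-1}$; since a flowup basis only needs its diagonal entries up to units, this bookkeeping does not affect the argument. For the vanishing condition, suppose $B\not\ge A$ in Gale order. Theorem~\ref{thm:ZaInv} then gives $z_B\not\ge z_A$ in Bruhat order, and both lie in $\qgrass{r,\infty}$, so Fact~\ref{fact:BruhatVanishing} yields $(F_\alpha)_{z_B}=0$. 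Together these show that $\{\Psi(F_\alpha)\}$ is a flowup family indexed by $V(QJ_{r,n})$, hence a flowup basis.

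For the second claim, take $\alpha\notin\Comp_{r,n}$ with $\ell(\alpha)\le r$; if $\ell(\alpha)>r$ we read $F_\alpha(\xl_r;\tl_n)$ as zero by truncation. Under the bijection \eqref{eq:qgrass_to_comp} extended to $\qgrass{r,\infty}$, $\alpha$ corresponds to some $u\in\qgrass{r,N}$ with $N>n$ and $u\notin S_n$. We then need $(F_\alpha)_{z_B}=0$ for every $B\in\binom{[n]}{r}$. Fact~\ref{fact:BruhatVanishing} reduces this to showing $z_B\not\ge u$ in Bruhat order in $S_N$. This holds because $z_B\in S_n$ fixes every $m>n$, while $u$ moves some $m>n$; and anything Bruhat-above $u$ must send $[n]$ to a set that is not elementwise dominated by $[n]$.

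The step we expect to be most delicate is this last one. It needs care in two places: stabilizing the bijection $\qgrass{r,\infty}\leftrightarrow$ compositions to confirm that $\alpha\notin\Comp_{r,n}$ corresponds exactly to $u\notin S_n$, and applying the tableau criterion for Bruhat order on the set $[n]$. For the first point we would use $|\alpha_A|=b_k-a_k$ from the proof of Theorem~\ref{thm:parttocomp} together with the support $[a_k,b_k]$ of $z_A$: if this support lies inside $[n]$, the ribbon of $\alpha$ fits in the $r\times(n-r)$ box. Combined with the counting bijection $\Comp_{r,n}\leftrightarrow\binom{[n]}{r}$, this identifies the two sets.
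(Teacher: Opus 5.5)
Your proposal is correct and follows the paper's argument. The flowup property comes from Fact~\ref{fact:BruhatVanishing} together with $\invq{A}=\invnc{z_A}$ (Theorem~\ref{thm:EdgesinQJ}) and the Gale/Bruhat comparison (Theorem~\ref{thm:ZaInv}). The vanishing for $\alpha\notin\Comp_{r,n}$ holds because the corresponding $u\in\qgrass{r,\infty}$ lies outside $S_n$, so it is not Bruhat-below any $z_B$.

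One step needs a better justification: ``$u$ moves some $m>n$'' does not by itself give $u\cdot[n]\neq[n]$. Either apply the tableau criterion at $[r]$, where $u\cdot[r]$ has largest element $>n$, or note that $S_n$ is a lower order ideal of $S_N$ in Bruhat order by the subword property.
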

\begin{proof}
As $A \le B$ if and only if $z_{A} \le z_{B}$ by Theorem~\ref{thm:ZaInv} and $\invq{A} = \invnc{z_{A}}$ by Theorem~\ref{thm:EdgesinQJ}, the first claim follows from Fact~\ref{fact:BruhatVanishing}.  
The second claim also follows from Fact~\ref{fact:BruhatVanishing}: if $\alpha \notin \Comp_{r,n}$ then the associated noncrossing partition $Z$ does not belong to $\NC_{n}$, and therefore cannot precede any element of $\qgrass{r,n}$ in the Bruhat order.
\end{proof}

\begin{proof}[Proof of Theorem~\ref{thm:QuotientPresentation}]
As $\eqsym{r}[t_{r+1},\ldots,t_n]$ has a free $\ZZ[\tl_{n}]$-basis of double fundamental quasisymmetric polynomials, Theorem~\ref{thm:gmk_presentation} and Proposition~\ref{prop:flowupfundamental} immediately imply that 
\[
H^{\bullet}_{T}(\QGr(r; n)) 
\cong \frac{\eqsym{r}[t_{r+1},\ldots,t_n]}{\langle F_\alpha(\xl_{r}; \tl_{n})\suchthat \alpha \not\in \Comp_{r,n}\rangle},
\]
and moreover that the ideal has a free $\mathbb{Z}$-basis $\{F_{\alpha}(\xl_{n}; \tl_{n}) \suchthat \alpha \notin  \mathrm{Comp}_{r, n}\}$.  Quotienting by $(t_1,\ldots,t_n)$ then gives the first isomorphism in Theorem~\ref{introthm1}.
\end{proof}
\begin{eg}
    Figure~\ref{fig:qjr_gkm} depicts show the GKM graph for $\QGr(2;4)$ as well as examples of flowup basis elements obtained by evaluating double fundamental quasisymmetric polynomials.
    The combinatorial vine model introduced in~\cite[Section 5]{BGNST1} implies the following expansions
    \begin{align*}
        F_{12}(\xl_2;\tl_4) &= (x_{1} -t_{3})  (x_{1} - t_{1})  (x_{2} - t_{1})\\
        F_{2}(\xl_2;\tl_4)&=x_{2}^{2} + x_{1} x_{2} - x_{2} t_{3} - x_{2} t_{2} + x_{1}^{2} - x_{1} t_{3} - x_{1} t_{2} + t_{2} t_{3} - x_{2} t_{1} - x_{1} t_{1} + t_{1} t_{3} + t_{1} t_{2}
    \end{align*}
    using which the reader can check that evaluations at elements in $\qgrass{2,4}$ produces the polynomials highlighted in red.
\end{eg}

\begin{figure}[!ht]
    \centering
    \includegraphics[width=\linewidth]{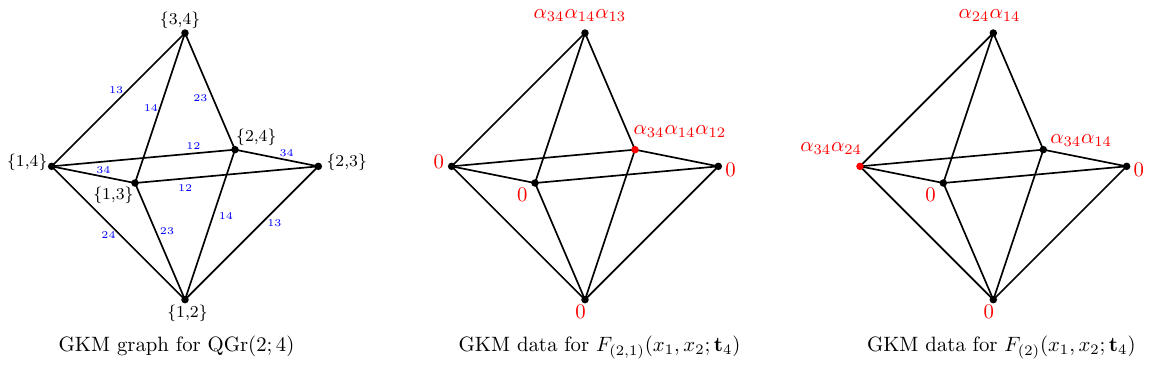}
    \caption{Examples of flowup basis elements from evaluations of $F_{\alpha}(\xl_r;\tl_n)$, where $\alpha_{ij}\coloneqq t_j-t_i$.}
    \label{fig:qjr_gkm}
\end{figure}

\subsection{A quasisymmetric Borel presentation}

We now give the second presentation in Theorem~\ref{introthm1}.  As in the previous section, we state an equivariant version which implies the ordinary version.

\begin{thm}
\label{thm:Qsymtensor}
The variety $\QGr(r; n) \subseteq \Gr(r; n)$ has
\[
H^{\bullet}_{T}(\QGr(r; n)) 
\cong 
\frac{\eqsym{r} \otimes \eqsym{n-r}}{\langle f(\xl_{n}; \tl_{n}) - f(\tl_{n}; \tl_{n}) \suchthat f \in \eqsym{n} \rangle}.
\]
As a consequence, the second presentation of Theorem~\ref{introthm1} holds.  
\end{thm}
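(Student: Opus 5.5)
We would prove Theorem~\ref{thm:Qsymtensor} by showing that the map $\Psi$ of Proposition~\ref{prop:welldefined}, defined on all of $\eqsym{r}\otimes\eqsym{n-r}$, is surjective onto $H^\bullet_T(QJ_{r,n})\cong H^\bullet_T(\QGr(r;n))$ and that its kernel is the ideal $I$ generated by the elements $f(\xl_n;\tl_n)-f(\tl_n;\tl_n)$ for $f\in\eqsym{n}$.

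Surjectivity is immediate from Proposition~\ref{prop:flowupfundamental}. The elements $F_\alpha(\xl_r;\tl_n)$ with $\alpha\in\Comp_{r,n}$ lie in $\eqsym{r}[t_{r+1},\ldots,t_n]\subseteq \eqsym{r}\otimes\eqsym{n-r}$, since constants in $t_{r+1},\ldots,t_n$ are equivariantly quasisymmetric in the second factor. These elements map to a flowup basis, and a flowup basis is a free $\ZZ[\tl_n]$-basis of $H^\bullet_T(QJ_{r,n})$.

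Next, $I\subseteq\ker\Psi$. Let $f\in\eqsym{n}$ and $A\in\binom{[n]}{r}$. We need $f_{z_A}=f(\tl_n;\tl_n)=f_{\idem}$. Since $z_A\in\NC_n$, the Remark after Lemma~\ref{le:welldefined} gives a chain $\idem=w_0,w_1,\ldots,w_m=z_A$ with $w_{k+1}=w_ks_{i_k}$ and $i_k\in\{w_k(i_k),w_k(i_k+1)\}$. At each step full equivariant quasisymmetry (now in all $n$ variables, with $i_k\ne r$ no longer required) gives $f_{w_k}=f_{w_{k+1}}$, by the same argument as in Lemma~\ref{lem:fiber_calculation}. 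One also has to check that $f$ genuinely lies in $\eqsym{r}\otimes\eqsym{n-r}$. For this we would use that a polynomial satisfying $\rope{i}^-=\rope{i}^+$ for all $i\le n-1$ satisfies it in particular for $i\ne r$, and that this characterizes the tensor product. The latter is a Künneth-type statement: by Fact~\ref{fact:quasiFbasis} applied to each block of variables, $\eqsym{r}\otimes\eqsym{n-r}$ is the subring of $\ZZ[\xl_n;\tl_n]$ cut out by these conditions. We take this as routine or cite it from \cite{BGNST1}.

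The main obstacle is the reverse inclusion $\ker\Psi\subseteq I$. The plan is a rank comparison via a module basis, mirroring the classical Borel argument. First, modulo $I$ every element of $\eqsym{r}\otimes\eqsym{n-r}$ can be written as a $\ZZ[\tl_n]$-combination of $\{F_\alpha(\xl_r;\tl_n)\suchthat\alpha\in\Comp_{r,n}\}$. The main tool for this is a coproduct-type identity for double fundamentals: $F_\gamma(\xl_n;\tl_n)$ expands as $\sum F_\alpha(\xl_r;\tl)\,F_\beta(x_{r+1},\ldots,x_n;\,\cdot\,)$ over deconcatenations $\gamma=\alpha\cdot\beta$ or near-concatenations, suitably shifted in $\tl$. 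This should follow from the vine model of \cite{BGNST1}, in analogy with $\Delta F_\gamma$ in $\mathrm{QSym}$. Using these relations triangularly, we eliminate the second-factor generators $F_\beta(x_{r+1},\ldots)$ with $\beta\ne\emptyset$, leaving $\eqsym{r}[t_{r+1},\ldots,t_n]$ modulo $I$. We then show that $F_\alpha(\xl_r;\tl_n)\in I$ for $\alpha\notin\Comp_{r,n}$, namely when the ribbon is too wide. This is the analogue of $h_{k}(x_1,\ldots,x_r)\in I$ for $k>n-r$ in Borel's theorem, and we expect it to be the hardest step. We would first try an induction on $|\alpha|$ using the coproduct relation for $F_{\gamma}(\xl_n;\tl_n)$ with $\gamma=\alpha$ together with degree bounds $\ell(\beta)\le n-r$ in the second factor. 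Granting this, $(\eqsym{r}\otimes\eqsym{n-r})/I$ is spanned by the images of the $F_\alpha$ with $\alpha\in\Comp_{r,n}$. By surjectivity these images map to a free basis, so they are independent, and $\Psi$ induces an isomorphism. Setting $\tl=0$ then yields the second presentation of Theorem~\ref{introthm1}.
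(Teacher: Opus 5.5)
\textbf{Verdict: genuine gap.} Your outer framework matches the paper's:
\begin{itemize}
\item surjectivity of $\Psi$ follows from Proposition~\ref{prop:flowupfundamental};
\item the inclusion $I\subseteq\ker\Psi$ holds, and your chain from $\idem$ to $z_A$ inside $\NC_n$ is correct (it makes explicit a step the paper leaves tacit);
\item it then suffices to show the quotient is spanned by $\{F_\alpha(\xl_r;\tl_n)\suchthat\alpha\in\Comp_{r,n}\}$.
\end{itemize}
That spanning statement is the entire content of the theorem, and your proposal does not prove it. It rests on two unproved inputs.

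First, an equivariant coproduct for $F_\gamma(\xl_n;\tl_n)$ with ``suitably shifted'' $\tl$-variables. This appears neither in the paper nor in the cited facts. What Fact~\ref{fact:quasiFbasis} actually gives, via your K\"unneth-type argument, is only some expansion $F_\gamma(\xl_n;\tl_n)=\sum_\alpha F_\alpha(\xl_r;\tl_n)H_{\gamma,\alpha}$ whose second-factor coefficients $H_{\gamma,\alpha}$ are uncontrolled. It is not clear what the correct shifts would even be.

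Second, the membership $F_\alpha(\xl_r;\tl_n)\in I$ for $\alpha\notin\Comp_{r,n}$, which you explicitly only grant. This Borel-type vanishing is exactly where the work lies. Your suggested induction on $|\alpha|$ is not carried out, and in the equivariant setting you have no formula to run it with.

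\textbf{The missing idea is to leave the equivariant world first.} By Fact~\ref{fact:quasiFbasis}, the top $\xl$-degree component of any element of $\eqsym{r}\otimes\eqsym{n-r}$ lies in $\qsym{r}\otimes\qsym{n-r}$ (with $\ZZ[\tl_n]$-coefficients). Each ordinary $F_\beta(\xl_n)$ is the top component of $F_\beta(\xl_n;\tl_n)\in\eqsym{n}$. So induction on $\xl$-degree reduces spanning to the non-equivariant statement modulo $\langle f-f(0)\suchthat f\in\qsym{n}\rangle$.

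In that setting the ordinary coproduct of $F_\beta(\xl_n)$ eliminates the second tensor factor. The paper then handles too-wide $\alpha$ with the algorithm of \cite[\S 9.2]{NST_a}. Write $\alpha=(\gamma_1,\ldots,\gamma_\ell+\beta_1,\ldots,\beta_k)$ with $\beta\in\Comp_{r,n}$ and $|\beta|$ maximal. Then $F_\alpha(\xl_r)=F_\beta(\xl_r)F_\gamma(\xl_n)+\sum F_\theta(\xl_r)f_\theta(\xl_n)$, summed over $\theta\in\Comp_{r,n}$ with $|\theta|<|\beta|$ and $f_\theta\in\qsym{n}$ of positive degree. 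Every term lies in the ideal.

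Once you are non-equivariant, your own coproduct-plus-length-bound idea also closes cleanly:
\begin{itemize}
\item Modulo the ideal, the map $f\mapsto f(x_{r+1},\ldots,x_n)$ on $\operatorname{QSym}$ is the convolution inverse of the algebra map $f\mapsto f(x_1,\ldots,x_r)$. Hence it equals that map composed with the antipode $S$.
\item We have $S(F_\gamma)=(-1)^{|\gamma|}F_{\omega(\gamma)}$, where $\omega$ complements and reverses descent sets, so that $\ell(\gamma)=|\omega(\gamma)|-\ell(\omega(\gamma))+1$ is the ribbon width of $\omega(\gamma)$.
\item Since $F_\gamma(x_{r+1},\ldots,x_n)=0$ whenever $\ell(\gamma)>n-r$, this gives $F_\alpha(\xl_r)\equiv 0$ for every $\alpha$ whose ribbon is wider than $n-r$. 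It also shows at once that the second tensor factor is redundant.
\end{itemize}
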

\begin{proof}
By Proposition~\ref{prop:flowupfundamental}, composing the map $\Psi$ with the isomorphism of Theorem~\ref{thm:gmk_presentation} gives a surjection from $\eqsym{r} \otimes \eqsym{n-r}$ onto $H^{\bullet}_{T}(\QGr(r; n))$.  
We therefore only need to show that every element of $\eqsym{r}\otimes \eqsym{n-r}$ can be written modulo the ideal as a $\ZZ[\tl_{n}]$-linear combination of $\{F_{\alpha}(\xl_r;\tl_n)\suchthat \alpha\in \Comp_{r,n}\}$.  

By Fact~\ref{fact:quasiFbasis}, the top $\xl_{n}$-degree part of any element of $\eqsym{r} \otimes \eqsym{n-r}$ is an ordinary quasisymmetric polynomial.  
Thus by induction on $\xl_{n}$-degree, if is sufficient to show that every element of $\qsym{r}\otimes \qsym{n-r}$ can be written as a linear combination of the ordinary fundamentals $\{F_{\alpha}(\xl_r)\suchthat \alpha\in \Comp_{r,n}\}$ modulo the ideal $\langle f-f(0)\suchthat f\in \qsym{n}\rangle$. 
The coproduct formula for fundamental quasisymmetric polynomials $F_\beta(\xl_n)$ induces an upper triangular change of basis 
$$
\{F_{\alpha}(\xl_{r}) F_{\beta}(x_{r+1},\ldots,x_n)\suchthat \ell(\alpha)\le r, \ell(\beta)\le n-r\}\to \{F_{\alpha}(\xl_{r}) F_{\beta}(\xl_{n})\suchthat \ell(\alpha)\le r, \ell(\beta)\le n-r\}
$$
with respect to $|\beta$|. The elements $F_{\alpha}(\xl_{r}) F_{\beta}(\xl_{n})$ belong to our ideal if $|\beta| > 0$, so we need only reduce the elements $F_{\alpha}(\xl_{r})$ modulo the ideal.  

To do so, we appeal to the algorithm constructed in \cite[\S 9.2]{NST_a}; while this algorithm was originally phrased in the language of forest polynomials, we summarize the relevant case using compositions.  
Every $\alpha \notin \Comp_{r, n}$ determines two compositions $\beta = (\beta_{1}, \ldots, \beta_{k})\in \Comp_{r,n}$ and $\gamma = (\gamma_{1}, \ldots, \gamma_{\ell})\ne \emptyset$ so that $\alpha = (\gamma_{1}, \ldots, \gamma_{\ell} + \beta_{1}, \ldots, \beta_{k})$ and $|\beta|$ is as large as possible.  
Then the proofs of \cite[Theorem 9.7, Lemma 9.8]{NST_a} show that we can write 
\[
F_{\alpha}(\xl_{r}) = F_{\beta}(\xl_{r}) F_{\gamma}(\xl_{n}) + \sum_{\substack{\theta \in \Comp_{r, n} \\ |\theta| < |\beta|}} F_{\theta}(\xl_{r}) f_{\theta}(\xl_{n})
\qquad\text{with $f_{\theta}(\xl_{n}) \in \qsym{n}$ of degree $|\alpha| - |\theta|$.}
\]
As $|\alpha| - |\theta| > |\gamma| > 0$, each term on the right is contained in $\langle f-f(0)\suchthat f\in \qsym{n}\rangle$.
\end{proof}

\subsection{Kronecker duality and positivity}

We conclude the section with a proof of Theorem~\ref{introthm4}. If $
X$ is a projective (not necessarily irreducible) variety with a $T$-action, then there are \emph{Kronecker pairings}
\[
\langle -,-\rangle^T_X: H_\bullet^T(X)\otimes_{H^\bullet(pt)} H^\bullet_T(X)\to H^\bullet_T(pt)=\mathbb{Z}[\tl_n]
\]
and
\[
\langle -,-\rangle_X:H_\bullet(X)\otimes H^\bullet(X)\to H^\bullet(pt)=\mathbb{Z},
\]
where $H^\bullet_T(X)$ and $H_\bullet^T(X)$ are $T$-equivariant cohomology and $T$-equivariant Borel--Moore homology respectively.  The pairing is defined to be zero if the cohomological degree and homological dimension differ.
If $Y\subset X$ is a projective toric variety in $X$, then for cohomology classes $\wt{f}\in H^\bullet_T(X)$ and $f\in H^\bullet(X)$ we can express the Kronecker pairings $$\langle [Y],\wt{f}\rangle^T_X=\int_Y^T \wt{f}\in H^\bullet_T(pt)=\mathbb{Z}[\tl_n]\qquad\text{and}\qquad\langle [Y],\wt{f}\rangle_X=\int_Y f \in H^\bullet(pt)=\mathbb{Z},$$
where $\wt{f}$ and $f$ are considered in $H^\bullet_T(Y)$ and $H^\bullet(Y)$ via pullback from the inclusion $Y\hookrightarrow X$, and the \emph{degree maps} $\int_Y^T$ and $\int_Y$ are computed directly on $Y$ by $\kappa_{\ast}\rho^{\ast}$, where $\rho: \wt{Y}\to Y$ is any toric resolution and $\kappa: \wt{Y} \to pt$ is the projection.

Recall the affine paving $\{\mathring{X}^{A}_{\NC} \suchthat A \in \binom{[n]}{r} \}$ into $T$-representations from  Theorem~\ref{thm:everythingequal}.  
A standard property of affine pavings implies the cell closures $X^{A}_{\NC} = \overline{\mathring{X}^{A}_{\NC}}$ determine a free $\mathbb{Z}$-basis for $H_\bullet(\QGr(r;n))$~\cite[Ex.~19.1.11(b)]{Fulton_Intersection} and a free $\mathbb{Z}[\tl_n]$ basis for $H_\bullet^T(\QGr(r; n))$ \cite[Proposition 2.1]{Gra01}.
\begin{thm}
\label{thm:Kronecker}
The cohomological basis $\{F_{\alpha}(\xl_{r}; \tl_{n}) \suchthat \alpha \in \Comp_{r, n}\}$ is Kronecker dual to the homology basis $\{ [X^{A}_{\NC}]  \suchthat A \in \binom{[n]}{r} \}$, or equivalently 
\[
\langle X^A_{\NC},F_\alpha(\xl_r;\tl_n)\rangle_{\QGr(r;n)}^T=\int_{X^{A}_{\NC}}^{T} F_{\alpha}(\xl_{r}; \tl_{n}) = \begin{cases}
1 & \text{if $A = A_{\alpha}$,} \\
0 & \text{otherwise.}
\end{cases}
\]
Passing to singular cohomology, this implies Theorem~\ref{introthm4}.
\end{thm}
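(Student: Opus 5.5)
We plan to compute the equivariant integrals by Atiyah--Bott localization on each toric variety $X^A_{\NC}$, and then show the result matches the flowup structure already established in Proposition~\ref{prop:flowupfundamental}. A cleaner route is to use triangularity plus a degree argument. First, note that $X^A_{\NC}$ is $T$-invariant with $T$-fixed points contained in $\{B \suchthat B\le A\}$, since $X^A_{\NC}\subseteq X^{A}=\bigsqcup_{B\le A}\mathring{X}^B$. By Fact~\ref{fact:BruhatVanishing} combined with Theorem~\ref{thm:ZaInv}, $\Psi(F_\alpha)$ vanishes at every fixed point $B\not\ge A_\alpha$. Hence if $A_\alpha\not\le A$, the class $F_\alpha$ restricts to zero at every fixed point of $X^A_{\NC}$. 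Because $X^A_{\NC}$ has an affine paving (it is the closure of a cell in the good paving of Theorem~\ref{thm:everythingequal}, so its cohomology is GKM-injective into fixed points), $F_\alpha|_{X^A_{\NC}}=0$ and the pairing vanishes.

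Next, consider degrees. By Theorem~\ref{thm:parttocomp}, $F_\alpha$ has degree $|\alpha|=\OHL(\lambda_{A_\alpha})=\dim X^{A_\alpha}_{\NC}$. The equivariant pushforward $\int^T_{X^A_{\NC}}F_\alpha$ lies in $\mathbb{Z}[\tl_n]$ in degree $|\alpha|-\dim X^A_{\NC}$. So it vanishes whenever $\dim X^A_{\NC}>|\alpha|$, and it is a scalar when the two are equal. The remaining cases are $A_\alpha< A$ with $\dim X^A_{\NC}\le |\alpha|$, together with $A=A_\alpha$.

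For $A=A_\alpha$ we use localization. The point $A$ is a smooth point of $X^A_{\NC}$, since $\mathring{X}^A_{\NC}$ is an open affine cell with tangent weights $\chi_i/\chi_j$ for $(i,j)\in\invq{A}$. The restriction of $F_\alpha$ at $A$ is $\prod_{(i,j)\in\invnc{z_A}}(t_j-t_i)$ by Fact~\ref{fact:BruhatVanishing}, which is exactly the equivariant Euler class of that tangent space under the sign convention $t_i\leftrightarrow\chi_i^{-1}$. Since the degree is zero, the integral is a constant. To compute it we can push forward to a point after restricting to the open cell: $F_\alpha$ minus the class supported on lower cells equals the equivariant fundamental class of the point $A$ in $X^A_{\NC}$, whose integral is $1$. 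Equivalently, $F_\alpha|_{X^A_{\NC}}$ restricted to $\mathring{X}^A_{\NC}\cong\mathbb{A}^d$ is the Thom class of the origin, and the lower fixed points contribute nothing in top degree.

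The main obstacle is the case $A_\alpha<A$ with $\dim X^A_{\NC}<|\alpha|$. Degree reasons do not kill this case outright in the equivariant setting, so it needs a separate argument. Our plan is to use the flowup property together with the duality of flowup bases: the classes $\Psi(F_\alpha)$ form the flowup basis, and the homology classes of cell closures in a good paving form the dual flowup basis by the standard GKM/Brion argument (as in \cite[Section 11]{BGNST1} for $\hhmp_n$). Concretely, we would show that the pairing matrix is unitriangular and that its off-diagonal entries have negative degree $|\alpha|-\dim X^A_{\NC}<0$ whenever $A_\alpha<A$ yields smaller dimension. This forces those entries to vanish, since polynomials cannot have negative degree. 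Any entry with $\dim X^A_{\NC}\ge|\alpha|$ and $A\neq A_\alpha$ must also be shown to be zero; for this we would invoke the equivariant localization formula on the toric variety $X^A_{\NC}$, using the vanishing of $F_\alpha$ at all fixed points outside $\{B\ge A_\alpha\}$.

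Finally, setting $\tl=0$ gives the ordinary statement of Theorem~\ref{introthm4}.
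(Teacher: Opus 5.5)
Your proposal has a genuine gap: it does not handle the case $A_\alpha<A$ with $\OHL(\lambda_A)=|\alpha|$, and that case cannot be settled with the tools you use.

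\textbf{What works.} Three of your cases are sound in substance. If $A_\alpha\not\le A$, then $F_\alpha$ vanishes at every $T$-fixed point of $X^A_{\NC}$. If $A=A_\alpha$, the only fixed point of $X^{A_\alpha}_{\NC}$ where $F_\alpha$ is nonzero is the smooth point $A_\alpha$, whose tangent weights come from $\invq{A_\alpha}$, so it contributes exactly $1$. If $A_\alpha<A$ with $\OHL(\lambda_A)>|\alpha|$, the degree is negative. For the first two cases, justify the reduction to fixed points by localization in equivariant Borel--Moore homology, writing $[X^A_{\NC}]=\sum_p e_p(X^A_{\NC})[p]$ after inverting. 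Do not rely on an affine paving of $X^A_{\NC}$ inherited from $\QGr(r;n)$: cell closures need not be unions of cells. For example, $X^{34}_{\NC}\subset\QGr(2;4)$ meets the $3$-cell $\mathring{X}^{24}$ in a proper subset.

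\textbf{The gap.} The case you call the main obstacle, $A_\alpha<A$ with $\dim X^A_{\NC}<|\alpha|$, is empty. Gale order is containment of partitions, and $\OHL$ is weakly monotone under containment. What actually remains is $A_\alpha<A$ with $\OHL(\lambda_A)=|\alpha|$. There the pairing is an integer constant, and localization gives no control, because $A$ itself is a fixed point of $X^A_{\NC}$ with $A\ge A_\alpha$.

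No argument using only flowup, degree and support information can close this case. If $A_\beta>A_\alpha$ and $|\beta|=|\alpha|$, then $F_\alpha+cF_\beta$ satisfies every property you use, for every $c\in\ZZ$. Yet its pairing with $[X^{A_\beta}_{\NC}]$ differs from that of $F_\alpha$ by $c$.

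Concretely, in $\QGr(2;4)$ take $A_\alpha=\{2,4\}$ and $A=\{3,4\}$; both have $\OHL=3$.
\begin{itemize}
\item The GKM conditions along the edges from $\{3,4\}$ to $\{2,4\}$, $\{1,3\}$ and $\{1,4\}$ force the value at $\{3,4\}$ of any degree-$3$ flowup class at $\{2,4\}$ to be $(t_4-t_1)(t_3-t_1)\bigl(t_4-t_2+c(t_3-t_2)\bigr)$.
\item The class vanishes at $\{1,2\}$, $\{1,3\}$ and $\{1,4\}$, including the singular point $\{1,4\}$.
\item Localization on the divisor $X^{34}_{\NC}=\{\Delta_{23}=0\}$, which is smooth at $\{2,4\}$ and $\{3,4\}$, then gives the value $1+c$.
\end{itemize}
The theorem in this instance is precisely the assertion that the true double fundamental has $c=-1$. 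That is information about $F_\alpha(\xl_r;\tl_n)$ beyond Fact~\ref{fact:BruhatVanishing}.

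\textbf{How the paper gets it.} The paper obtains this input from $\hhmp_n$ in three steps.
\begin{enumerate}
\item It shows that $\pi^*$ sends $F_\alpha(\xl_r;\tl_n)\in H^\bullet_T(\QGr(r;n))$ to the same polynomial in $H^\bullet_T(\hhmp_n)$, using Lemma~\ref{lem:fiber_calculation} and injectivity of $\mathbf{ev}_{\NC}$.
\item It uses birationality of $\pi\colon X^{z_A}_{\NC}\to X^A_{\NC}$ (Theorem~\ref{thm:posfibers}) and push--pull to get $\int^T_{X^A_{\NC}}F_\alpha=\int^T_{X^{z_A}_{\NC}}F_\alpha$.
\item It quotes the Kronecker duality \cite[Theorem 12.12]{BGNST2} in $\hhmp_n$, which treats all $A$ uniformly, including the equal-dimension cases.
\end{enumerate}
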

\begin{proof}
In~\cite[Theorem 11.11]{BGNST2} we show that the map $\mathbf{ev}_{\NC}: H^\bullet_T(\hhmp_n) \cong \mathbb{Z}[\tl_n][\xl_n]/\langle f-f(0)\suchthat f\in \eqsym{n}\rangle \to \mathbb{Z}[\tl_n]^{\oplus \NC_n}$ given by $f(\xl_{n}; \tl_{n}) \mapsto (f_{w})_{w \in \NC_{n}}$ in an injection. 
Then by Theorem~\ref{thm:gmk_presentation} we have a commutative diagram
\begin{center}
    \begin{tikzcd}
        H^\bullet_T(X^{z_A}_{\NC})&
        \ar[l]H^\bullet_T(\hhmp_n)\ar{r}{\mathbf{ev}_{\NC}}& 
        \mathbb{Z}[\tl_n]^{\oplus \NC_n}\\
        H^\bullet_T(X^{A}_{\NC})\ar{u}{\pi^{\ast}}&
        \ar[l]H^\bullet_T(\QGr(r;n))\arrow{r}{\Psi} \ar{u}{\pi^{\ast}}&
        \mathbb{Z}[\tl_n]^{\oplus \binom{[n]}{r}} \ar[u]
    \end{tikzcd}
\end{center}
where the unlabeled horizontal arrows are pullbacks of inclusions and the unlabeled vertical arrow sends $(f_{A})_{A \in \binom{[n]}{r}}$ to $(f_{w \cdot [r]})_{w \in \NC_{n}}$.  

Now we claim that the middle vertical arrow sends $f(\xl_r;\tl_n)\in \eqsym{r}$ to $f(\xl_r;\tl_n) \in H^\bullet_T(\hhmp_n)$. 
By Lemma~\ref{lem:fiber_calculation}, $f_{w} = f_{z_{A}}$ for $A = w \cdot [r]$, so $\mathbf{ev}_{\NC}(f)$ is the image of $\Psi(f)$ under the right vertical arrow.
The right three maps are injective so this determines the middle arrow by commutativity. 

By Theorem~\ref{thm:posfibers} the map $\pi:X^{z_A}_{\NC}\to X^A_{\NC}$ is $T$-equivariant and birational.  
Thus by the push-pull formula and the commutativity of the left part of the diagram, we have
\[
\int^T_{X^{A}_{\NC}}F_\alpha(\xl_r;\tl_n)=\int^T_{X^{z_A}_{\NC}}F_\alpha(\xl_r;\tl_n),
\]
where the $F_\alpha(\xl_r;\tl_n)$ on the right is pulled back from $H^\bullet_T(\hhmp_n)$. By \cite[Theorem 12.12]{BGNST2} this is the indicator function for whether $z_A$ corresponds to $\alpha$ under the bijection $\qgrass{r,n}\to \Comp_{r,n}$.
\end{proof}

We conclude by considering the degree operations of other cohomology classes.  
Recall that $f \in \mathbb{Z}[t_1,\ldots,t_n]$ is \emph{Graham-positive} if $f\in \mathbb{N}[t_2-t_1,\ldots,t_n-t_{n-1}]$.\footnote{Each $t_{i+1}-t_i$ corresponds to a positive root $\chi_i/\chi_{i+1}$ under our convention that $t_i$ corresponds to $\chi_i^{-1}$.}  
The $T$-equivariant homology class of every $T$-invariant subvariety $Z \subseteq \Gr(r; n)$ can be written as 
\[
[Z] = \sum_{\eta \in \Part_{r, n}} a_{\eta}(\tl_{n})[X^\eta]
\qquad\text{with}\qquad
a_{\eta}(\tl_{n}) =\langle [Z], s_{\eta}(\xl_r; \tl_{n})\rangle^T_{\Gr(r;n)}, 
\]
where $s_{\eta}(\xl_r; \tl_{n})$ is the double Schur polynomial.
Graham showed \cite{Gra01} that for any $Z$, the $a_{\eta}(\tl_{n})$ above are Graham-positive.  
For $Z=X^\lambda_\mu$, the coefficients are the Littlewood--Richardson coefficients $c^\lambda_{\mu,\eta}(\tl_n)$ for double Schur polynomial multiplication~\cite{KT03}, which recover the usual Littlewood--Richardson after passing to singular cohomology.  

\begin{thm}
For any $T$-orbit closure $Z\subset \QGr(r;n)$, the Graham-positive quantity
\[
\int_{Z}^{T} s_\lambda(\xl_r;\tl_n)=\langle [Z],s_\lambda(\xl_r;\tl_n)\rangle^T_{\Gr(r;n)}\in \mathbb{N}[t_2-t_1,\ldots,t_n-t_{n-1}]
\]
can be computed with a manifestly positive combinatorial rule.  
\end{thm}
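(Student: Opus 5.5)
The plan is to reduce to Richardson varieties, whose equivariant Schur coefficients are known to have manifestly positive rules. By Corollary~\ref{cor:translatedRich}, every $T$-orbit closure $Z\subset\QGr(r;n)$ is a translated Richardson variety $Z=x^{-1}X^{\eta}_{\nu}$ for some $x\in S_n$ and $\nu\subset\eta$. For instance, for $Z=X^\lambda_{\NC}$, Theorem~\ref{thm:translatedRich} gives the explicit $x=w_\nu w$ and the ribbon $\eta/\nu$ of shape $\alpha$. Since $x^{-1}$ acts on $\Gr(r;n)$ by a permutation matrix normalizing $T$, the map $Z\mapsto x^{-1}Z$ is not $T$-equivariant but twisted-equivariant. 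It intertwines $H^\bullet_T(pt)$ with itself via the substitution $t_i\mapsto t_{x(i)}$ (or its inverse, depending on convention). Concretely, I would show
\[
\int^T_{x^{-1}X^\eta_\nu} s_\lambda(\xl_r;\tl_n)=\Bigl(\int^T_{X^\eta_\nu} s_\lambda(\xl_r;x\cdot\tl_n)\Bigr)\Big|_{\text{relabel}},
\]
where $s_\lambda(\xl_r;x\cdot\tl_n)$ denotes the double Schur polynomial with permuted equivariant variables. I would verify this via localization (Atiyah--Bott) or directly on GKM data: the fixed points, tangent weights and the class $s_\lambda$ restricted at $e_A$ transform compatibly under the permutation.

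The next step is to expand $s_\lambda(\xl_r;x\cdot\tl_n)$ in the basis $\{s_\mu(\xl_r;\tl_n)\}$ of $H^\bullet_T(\Gr(r;n))$. The coefficients are the restrictions and transition coefficients for double Schur polynomials under a permutation of $\tl$. These are iterated divided-difference type quantities, computed by repeatedly applying the simple-reflection rule
\[
s_i\, s_\lambda(\xl;\tl)=s_\lambda(\xl;\tl)+(t_{i+1}-t_i)\,(\text{a sum of } s_\mu),
\]
coming from the right action of $\partial_i$ on the $t$-variables. Then $\int^T_{X^\eta_\nu}s_\mu(\xl_r;\tl_n)=c^{\eta}_{\nu,\mu}(\tl_n)$, the equivariant Littlewood--Richardson coefficient, for which Knutson--Tao puzzles give a manifestly Graham-positive rule.

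The main obstacle is that the transition step introduces signs: individual terms $t_{i+1}-t_i$ may become negative after the relabeling by $x$. So the naive combination is not manifestly positive, although the total is Graham-positive by \cite{Gra01}. To get around this, I would instead use the specific form of $x$ for ribbons, $x=w_\nu w$ with $w\in S_r\times S_{n-r}$ reversing an initial block. I would try to show that the translate can be absorbed: $x^{-1}X^\eta_\nu$ has the same $T$-equivariant class as a Richardson variety up to a positive relabeling, or, better, is itself a Richardson in a conjugate Borel for which the positive roots appearing are still simple-root positive.

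Failing that, I would use the positroid description from Corollary~\ref{cor:positroid varieties}. Here the equivariant Schur expansion of a positroid variety is given by a manifestly positive rule: the Knutson--Lam--Speyer interval-positroid or Pechenik-type puzzle rule for toric positroids, or equivalently Graham-positive AJS--Billey localization sums over the noncrossing Le diagram $L$. The latter would be applied via the birational projection $\pi:X^{u}_{w_\lambda}\to Z$ and pushforward of the Richardson class in $\fl{n}$, where the structure constants are Graham-positive by a pipe-dream/bumpless-pipe-dream formula.
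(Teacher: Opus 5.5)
\textbf{There is a genuine gap: none of your routes actually ends in a manifestly positive rule.}

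The translated-Richardson route gives, as you concede, a signed expansion. Your proposed repair does not remove the signs. The variety $x^{-1}X^{\eta}_{\nu}$ is a Richardson variety for the conjugate Borels $x^{-1}Bx$ and $x^{-1}B^-x$. So Knutson--Tao puzzles expand it positively only in the $x$-permuted roots and in the Schubert basis of the conjugate flag. Converting back to the standard Schubert basis and the standard positive roots is exactly the signed step you wanted to avoid.

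The fallbacks are not arguments either:
\begin{itemize}
\item Writing $Z=\pi(X^{w_\kappa}_u)$, the quantity becomes the pairing of $[X^{w_\kappa}_u]$ with the double Schubert polynomial of $w_\lambda$, the pullback of $s_\lambda(\xl_r;\tl_n)$. That is an equivariant Schubert structure constant of $\fl{n}$. No manifestly positive rule is known for these in general, and pipe dreams and bumpless pipe dreams expand single Schubert polynomials, not products.
\item AJS--Billey gives positive localizations, but integrating over $Z$ by localization divides by tangent weights and is signed.
\item Knutson's puzzle rule for interval positroid varieties would first require proving that these orbit closures are interval positroids, which you do not do.
\item The other rules you name are not specified precisely enough to check.
\end{itemize}

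\textbf{The missing idea.} The paper lifts $Z$ birationally not to a Richardson variety in $\fl{n}$, but to a $T$-orbit closure in $\hhmp_n$, where positive rules already exist.
\begin{itemize}
\item Since $\QGr(r;n)=\bigsqcup_A\mathring{X}^A_{\NC}$, write $Z=\overline{T\cdot p}$ with $p\in\mathring{X}^A_{\NC}$.
\item By Theorem~\ref{thm:posfibers}, $\pi$ restricts to a $T$-equivariant isomorphism $\mathring{X}^{z_A}_{\NC}\to\mathring{X}^A_{\NC}$. Let $q\in\mathring{X}^{z_A}_{\NC}$ be the preimage of $p$.
\item Then $\pi$ maps $\overline{T\cdot q}\subset\hhmp_n$ birationally onto $Z$.
\item Push--pull gives $\int^T_Z s_\lambda(\xl_r;\tl_n)=\int^T_{\overline{T\cdot q}}s_\lambda(\xl_r;\tl_n)$.
\item Degrees of $s_\lambda(\xl_r;\tl_n)$ on $T$-orbit closures in $\hhmp_n$ have a combinatorially positive rule by \cite[Corollary 12.14]{BGNST2}.
\end{itemize}
No translation, change of Schubert basis, or structure constant of $\fl{n}$ ever enters.
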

\begin{proof}
First, we claim that $Z=\overline{T\cdot x}\subset \QGr(r;n)$ is the birational projection of a $T$-orbit closure $Z'=\overline{T\cdot y}\subset \hhmp_n$. Indeed, $\QGr(r;n)=\bigsqcup \mathring{X}^A_{\NC}$ (Theorem~\ref{thm:everythingequal}) and the projection $\pi$ restricts to an isomorphism $\mathring{X}^{z_A}_{\NC}\cong \mathring{X}^A_{\NC}$ (Theorem~\ref{thm:posfibers}), so if $x\in \mathring{X}^A_{\NC}$ we can take the preimage $y\in \mathring{X}^{z_A}_{\NC}$. The push-pull formula shows $\int_{Z}^Ts_\lambda(\xl_r;\tl_n)=\int_{Z'}^Ts_\lambda(\xl_r;\tl_n)$, and combinatorially positive rules for degrees of $s_\lambda(\xl_r;\tl_n)$ on $T$-orbit closures in $\hhmp_n$ were computed in \cite[Corollary 12.14]{BGNST2}.
\end{proof}

Applied to each $X^\lambda_{\NC}$, this gives that 
\[
\int^T_{X^\lambda_{\NC}}s_\eta(\xl_r;\tl_n)=\langle [X^\lambda_{\NC}],s_\lambda(\xl_r;\tl_n)\rangle^T_{\Gr(r;n)}=[F_\alpha(\xl_r;\tl_n)]s_\eta(\xl_r;\tl_n)
\]
is Graham-positive, a special case of the double Schubert into double forest Graham-positivity \cite{BGNST1}.

\section{Forests, Moment polytopes, and equidimensionality}
\label{sec:morefacts}

We now describe the moment polytope of each cell \(X^\lambda_{\NC}\) and prove that the moment complex of \(\QGr(r;n)\) is equidimensional. 
We do so by drawing on similar results for $\hhmp_{n}$  from~\cite{BGNST1, BGNST2}. 
 
\subsection{Forests and noncrossing partitions}

We first recall some essential combinatorics about indexed forests.  
A \emph{plane binary tree} is a tree \(T\) in which each node is either a leaf or an internal node \(v \in \internal{T}\) with one left child \(v_L\) and one right child \(v_R\). The size of \(T\) is \(|T| \coloneqq |\internal{T}|\). There is a unique tree \(\ast\) of size \(0\), consisting of a single node.

A \emph{plane binary indexed forest} is a sequence \(F = (T_1, T_2, \dots)\) of plane binary trees such that all but finitely many \(T_i\) are equal to \(\ast\). Let \(\indexedforests\) denote the set of all such forests, and let \(\indexedforests_n\) denote the subset consisting of those indexed forests whose nontrivial components have leaf labels in \([n]\). 
There is a bijection
\[
\ForToNC : \indexedforests_n \longrightarrow \NC_n
\]
constructed as follows: for each internal node \(v \in \internal{F}\) that is a right child, delete the edge joining \(v\) to its left child \(v_L\). Consider the resulting connected components. To each one assign a cycle written in decreasing order on its set of leaf labels.

For \(z \in \qgrass{r,n}\), the associated tree \(T \in \Zigzag_{r,n}\) is called a \emph{zigzag tree}. These trees are in bijection with compositions in a manner compatible with the bijection $z_{A} \mapsto \alpha_{A}$, see Figure~\ref{fig:zigzags_comps}.
We say that $T\in \Zigzag_{r,n}$ is \emph{maximal} if $|T|=n-1$.

\begin{figure}[!ht]
    \centering
    \includegraphics[scale=0.8]{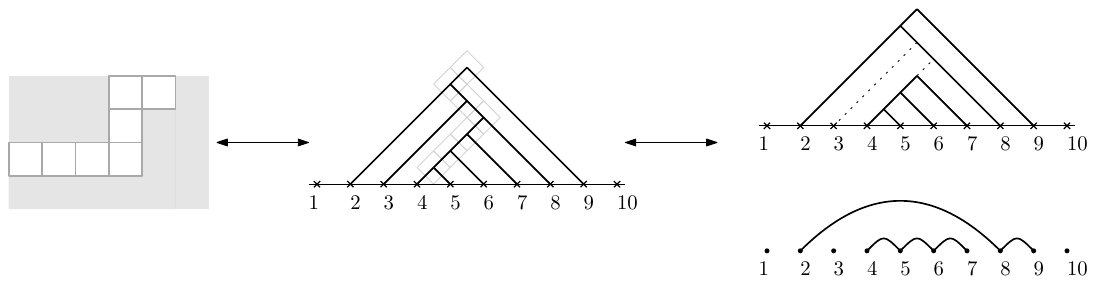}
    \caption{Mapping $(2,1,4)\in \Comp_{4,10}$ to a zigzag tree in $\Zigzag_{4,10}$ and then to the associated permutation in $\qgrass{4,10}$.}
    \label{fig:zigzags_comps}
\end{figure}

\subsection{Moment Polytopes}

We describe the moment polytopes of the $X^\lambda_{\NC}$ in terms of hyperplanes.

\begin{thm}
Let $A=(a_k,\dots,a_1\mid_r b_1,\dots,b_k)\in \binom{[n]}{r}$.
The moment polytope of $X^{\lambda_A}_{\NC}$ is the subset of $[0,1]^n\cap \{z_1+\cdots+z_n=r\}$ cut out by:
\begin{enumerate}[label=\normalfont(\roman*)]
    \item the equalities $z_i=1$ for $1\leq i<a_k$ and $z_i=0$ for $b_k<i\leq n$;
    \item $z_{a_i}+z_{a_i+1}+\cdots+z_{b_i}\leq r+1-a_i$ \quad for $1\leq i\leq k$;
    \item $z_q+z_{q+1}+\cdots+z_{b_{i-1}} \geq r-q$ \quad for $q\in [r]$ with $a_i\leq q<a_{i-1}$,
\end{enumerate}
where we set $a_0=r+1$ and $b_0=r$ by convention.
\end{thm}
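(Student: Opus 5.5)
The plan is to identify the moment polytope of the toric variety $X^{\lambda_A}_{\NC}$ with the convex hull of its $T$-fixed points. Then I would check that this hull is the polytope cut out by (i)–(iii).

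\textbf{Step 1: the vertex set.} By Corollary~\ref{cor:positroid varieties}, $X^{\lambda_A}_{\NC}$ is the positroid variety of the maximal noncrossing Le diagram $L=\invq{A}$. Its moment polytope is therefore the positroid polytope $P=\operatorname{conv}\{e_C \suchthat C\in\mathcal{A}\}$, where $\mathcal{A}$ is the set of fixed points. By the proof of the proposition on $\Gamma$-networks, $C\in\mathcal{A}$ exactly when $\Delta_C$ is not identically zero on the parametrization. By Lindström–Gessel–Viennot, this happens exactly when there is a family of vertex-disjoint paths in the $\Gamma$-network from $A\setminus C$ to $C\setminus A$. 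Because every path is unique and runs along a hook of $L$, I expect $\mathcal{A}$ to have a clean description. As a cross-check, Theorem~\ref{thm:rigidQGr} together with the proof of Theorem~\ref{thm:everythingequal} shows that the fixed points are the vertices $C\le A$ of $QJ_{r,n}$ that are reachable from $A$ inside the closure.

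\textbf{Step 2: each constraint holds on the vertices.} For (i), the Schubert condition $C\le A$ together with the fact that the union of blocks of $z_A$ is $[a_k,b_k]$ forces $C\supseteq[a_k-1]$ and $C\subseteq[b_k]$. For (ii), the Le diagram $L$ meets the rows of $b_1,\dots,b_k$ only in the split boxes $(a_p,b_p)$ and $(q,b_p)$ with $b_{p-1}<q<b_p$, by Lemma~\ref{le:invnc_za}. So the disjoint paths starting in $[a_i,b_i]$ can bring in at most one new element beyond those of $A\cap[a_i,b_i]$, and counting gives $|C\cap[a_i,b_i]|\le r+1-a_i$. Constraint (iii) is the dual count using the merge boxes. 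An element $q'\in[q,b_{i-1}]\cap A$ can only be replaced by something in $[q,b_{i-1}]$, except through the single merge arc $(a_i,\cdot)$, which gives the bound $\ge r-q$. Both counts are LGV path-counting arguments in the network, organized by the hook decomposition of Lemma~\ref{le:invnc_za}.

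\textbf{Step 3: no other constraints are needed.} This is the main obstacle, namely showing that the polytope $Q$ defined by (i)–(iii) has no lattice vertices outside $\mathcal{A}$ and that $P$ needs no further facets. I would proceed in three parts. First, the constraint matrix of (i)–(iii) together with $0\le z\le1$ and $\sum z=r$ consists of interval indicator vectors, so it is totally unimodular and $Q$ is a lattice polytope with $0/1$ vertices. Second, I would show directly that any $C$ satisfying (i)–(iii) admits the required disjoint path family, by induction on $k$, peeling off the outermost block $B_k$. Third, I would compare dimensions: $\dim P=\OHL(\lambda_A)=b_k-a_k$ by Theorem~\ref{thm:everythingequal}, and (i) cuts $Q$ down to the same dimension, so no extra equalities are missing. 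If the induction in the second part becomes messy, the fallback is to use Theorem~\ref{thm:translatedRich}. There $P$ is a permuted Richardson (ribbon skew) polytope, whose inequalities are of known interval type, and one checks that $x^{-1}$ transforms them into (ii) and (iii).
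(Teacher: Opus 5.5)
Your architecture is viable and differs from the paper's. The paper simply specializes the polypositroid inequality description of the moment polytope of $X^{z_A}_{\NC}\subset\fl{n}$ from \cite[Theorem 7.6]{BGNST2} to the weight $(1^r,0^{n-r})$, implicitly using that $\pi$ maps $X^{z_A}_{\NC}$ onto $X^{A}_{\NC}$. Your route is: compute the vertex set, show $P\subseteq Q$, get integrality of $Q$ from total unimodularity of the interval system, then show every lattice point of $Q$ lies in $\mathcal{A}$. That would give a self-contained proof, but two steps do not hold up as written.

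In Step 2(ii), row $b_p$ of $L$ is not made only of split boxes: it also contains the merge box $(a_{p+1},b_p)$, e.g.\ $(2,7)$ for $A=\{1,3,7,9\}$. More importantly, your count is off by one. Already $|A\cap[a_i,b_i]|=(r-a_i+1-i)+i=r+1-a_i$, so ``at most one new element beyond $A\cap[a_i,b_i]$'' only yields $r+2-a_i$. The correct statement is that nothing enters $[a_i,b_i]$ at all. Arcs point leftward, so inflow could only come from a source $b_p$ with $p>i$. Its targets $a_p$, $a_{p+1}$ and the elements of $(b_{p-1},b_p)$ all lie outside $[a_i,b_i]$.

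The genuine gap is Step 3. The inclusion ``every $C$ satisfying (i)--(iii) lies in $\mathcal{A}$'' is the whole content of the theorem, and you only announce an induction for it. The dimension comparison cannot replace it: equal dimension does not rule out extra vertices of $Q$, and once the lattice-point inclusion is known, $Q=P$ follows with no dimension count. The Richardson fallback is likewise unverified.

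Here is one way to close the gap.
\begin{enumerate}
\item In the canonical $n\times r$ representative of $\mathring{X}^A_{\NC}$, the row of each $a_s\in A$ is the unit vector $e_s$. Hence $\Delta_C=\pm\det(M_{x,s})$ with $x\in C\setminus A$ and $a_s\in A\setminus C$, and the nonzero pattern of this matrix is a subgraph of the arcs of $\invq{A}$.
\item These arcs form a tree on $[a_k,b_k]$: a caterpillar whose spine is the path $a_1,b_1,a_2,b_2,\dots,a_k,b_k$, with the elements of $(b_{p-1},b_p)$ as leaves on $b_p$ and those of $(a_p,a_{p-1})$ as leaves on $a_p$.
\item A forest has at most one perfect matching, so this determinant is $0$ or $\pm$ a single monomial. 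Thus $C\in\mathcal{A}$ exactly when $(A\setminus C)\sqcup(C\setminus A)$ is the vertex set of a matching in this tree. This also removes any need for LGV or sign bookkeeping.
\item Let $X_i$ and $Y_i$ be the net changes $|C\cap I|-|A\cap I|$ over $I=[a_i,b_i]$ and $I=[a_i,b_{i-1}]$, with $X_0=X_k=0$. Then (ii) and the $q=a_i$ cases of (iii) say $X_i\le 0\le Y_i$.
\item The change across $[a_p,a_{p-1})$ is at most $+1$ and across $(b_{p-1},b_p]$ at least $-1$. This forces $Y_i\in\{0,1\}$ and $X_i\in\{-1,0\}$.
\item Consequently each spine vertex has at most one of its leaves in the symmetric difference, and only when the spine vertex itself lies there. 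The remaining spine vertices of the symmetric difference then pair off consecutively along the spine, which is exactly a matching.
\end{enumerate}
Combined with your total unimodularity observation, this gives $Q=P$.
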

\begin{proof}
    This follows by specializing \cite[Theorem 7.6]{BGNST2} which gives an explicit description of the moment polytope of $X^u_{\NC}\subset \fl{n}$ for $u\in \NC_n$ as a polypositroid \cite{LP20}. 
    More precisely we set the regular dominant weight in the description in \cite[Theorem 7.6]{BGNST2} to the vector $(1^r,0^{n-r})$.
\end{proof}

We can also give an explicit vertex description for the moment polytopes once we set up some more notation. 
It will be convenient for us to use compositions as our indexing objects.
Let $\alpha\in \Comp_{r,n}$. 
As in Theorem~\ref{thm:parttocomp},
parse $\alpha$ uniquely as $d_11^{e_1-1}(d_2+1)1^{e_2-1}\cdots (d_k+1)1^{e_k-1}$ subject to the constraints $d_i,e_i\geq 1$ for $1\leq i\leq k$.
This datum determines a decomposition of the interval $\{r-\ell(\alpha)+1,\dots,r-\ell(\alpha)+1+|\alpha|\}$ into disjoint intervals from left to right:
\begin{align}\label{eq:e_d_decomposition}
   \{r-\ell(\alpha)+1,\dots,r-\ell(\alpha)+1+|\alpha|\} =\left(E_1\sqcup \cdots \sqcup E_k\right) \sqcup \left(D_k\sqcup \cdots \sqcup D_1\right),
\end{align}
where $|E_i|=e_i, |D_i|=d_i$ for $1\leq i\leq k$, so that \[
\{r-\ell(\alpha)+1,\dots,r\}=E_1\sqcup \cdots E_k \text{ and } \{r+1,\dots,r+1-\ell(\alpha)+|\alpha|\}=D_k\sqcup \cdots \sqcup D_1.\]


\begin{defn}\label{def:admissible}
    Fix $\alpha\in \Comp_{r,n}$.
    Let $A=(a_{m},\dots,a_1\mid_r b_1,\dots,b_{m}) \in \binom{[n]}{r}$ where $r-\ell(\alpha)+1\leq a_{m}$ and $b_m\leq r-\ell(\alpha)+1+|\alpha|$. 
    Then the data $E_{1}, \ldots, E_{k}, D_{k}, \ldots, D_{1}$ as in~\eqref{eq:e_d_decomposition} determines unique indices $(i_1,j_1)$, $(i_2,j_2),\dots,(i_{m},j_{m})$ such that $a_p\in E_{i_p}$ and $b_p\in D_{j_p}$ for all $1\leq p\leq m$.
    We say that $A$ is \emph{$\alpha$-admissible} if:
    \[
        j_1\leq i_1 < j_2\leq i_2 < \cdots < j_{m} \leq i_{m}.
    \]
\end{defn}
Figure~\ref{fig:admissible_inadmissible} gives an example of an $\alpha$-admissible set $A$ and an $\alpha$-inadmissible set $B$ for $\alpha=(2,1,4)\in \Comp_{4,10}$.
For $B=(3,4\mid_4 5,6)$ we have, in terms of the notation in Definition~\ref{def:admissible}, that $(i_1,j_1)=(1,2)$ and $(i_2,j_2)=(2,2)$. 
Since $j_1>i_1$ we have $B$ is $\alpha$-inadmissible.

\begin{figure}[!ht]
    \centering
    \includegraphics[scale=0.8]{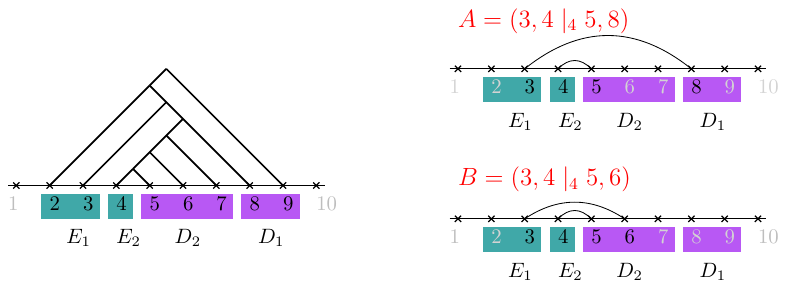}
    \caption{For $\alpha=(2,1,4)\in \Comp_{4,10}$ we have $\alpha$-admissible $A$ and $\alpha$-inadmissible $B$ . The arcs connect $i/j$-indices determined as in Definition~\ref{def:admissible}.}
    \label{fig:admissible_inadmissible}
\end{figure}
\begin{thm}
    Let $\alpha\in \Comp_{r,n}$.
    The $T$-fixed points of $X_{\NC}^{\lambda_{\alpha}}$ are
    \[
       (X^{\lambda_{\alpha}}_{\NC})^{T}=\left\{A\in {\textstyle \binom{[n]}{r}}\;\middle|\; A \text{ is  $\alpha$-admissible}\right\}.
    \]
\end{thm}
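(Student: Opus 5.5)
The plan is to identify the $T$-fixed points of $X^{\lambda_\alpha}_{\NC}$ with the vertices of its moment polytope, and to read the vertex set off the hyperplane description in the preceding theorem. Since $X^{\lambda_\alpha}_{\NC}=X^{A_\alpha}_{\NC}$ is a $T$-orbit closure (it is the closure of the torus orbit of a generic point of $\mathring{X}^{A_\alpha}_{\NC}$, all of whose $\ast$ entries are nonzero), its $T$-fixed points are exactly the $e_A$ with $\Delta_A$ not identically zero on $X^{A_\alpha}_{\NC}$. Equivalently, they are the vertices $\mathbf{1}_A$ of the moment polytope. So the statement reduces to a purely combinatorial claim: for $A_\alpha=(a_k,\dots,a_1\mid_r b_1,\dots,b_k)$, a $0/1$ vector $\mathbf{1}_B$ satisfies (i)--(iii) if and only if $B$ is $\alpha$-admissible.

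The first step is to translate the data $E_i,D_i$ of~\eqref{eq:e_d_decomposition} into the entries of $A_\alpha$. I will use Theorem~\ref{thm:parttocomp} together with the Frobenius symbol~\eqref{eqn:FrobSymbol}. The aim is to check that the interval in~\eqref{eq:e_d_decomposition} is exactly $[a_k,b_k]$ (this matches $|\alpha|=b_k-a_k$). Concretely, I expect $E_i=[a_{k+1-i},a_{k-i})$ with $a_0=r+1$, and $D_i=(b_{k-i},b_{k+1-i}]$ with $b_0=r$, after matching indices.

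With this dictionary in hand, condition (i) says precisely that $B\setminus[a_k,b_k]=[1,a_k)$, which is the support hypothesis in Definition~\ref{def:admissible}. Condition (ii) is an upper bound on $|B\cap[a_i,b_i]|$, equivalently a statement that enough elements of $[r]$ below $a_i$ are missing from $B$ compared with the elements above $r$ that $B$ adds. Condition (iii) is a lower bound on $|B\cap[q,b_{i-1}]|$. Writing $B=(a'_m,\dots,a'_1\mid_r b'_1,\dots,b'_m)$, both families become counting inequalities comparing $\#\{p: a'_p\ge q\}$ with $\#\{p: b'_p\le b_{i-1}\}$, and similar quantities, block by block. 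The task is then to show these inequalities are equivalent to the interlacing $j_1\le i_1<j_2\le i_2<\cdots$. I expect the non-strict inequalities $j_p\le i_p$ to come from (ii), and the strict inequalities $i_p<j_{p+1}$ to come from (iii). I would prove both directions by induction on $m$, peeling off the outermost pair $(a'_m,b'_m)$.

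A safer alternative, which I would use to cross-check, avoids polytopes. The fixed points of $X^{z_A}_{\NC}\subset\hhmp_n$ are known from \cite{BGNST2}: they are the $w\in\NC_n$ below $z_A$ in the relevant order, namely the noncrossing partitions refining $z_A$ compatibly with its forest. Since $\pi$ is birational, proper and $T$-equivariant (Theorem~\ref{thm:posfibers}), we have $(X^{A}_{\NC})^T=\{w\cdot[r]\}$ over those $w$. Lemma~\ref{lem:DescribeNCToParts} computes $w\cdot[r]$ from the blocks extending over $r$. Each such block $B_p$ has $\max(B_p\cap[r])=a'_p$ in some $E_{i_p}$ and $\max B_p=b'_p$ in some $D_{j_p}$, and the nesting of blocks in the zigzag structure should force exactly the interlacing condition.

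I expect the main obstacle to be the bookkeeping in the equivalence step, especially showing that every interlacing $B$ is actually realized. In the polytope approach this amounts to showing the interlacing inequalities imply all of (ii) and (iii). In the forest approach it amounts to constructing, for each admissible $B$, a refinement $w$ of $z_A$ with $w\cdot[r]=B$. I would try the construction first: join $a'_p$ with $b'_p$ inside the merged block structure dictated by the $E_{i}$ and $D_{j}$ intervals, and leave everything else as singletons.
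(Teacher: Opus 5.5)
Your proposal is correct. Your primary route differs from the paper's, while your ``safer alternative'' is essentially what the paper does.

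\textbf{The paper's route.} The paper takes the fixed points of $X^{z_A}_{\NC}\subset\hhmp_n$ from \cite[Theorem 6.3]{BGNST2}. These are the subproducts of the transpositions $\tau_v$ attached to internal nodes of the zigzag tree, equivalently obtained by deleting some left edges and taking backward cycles on the components. It sends each to $w\cdot[r]$ via Lemma~\ref{lem:DescribeNCToParts}, and the nesting of the resulting blocks yields the interlacing.

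\textbf{Your route.} You instead use the inequality description of the moment polytope from the preceding theorem, and the argument goes through exactly as you predict.
\begin{itemize}
\item Your dictionary $E_{k+1-p}=[a_p,a_{p-1})$, $D_{k+1-p}=(b_{p-1},b_p]$ is right. You can read it directly off $\alpha_A=(|B_{a_k}|-1,|B_{a_k+1}|,\dots,|B_r|)$, since the blocks of $z_A$ extending over $r$ are $\{a_p\}\cup(b_{p-1},b_p]$. This gives $d_p=b_{k+1-p}-b_{k-p}$ and $e_p=a_{k-p}-a_{k+1-p}$, and it sidesteps a typo in the second Frobenius row of Theorem~\ref{thm:parttocomp}.
\item For $B$, inequality (ii) at index $p$ reads $\#\{b'\in B\setminus[r] : b'\le b_p\}\le \#\{a'\in[r]\setminus B : a'\ge a_p\}$. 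Note that it is the missing elements in $[a_p,r]$ that matter, not those below $a_p$.
\item Inequality (iii) is strongest at $q=a_p$, where it reads $\#\{a'\ge a_p\}\le 1+\#\{b'\le b_{p-1}\}$.
\item Both counts enumerate initial segments of the pairs ordered from the inside out. Hence the first family is equivalent to the weak inequalities $j\le i$ within each pair, and the second to the strict ones between consecutive pairs. No induction on $m$ is needed.
\item The realizability step you flag as the main obstacle disappears in this route. A $0/1$ vector lying in a polytope contained in $[0,1]^n$ is automatically a vertex, and the vertices of this moment polytope are exactly the $\mathbf{1}_B$ for $T$-fixed points $B$.
\end{itemize}

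\textbf{Comparison.} Your route is elementary and self-contained modulo the polytope theorem, and it yields an intrinsic inequality criterion. The paper's route does not need the polytope theorem, though both ultimately rest on \cite{BGNST2}. In exchange it exhibits, over each admissible $B$, an explicit fixed point of $X^{z_A}_{\NC}$, and it explains admissibility as block nesting.

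\textbf{A caution for the write-up.} Definition~\ref{def:admissible} as literally written, with $a_1$ the largest element of $[r]\setminus B$, forces $i_1\ge i_2\ge\cdots$. That makes the chain $j_1\le i_1<j_2\le i_2$ impossible for $m\ge 2$. The paper's own example indexes pairs from the outside in: the outer pair $(3,6)$ of $B=(3,4\mid_4 5,6)$ gets $(i_1,j_1)=(1,2)$. In your notation $B=(a'_m,\dots,a'_1\mid_r b'_1,\dots,b'_m)$, the condition to prove is therefore $j_m\le i_m<j_{m-1}\le i_{m-1}<\cdots<j_1\le i_1$. As written, your plan to peel off $(a'_m,b'_m)$ against the chain $j_1\le i_1<\cdots$ would be aiming at a contradictory target.
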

\begin{proof}
    We outline the argument, referring the reader to~\cite{BGNST1} for any errant terminology about indexed forests. 
    Let $Z\in \Zigzag_{r,n}$ correspond to $\alpha$.
    For $v\in \internal{Z}$, let $\tau_v$ be the transposition $(i\,j)$ where $i$ is the rightmost leaf descended from $v_L$ and $j$ is the rightmost leaf descended from $v$. 
    Order the internal nodes $v_1,v_2,\ldots \in \internal{F}$ in the unique order from root to terminal node. Then~\cite[Theorem 6.3]{BGNST2}
    implies that the set of $T$-fixed points of $X_{\NC}^{\lambda_{\alpha}}$ is given by the following subset of $\binom{[n]}{r}$:  
    \begin{equation}
    \label{eq:vertices}\{\tau_{v_{i_1}}\tau_{v_{i_2}}\cdots \tau_{v_{i_k}}\cdot [r]\suchthat 1\le i_1 < i_2<\cdots < i_k \le n-1\}.
    \end{equation}
    Concretely, the products $\tau_{v_{i_1}}\tau_{v_{i_2}}\cdots \tau_{v_{i_k}}$ correspond to deleting all left edges emanating from internal nodes in $\internal{Z}\setminus \{v_{i_1},\dots,v_{i_k}\}$, and then multiplying the backward cycles on the leaves for each connected component. This perspective, in conjunction with Lemma~\ref{lem:DescribeNCToParts}, implies the claim. 
\end{proof}

For $\alpha=(2,1)\in \Comp_{2,4}$ we have five $T$-fixed points corresponding to the elements in $\binom{[4]}{2}$ except $\{3,4\}$ which is $\alpha$-inadmissible. 
These in turn give the vertices of the blue pyramid in Figure~\ref{fig:placeholder}.
Figure~\ref{fig:all_relevant_subproducts_zigzag} show how these admissible sets arise following the left edge deletion procedure described in the preceding proof.

\begin{figure}[!ht]
    \centering
    \includegraphics[scale=1]{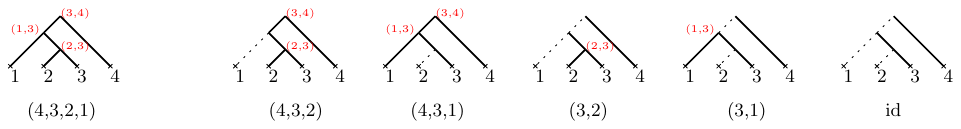}
    \caption{$Z\in \Zigzag_{2,4}$ for  $\alpha=(2,1)$ and the subproducts giving distinct $T$-fixed points}
    \label{fig:all_relevant_subproducts_zigzag}
\end{figure}




\begin{cor}
$\QGr(r;n)$ is an equidimensional complex of $(n-1)$-dimensional toric varieties.
\end{cor}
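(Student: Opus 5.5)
By Theorem~\ref{thm:everythingequal}, $\QGr(r;n)=\bigcup_\lambda X^\lambda_{\NC}$, and each $X^\lambda_{\NC}$ is a toric variety of dimension $\OHL(\lambda)$: it is the closure of the affine cell $\mathring{X}^\lambda_{\NC}\cong\mathbb{A}^{\OHL(\lambda)}$, whose coordinate characters $\chi_i/\chi_j$, $(i,j)\in\invq{A}$, are linearly independent. So the claim reduces to two statements:
\begin{enumerate}
\item[(a)] every $X^\lambda_{\NC}$ with $\OHL(\lambda)<n-1$ lies in some $X^\mu_{\NC}$ with $\OHL(\mu)=n-1$;
\item[(b)] there are exactly $\binom{n-2}{r-1}$ such maximal $\mu$, and no two of them are contained in one another.
\end{enumerate}
Statement (b) is immediate. $\OHL(\mu)=\mu_1+\ell(\mu)-1\le (n-r)+r-1$, with equality exactly when $\mu_1=n-r$ and $\ell(\mu)=r$. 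Removing the full first row and first column leaves an arbitrary partition in an $(r-1)\times(n-r-1)$ box, which gives the count. Distinct top-dimensional cells are distinct irreducible components of the same dimension, so neither contains the other.

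For (a) I would work with compositions and zigzag trees. By Theorem~\ref{thm:parttocomp}, $\OHL(\lambda_\alpha)=|\alpha|$, so the maximal cells are indexed by $\beta\in\Comp_{r,n}$ with $|\beta|=n-1$, equivalently by maximal $Z\in\Zigzag_{r,n}$. Given $\alpha$ with $|\alpha|<n-1$, I would build $\beta\in\Comp_{r,n}$ with $|\beta|=n-1$ by lengthening the ribbon of $\alpha$ until it fills the $r\times(n-r)$ box, adding boxes at its two ends. On the tree side this grafts internal nodes onto the zigzag tree $Z_\alpha$ so that $Z_\alpha$ sits as a ``subforest'' of a maximal zigzag tree $Z_\beta$. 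The target inclusion is $X^{\lambda_\alpha}_{\NC}\subseteq X^{\lambda_\beta}_{\NC}$.

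To prove the inclusion I would use the rigidity Theorem~\ref{thm:rigidQGr}. Since $X^{\lambda_\beta}_{\NC}$ is a $T$-orbit closure in $\QGr(r;n)$ and $X^{\lambda_\alpha}_{\NC}$ is $T$-invariant, it suffices to show
\[
(X^{\lambda_\alpha}_{\NC})^T\subseteq (X^{\lambda_\beta}_{\NC})^T.
\]
By the preceding theorem, this is the combinatorial statement that every $\alpha$-admissible set is $\beta$-admissible.

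Alternatively one can argue via the vertex description~\eqref{eq:vertices}. If the internal nodes of $Z_\alpha$ embed order-compatibly among those of $Z_\beta$, with the same transpositions $\tau_v$, then every subproduct for $Z_\alpha$ is also a subproduct for $Z_\beta$. This is cleaner and likely matches an analogous statement in \cite{BGNST2} for $\hhmp_n$, namely that every $X^u_{\NC}$ lies in some top-dimensional $X^w_{\NC}$. If that statement is available, I would instead project it down: write $X^{\lambda}_{\NC}=\pi(X^{z_A}_{\NC})\subseteq\pi(X^w_{\NC})$ with $w$ a Coxeter-type element, and observe that $\pi(X^w_{\NC})$ is an $(n-1)$-dimensional $T$-orbit closure in $\QGr(r;n)$.

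\textbf{Main obstacle.} The hard step is the explicit extension $\alpha\mapsto\beta$ and the check that admissibility, or the transposition labels $\tau_v$, are preserved. Appending leaves shifts the interval in~\eqref{eq:e_d_decomposition}, so the blocks $E_i,D_i$ change. I would first extend one box at a time at either end of the ribbon, which is the same as adding a single node at the top or bottom of the zigzag, and verify preservation in those two elementary cases; (a) then follows by iterating.
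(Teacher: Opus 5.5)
Your reduction to (a) and (b) is sound, and your main route is the paper's. Rigidity (Theorem~\ref{thm:rigidQGr}) reduces containment of cycles to containment of fixed-point sets. The vertex description \eqref{eq:vertices} then gives that containment once $Z_\alpha$ sits inside a maximal $Z_\beta$, with its nodes forming a terminal segment of the root-to-terminal order and with unchanged $\tau_v$.

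\textbf{The gap.} The one step with real content, producing $\beta$, is left open, and the recipe you propose for it is half wrong. Extending the ribbon at its $\alpha_\ell$-end means grafting a node at the bottom of the zigzag. That relabels leaves and changes the $\tau_v$ of the old nodes, and the containment can genuinely fail. Take $\QGr(2;4)$ with $\alpha=(1,1)$, i.e.\ $A=\{2,3\}$, so that $X^{23}_{\NC}=X^{23}\ni\langle e_{23}\rangle$. Take $\beta=(1,2)$, obtained by $\alpha_\ell\mapsto\alpha_\ell+1$, i.e.\ $\{3,4\}$. Since $\QGr(2;4)=\{\Delta_{23}\Delta_{34}=0\}$ and $\Delta_{34}\neq 0$ on $\mathring{X}^{34}_{\NC}$, $\Delta_{23}$ vanishes on the closure $X^{34}_{\NC}$. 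Hence $X^{23}_{\NC}\not\subseteq X^{34}_{\NC}$, and one of your ``two elementary cases'' is false.

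\textbf{The fix.} This is what the paper does: grow only at the root.
\begin{itemize}
\item A new root whose right child is the old tree (new leaf $a_k-1$) prepends a part $1$ to $\alpha$.
\item A new root whose left child is the old tree (new leaf $b_k+1$) replaces $\alpha_1$ by $\alpha_1+1$.
\end{itemize}
In both cases every old node keeps its subtree and leaf labels, hence its $\tau_v$. The old nodes are also the last ones in root-to-terminal order. So every subproduct in \eqref{eq:vertices} for $Z_\alpha$ is one for $Z_\beta$, and no admissibility bookkeeping is needed. Doing $r-\ell(\alpha)$ prepends, then increments up to size $n-1$, yields the paper's $\beta=(a,1^{b},\alpha_1,\ldots,\alpha_k)$ with $b=r-k-1$ and $a=n-1-b-|\alpha|$. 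When $k=r$, read this as $\alpha_1\mapsto\alpha_1+n-1-|\alpha|$.

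\textbf{Your alternative.} Projecting down from $\hhmp_n$ does not work as stated, because a top-dimensional $X^w_{\NC}$ need not project to an $(n-1)$-dimensional variety. For example, $w=(n\,n-1\,\cdots\,1)$ has $w\cdot[r]=\{1,\ldots,r-1,n\}$. So $\pi(X^w_{\NC})$ lies in a cycle of dimension $n-r$. Choosing $w=z_B$ with $B$ maximal is just the original problem again.
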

\begin{proof}
Let $T_{A}$ and $T_{B}$ be zigzag trees associated to sets $A, B \in \binom{[n]}{r}$. By \eqref{eq:vertices}, if $T_B$ is obtained by iteratively deleting the root vertex of $T_A$ (i.e. it is a lower ideal of the tree $T_A$), then $(X^B_{\NC})^T\subset (X^A_{\NC})^T$, which then implies $X^B_{\NC}\subset X^A_{\NC}$ by Theorem~\ref{thm:rigidQGr}. As $\dim X^{A}_{\NC}=|T_A|$, equidimensionality follows because we can extend any zigzag tree to a maximal one, possibly in many ways.  If $B$ corresponds to the composition $\alpha=(\alpha_1,\ldots,\alpha_k)$ then one uniform choice is to have $A$ correspond to $(a,1^b,\alpha_1,\ldots,\alpha_k)$ with $b=r-k-1$ and $a=n-1-b-|\alpha|$. 
\end{proof}

\bibliographystyle{hplain}
\bibliography{main.bib}
\end{document}